\newtheorem{Theorem}{Theorem}[section]
\newtheorem{Proposition}[Theorem]{Proposition}
\newtheorem{Lemma}[Theorem]{Lemma}
\newtheorem{Assumption}[Theorem]{Assumption}
\theoremstyle{definition}
\newtheorem{Definition}[Theorem]{Definition}
\newtheorem{Remark}[Theorem]{Remark}
\newcommand{\N}{\mathbb{N}}
\newcommand{\R}{\mathbb{R}}
\newcommand{\C}{\mathbb{C}}
\newcommand{\SP}{\mathbb{S}}
\newcommand{\NN}{\mathcal{N}}
\newcommand{\E}{\mathbb{E}}
\newcommand{\abs}[1]{\left| #1 \right|}
\newcommand{\dx}[1][x]{\,\mathrm{d}#1}
\newcommand{\tT}{\mathrm{T}}
\newcommand{\e}{\mathrm{e}}
\newcommand{\prox}{\mathrm{prox}}
\newcommand{\dom}{\mathrm{dom}\,}
\newcommand{\dist}{\,\mathrm{dist}}
\newcommand{\St}{\,\mathrm{St}}
\def\tT{{\mbox{\tiny{T}}}}
\newcommand{\ubar}[1]{\underaccent{\bar}{#1}}
\renewcommand\theta{\vartheta}
\DeclareMathOperator*{\argmin}{argmin}
\DeclareMathOperator{\Cov}{Cov}
\DeclareMathOperator{\SPD}{SPD}
\DeclareMathOperator{\Sym}{Sym}
\begin{document}
\title{Inertial Stochastic PALM (iSPALM) and Applications in Machine Learning}
\author{
Johannes Hertrich\footnotemark[1]
\and
Gabriele Steidl\footnotemark[1]
}

\maketitle

\footnotetext[1]{Institute of Mathematics,
TU Berlin,
Straße des 17. Juni 136, 
 D-10623 Berlin, Germany,
\{j.hertrich, steidl\}@math.tu-berlin.de.
} 

\begin{abstract}
Inertial algorithms for minimizing nonsmooth and nonconvex functions
as the inertial proximal alternating linearized minimization algorithm (iPALM)
have demonstrated their superiority with respect to computation time over their non inertial variants.
In many  problems in imaging and machine learning, 
the objective functions have a special form involving huge data which encourage the
application of stochastic algorithms. While algorithms based on stochastic gradient descent are still used in the
majority of applications, recently also stochastic algorithms for minimizing nonsmooth and nonconvex functions
were proposed.

In this paper, we derive an inertial variant of a stochastic PALM algorithm 
with variance-reduced gradient estimator, called iSPALM, 
and prove linear convergence of the algorithm under certain assumptions.
Our inertial approach can be seen as generalization of momentum methods widely used to speed up and stabilize optimization algorithms, in particular in machine learning,
to nonsmooth problems. 
Numerical experiments for learning the weights of a so-called
proximal neural network and the parameters of Student-$t$ mixture models 
show that our new algorithm outperforms both 
stochastic PALM and 
its deterministic counterparts.
\end{abstract}

\section{Introduction}

Recently, duality concepts were successfully applied for minimizing
nonsmooth and nonconvex functions appearing in certain applications in image and data processing.
A frequently applied algorithm in this direction is the proximal alternating linearized minimization algorithm (PALM)
by Bolte, Teboulle and Sabach \cite{BST2014} based on results in \cite{AB2009,ABRS2010}.
Pock and Sabach \cite{PS2016} realized that the convergence speed of PALM can be considerably improved
by inserting some nonexpensive inertial steps and called the accelerated algorithm iPALM.
In many  problems in imaging and machine learning, 
parts of the objective function
can be often written as sum of a huge number of functions sharing the same structure.
In general the computation of the gradient of these parts is too time and storage consuming
so that stochastic gradient approximations were applied, see, e.g. \cite{Bottou2010} and the references therein. 
A combination of the simple stochastic gradient descent (SGD) estimator 
with PALM was first discussed by  Xu and Yin in \cite{XY2015}. 
The authors refer to their method as
block stochastic gradient iteration and do not mention the connection to PALM.
Under rather hard assumptions on the objective function $F$, 
they proved that the sequence $(x^k)_k$ produced by their algorithm is such that
$\E\left(\dist(0,\partial F(x^k) \right)$ converges to zero as $k \to \infty$.
Another idea for a stochastic variant of PALM was proposed by Davis et al. \cite{DEU2016}.
The authors introduce an asynchronous variant of PALM with stochastic noise in the gradient and called it SAPALM.
Assuming an explicit bound of the variance of the noise, they proved certain convergence results.
Their approach requires an explicit bound on the noise, which is not fulfilled for the gradient estimators
considered in this paper.
Further, we like to mention that a stochastic variant of the primal-dual algorithm of Chambolle and Pock \cite{CP11}
for solving convex problems was developed in \cite{CERS2018}.

Replacing the simple stochastic gradient descent estimators by more sophisticated so-called variance-reduced
gradient estimators, Driggs et al. \cite{DTLDS2020} 
could weaken the assumptions on the objective function in \cite{XY2015}
and improve the estimates on the convergence rate of a stochastic PALM algorithm.
They called the corresponding algorithm SPRING.
Note that the advantages of variance reduction to accelerate stochastic gradient methods 
were discussed by several authors, see, e.g. \cite{JZ2013,RHSPS2016}.

In this paper, we merge a stochastic PALM algorithm 
with an inertial procedure 
to obtain a new iSPALM algorithm.
The inertial parameters can also be viewed as a generalization of momentum parameters 
to nonsmooth problems. 
Momentum parameters are widely used to speed up and stabilize optimization algorithms based on (stochastic)
gradient descent.
In particular, for machine learning applications it is known that momentum algorithms 
~\cite{N1983,P1964,Q1999,RHW1986} as well as their stochastic modifications like the Adam optimizer \cite{KB2014} 
perform much better than a plain (stochastic) gradient descent, see e.g. \cite{GLZX2019, SMDH2013}.
From this point of view, inertial or momentum parameters are one of the core ingredients for an efficient optimization
algorithm to minimize the loss in data driven approaches.
We examine the convergence behavior of iSPALM both theoretically and numerically.
Under certain assumptions on the parameters of the algorithm which also appear in the iPALM algorithm, we show
that iSPALM converges linearly. In particular, we have to adapt the definition of 
variance-reduced gradient estimators to the sequence produced by iSPALM.
More precisely, we have to introduce inertial variance-reduced gradient estimators.
In the numerical part, we focus on two examples, namely (i) MNIST classification with 
proximal neural networks (PNNs), and (ii) parameter learning for Student-$t$ mixture models (MMs).

PNNs basically replace the standard layer $\sigma(Tx+b)$ of a feed-forward neural network by 
$T^\tT\sigma(Tx+b)$ and require that $T$ is an element of the (compact) Stiefel manifold, 
i.e.\ has orthonormal columns, see \cite{HHNPSS2020,HNS2020}.
This implies that PNNs are $1$-Lipschitz 
and hence more stable under adversarial attacks than a neural network of comparable size without the
orthogonality constraints.
While the PNNs were trained in \cite{HHNPSS2020} using a SGD on the Stiefel manifold, 
we train it in this paper by adding the characteristic function of the feasible weights to the loss for incorporating the orthogonality constraints and use PALM, iPALM, SPRING and iSPALM for the optimization.

Learned MMs provide a powerful tool in data and image processing.
While Gaussian MMs are mostly used in the field, more robust methods
can be achieved by using heavier tailed distributions, as, e.g.\ the Student-$t$ distribution.
In~\cite{VS14}, it was shown that Student-$t$ MMs are superior 
to Gaussian ones for mode\-ling image patches and the authors proposed an application in image compression. 
Image denoising based on Student-$t$ models was addressed in~\cite{LS2019} and image deblurring in~\cite{DHWMZ2019,YYG2018}.
Further applications include robust image segmentation~\cite{BM18,NW12,SNG07} and superresolution \cite{Hertrich2020}
as well as registration~\cite{GNL09,ZZDZC14}. For learning MMs a maximizer of the corresponding log-likelihood
has to be computed. Usually an expectation maximization (EM) algorithm \cite{LLT89,McLK1997,PM00} 
or certain of its acceleration \cite{Byrne2017,MVD97,vanDyk1995} are applied for this purpose.
However, if the MM has many components and we are given large data,  a stochastic optimization approach appears to be more efficient.
Indeed, recently, also stochastic variants of the EM algorithm were proposed \cite{CM2009,TZTZ2018}, 
but show various disadvantages
and we are not aware of a circumvent convergence result for these algorithms. 
In particular, one assumption on the stochastic EM algorithm is that the underlying distribution family is an exponential family, which is not the case for MMs.
In this paper, 
we propose for the first time to use the (inertial) PALM algorithms as well as their stochastic variants 
for maximizing a modified version of the log-likelihood function. 
\\

This paper is organized as follows:
In Section \ref{sec:prelim}, we provide the notation used throughout the paper.
To understand the differences of existing algorithms to our novel one, 
we discuss PALM and iPALM together with convergence results in 
Section \ref{sec:PALM}.
Section \ref{sec:sPALM} contains their stochastic variants, where where our iSPALM is
new.
We discuss the convergence behavior of iSPALM in Section \ref{sec:conv}.
In Section \ref{sec:student-t}, we propose a model for learning the parameters of Student-$t$ MMs
based on its log-likelihood function. We show how (inertial) PALM and its stochastic variants (inertial) SPRING
can be used for optimization.
Further, we prove that our model fulfills the assumptions on the convergence of these algorithms.
Section \ref{sec:numerics}
compares the performance of the four algorithms for two examples. We provide the code online\footnote{\url{https://github.com/johertrich/Inertial-Stochastic-PALM}}.
Finally, conclusions are drawn and directions of further research are addressed in Section \ref{sec:concl}.

\section{Preliminaries} \label{sec:prelim}
In this section,  we introduce the basic notation and results which we will use throughout this paper. 

For an proper and lower semi-continuous function $f\colon\R^d\to  (-\infty,\infty]$ 
and $\tau>0$ the \emph{proximal mapping} $\prox_\tau^f\colon\R^d\to\mathcal{P}(\R^d)$ is defined by
$$
\prox_\tau^f(x) \coloneqq\argmin_{y\in\R^d} \left\{ {\tfrac{\tau}2\|x-y\|^2+f(y)} \right\},
$$
where $\mathcal{P}(\R^d)$ denotes the power set of $\R^d$.
The proximal mapping admits the following properties, see e.g.\ \cite{RW98}.

\begin{Proposition}
Let $f\colon\R^d\to\R$ be proper and lower semi-continuous with $\inf_{\R^d}f>-\infty$. Then, the following holds true.
\begin{enumerate}
\item The set $\prox_\tau^f(x)$ is nonempty and compact for any $x\in\R^d$ and $\tau>0$.
\item If $f$ is convex, then $\prox_\tau^f(x)$ contains exactly one value for any $x\in\R^d$ and $\tau>0$.
\end{enumerate}
\end{Proposition}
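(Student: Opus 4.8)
The plan is to reduce both statements to elementary properties of the auxiliary objective
$$
g(y) \defeq \frac{\tau}{2}\|x-y\|^2 + f(y),
$$
whose set of minimizers is precisely $\prox_\tau^f(x)$. First I would record the two structural facts that drive everything. Since $f$ is real-valued and lower semi-continuous and the quadratic term is continuous, $g$ is proper and lower semi-continuous. Moreover, writing $m \defeq \inf_{\R^d} f > -\infty$, we have $g(y) \geq \frac{\tau}{2}\|x-y\|^2 + m$, so $g(y) \to \infty$ as $\|y\| \to \infty$; that is, $g$ is coercive.

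For part (i), I would obtain nonemptiness by a Weierstrass-type argument. Picking a minimizing sequence $(y^k)_k$ with $g(y^k) \to \inf g$, coercivity forces $(y^k)_k$ to be bounded, so by Bolzano--Weierstrass it has a subsequence converging to some $y^*$; lower semi-continuity of $g$ then yields $g(y^*) \leq \liminf_k g(y^k) = \inf g$, so that $y^* \in \prox_\tau^f(x)$ and, in particular, the infimum is finite and attained. For compactness I would note that $\prox_\tau^f(x) = \{y \in \R^d : g(y) \leq \inf g\}$ is a sublevel set of $g$: it is closed because $g$ is lower semi-continuous, and bounded because $g$ is coercive, hence compact.

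For part (ii), I would use that convexity of $f$ together with the strict convexity of $y \mapsto \frac{\tau}{2}\|x-y\|^2$ (valid since $\tau > 0$) makes $g$ strictly convex, and a strictly convex function admits at most one minimizer; combined with the existence from part (i), this yields exactly one element.

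The argument involves no genuine obstacle; the only point requiring care is to invoke lower semi-continuity consistently --- once to pass to the limit along the minimizing sequence, and once to guarantee that the sublevel set $\{y \in \R^d : g(y) \leq \inf g\}$ is closed.
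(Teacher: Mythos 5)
Your proof is correct and is the standard argument: the paper itself offers no proof of this proposition, deferring to \cite{RW98}, where existence and compactness are obtained exactly as you do (coercivity of $y\mapsto \tfrac{\tau}{2}\|x-y\|^2+f(y)$ from $\inf_{\R^d} f>-\infty$, a Weierstrass-type argument along a minimizing sequence, and closedness plus boundedness of the sublevel set $\{y: g(y)\le \inf g\}$ via lower semi-continuity and coercivity), and uniqueness in the convex case follows from strict convexity of the sum. All steps check out, including the two places where lower semi-continuity is invoked, so there is nothing to correct.
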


To describe critical points, we will need the definition of (general) subgradients.

\begin{Definition}
Let $f\colon\R^d\to  (-\infty,\infty]$ be a proper and lower semi-continuous function and $v\in\R^d$.
Then we call
\begin{enumerate}
\item $v$  a \emph{regular subgradient} of $f$ at $\bar x$, written $v\in\hat\partial f(\bar x)$, if for all $x \in \R^d$,
\begin{align}
f(x)\geq f(\bar x)+\langle v,x-\bar x\rangle+o(\|x-\bar x\|).
\end{align}
\item $v$ a (general) \emph{subgradient} of $f$ at $\bar x$, written $v\in\partial f(\bar x)$, if there are sequences $x^k \to \bar x$ 
and $v^k\in \hat\partial f(x^k)$ with $v^k\to v$ as $k\to\infty$.
\end{enumerate}
\end{Definition}

The following proposition lists useful properties of subgradients.

\begin{Proposition}[Properties of Subgradients] \label{prop:subgrad}
Let $f\colon\R^{d_1}\to(-\infty,\infty]$ and $g\colon\R^{d_2}\to(-\infty,\infty]$ be proper 
and lower semicontinuous and let $h\colon\R^{d_1}\to\R$ be continuously differentiable. 
Then the following holds true.
\begin{enumerate}
\item For any $x\in\R^{d_1}$, we have $\hat\partial f(x)\subseteq \partial f(x)$. 
If $f$ is additionally convex, we have $\hat\partial f(x)=\partial f(x)$.
\item For $x\in\R^{d_1}$ with $f(x)<\infty$, it holds 
$$
\hat\partial (f+h)(x)=\hat\partial f(x)+\nabla h(x) \quad\text{and}\quad \partial (f+h)(x)=\partial f(x)+\nabla h(x).
$$
\item If $\sigma(x_1,x_2)=f_1(x_1)+f_2(x_2)$, then
$$
\left(\begin{array}{c}\hat\partial_{x_1} f_1(\bar x_1)\\
\hat\partial_{x_2} f_2(\bar x_2)\end{array}\right)
\subseteq\hat\partial \sigma(\bar x_1,\bar x_2)
\quad\text{and}\quad
\left(\begin{array}{c}\partial_{x_1} f_1(\bar x_1)\\
\partial_{x_2} f_2(\bar x_2)\end{array}\right)
\subseteq\partial \sigma(\bar x_1,\bar x_2).
$$
\end{enumerate}
\end{Proposition}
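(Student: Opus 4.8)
The plan is to treat the three items in the order stated, in each case first handling the regular subgradient $\hat\partial$ directly from its defining inequality and then reducing the general subgradient $\partial$ to the regular case by a limiting argument built into the very definition of $\partial$. For (i), the inclusion $\hat\partial f(x) \subseteq \partial f(x)$ is immediate: given $v \in \hat\partial f(x)$, the constant sequences $x^k \equiv x$ and $v^k \equiv v$ satisfy the requirements in the definition of $\partial f(x)$. The substantial claim is the reverse inclusion when $f$ is convex. I would first establish the auxiliary fact that for convex $f$ one has $v \in \hat\partial f(\bar x)$ if and only if the global inequality $f(x) \geq f(\bar x) + \langle v, x - \bar x\rangle$ holds for all $x$; the nontrivial direction follows by testing the regular-subgradient inequality along the segment $x_t = \bar x + t(x - \bar x)$, using convexity in the form $f(x_t) \leq (1-t)f(\bar x) + t f(x)$, dividing by $t$, and letting $t \to 0^+$ to absorb the $o(\cdot)$ term. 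With this characterization in hand, I would take $v \in \partial f(\bar x)$ with witnessing sequences $x^k \to \bar x$, $v^k \to v$, $v^k \in \hat\partial f(x^k)$, write the global inequality $f(y) \geq f(x^k) + \langle v^k, y - x^k\rangle$ for each $k$, and pass to the limit: the inner-product term converges to $\langle v, y - \bar x\rangle$, while lower semicontinuity gives $\liminf_k f(x^k) \geq f(\bar x)$, so that $f(y) \geq f(\bar x) + \langle v, y - \bar x\rangle$, i.e.\ $v \in \hat\partial f(\bar x)$.

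For (ii), I would exploit that $h$ continuously differentiable yields $h(x) = h(\bar x) + \langle \nabla h(\bar x), x - \bar x\rangle + o(\|x - \bar x\|)$. Substituting this expansion into the regular-subgradient inequality for $f + h$ at $\bar x$ and cancelling the constant and linear contributions of $h$, the statement $v \in \hat\partial(f+h)(\bar x)$ becomes exactly $f(x) \geq f(\bar x) + \langle v - \nabla h(\bar x), x - \bar x\rangle + o(\|x - \bar x\|)$, that is, $v - \nabla h(\bar x) \in \hat\partial f(\bar x)$; this gives the first equality. For the general subgradient I would combine the definition with the regular identity just proved: $v^k \in \hat\partial(f+h)(x^k)$ is equivalent to $v^k - \nabla h(x^k) \in \hat\partial f(x^k)$, and since $\nabla h$ is continuous, $v^k - \nabla h(x^k) \to v - \nabla h(\bar x)$ whenever $v^k \to v$ and $x^k \to \bar x$; reading this in both directions yields $\partial(f+h)(\bar x) = \partial f(\bar x) + \nabla h(\bar x)$.

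For (iii), the separable structure makes the regular case a direct addition of inequalities: from $f_i(x_i) \geq f_i(\bar x_i) + \langle v_i, x_i - \bar x_i\rangle + o(\|x_i - \bar x_i\|)$ for $i = 1, 2$, summing produces the regular-subgradient inequality for $\sigma$ at $(\bar x_1, \bar x_2)$, the linear terms combining to $\langle (v_1,v_2), (x_1,x_2) - (\bar x_1,\bar x_2)\rangle$ and the remainders to $o(\|(x_1,x_2) - (\bar x_1,\bar x_2)\|)$, whence $(v_1,v_2) \in \hat\partial\sigma(\bar x_1,\bar x_2)$. The general case follows by taking product sequences: witnessing sequences for $v_1 \in \partial_{x_1} f_1(\bar x_1)$ and $v_2 \in \partial_{x_2} f_2(\bar x_2)$ combine to $(x_1^k,x_2^k) \to (\bar x_1,\bar x_2)$ with $(v_1^k,v_2^k) \in \hat\partial\sigma(x_1^k,x_2^k)$ by the regular case, and $(v_1^k,v_2^k) \to (v_1,v_2)$ gives $(v_1,v_2) \in \partial\sigma(\bar x_1,\bar x_2)$.

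The main obstacle is the convex equality in (i): proving the global characterization of regular subgradients of convex functions and, more delicately, justifying the limit passage for extended-real-valued $f$, where only lower semicontinuity rather than continuity is available and one must use that $f(x^k)$ is finite at every point with $\hat\partial f(x^k) \neq \emptyset$. The remaining steps are routine bookkeeping with the $o(\cdot)$ terms and the continuity of $\nabla h$.
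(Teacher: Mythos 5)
Your proof is correct, and for part (iii) it coincides with the paper's own argument: summing the two regular-subgradient inequalities (with the remainders absorbed into $o(\|(x_1,x_2)-(\bar x_1,\bar x_2)\|)$, which works since $\|x_i-\bar x_i\|\le\|(x_1,x_2)-(\bar x_1,\bar x_2)\|$) and then lifting to general subgradients via product sequences is exactly what the paper does. Where you genuinely diverge is in parts (i) and (ii): the paper simply cites Rockafellar--Wets (Theorem 8.6, Proposition 8.12, and Exercise 8.8 of \cite{RW98}), whereas you prove both from the definitions --- the global characterization of the convex subdifferential via the segment argument $x_t=\bar x+t(x-\bar x)$ with $t\downarrow 0$, the lower-semicontinuity passage to the limit for general subgradients, and for the sum rule the cancellation of the first-order expansion of $h$ combined with continuity of $\nabla h$ to transfer witnessing sequences in both directions through the pointwise equivalence $v^k\in\hat\partial(f+h)(x^k)\Leftrightarrow v^k-\nabla h(x^k)\in\hat\partial f(x^k)$. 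The citation route is shorter, but your direct route buys something real: the paper's definition of the general subgradient omits the $f$-attentive condition $f(x^k)\to f(\bar x)$ that Rockafellar--Wets build into theirs, so the cited results do not literally apply to the paper's (a priori larger) subdifferential without a remark, while your arguments are stated for, and valid under, the paper's weaker definition --- in (i) precisely because lower semicontinuity alone suffices to pass $f(y)\ge f(x^k)+\langle v^k,y-x^k\rangle$ to the limit. Your closing caveat about finiteness is the right one and is easily discharged: $\hat\partial f(x^k)\neq\emptyset$ forces $f(x^k)<\infty$ by properness, and if $f(\bar x)=\infty$ the limit inequality would force $f\equiv\infty$, so in that degenerate case both sides of the convex equality in (i) are empty and the claim holds vacuously.
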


\begin{proof}
Part (i) was proved in \cite[Theorem 8.6 and Proposition 8.12]{RW98} and part (ii) in \cite[Exercise 8.8]{RW98}. 
Concerning part (iii) we have for $v_{x_i} \in \hat\partial_{x_i} f(\bar x_i)$, $i=1,2$
that for all $(x_1,x_2) \in \R^d \times \R^d$ it holds
$$
\sigma(x_1,x_2)=f_1(x_1)+f_2(x_2)
\geq 
\sum_{i=1}^2
f_i(\bar x_i)+\langle v_{x_i},x_i-\bar x_i \rangle + o(\|x_i-\bar x_i\|).
$$
This proves the claim for regular subgradients. 

For general subgradients consider $v_{x_i}\in\partial_{x_i} f_i(\bar x_i)$, $i=1,2$
By definition there exist sequences $x_i^k\to \bar x_i$ and $v_{x_i}^k\to v_{x_i}$ with $v_{x_i}^k \in\hat\partial_{x_i} f_i(x_i^k)$, $i=1,2$. 
By the statement for regular subgradients we know that $(v_{x_1}^k,v_{x_2}^k)\in\hat\partial\sigma(x_1^k,x_2^k)$. 
Thus, it follows by definition of the general subgradient that $(v_{x_1},v_{x_2})\in\partial \sigma (\bar x_1,\bar x_2)$.
\end{proof}

We call $(x_1,x_2)\in\R^{d_1}\times\R^{d_2}$ a \emph{critical point} of 
$F$ if $0\in\partial F(x_1,x_2)$.
By \cite[Theorem 10.1]{RW98} we have that any local minimizer $\hat x$ 
of a proper and lower semi-continuous function $f\colon\R^d\to(-\infty,\infty]$ fulfills 
$$0\in\hat\partial f(\hat x)\subseteq \partial f(\hat x).$$
In particular, it is a critical point of $f$. 
Further, we have by  Proposition \ref{prop:subgrad} that
$
\hat x \in \prox_\tau^f(x)
$
implies 
\begin{equation} \label{star}
0\in\tau(\hat x - x)+\hat\partial f(y)\subseteq \tau(\hat x - x)+\partial f(y).
\end{equation}

In this paper, we consider functions $F\colon \R^{d_1}\times \R^{d_2}\to(-\infty,\infty]$ of the form
\begin{align} \label{eq:PALM_min_general}
F(x_1,x_2)=H(x_1,x_2)+f(x_1)+g(x_2)
\end{align}
with  proper, lower semicontinuous  functions
$f\colon \R^{d_1}\to(-\infty,\infty]$ and $g\colon\R^{d_2} \to (-\infty,\infty]$
bounded from below 
and a continuously differentiable function $H\colon \R^{d_1}\times\R^{d_2}\to\R$.
Further, we assume throughout this paper that
\begin{align} \label{eq:PALM_min_general_1}
\ubar F\coloneqq\inf_{(x_1,x_2) \in \R^{d_1}\times\R^{d_2}} F (x_1,x_2) >-\infty.
\end{align}
By Proposition \ref{prop:subgrad} it holds
\begin{align}
\left(\begin{array}{c}\partial_{x_1}F(x_1,x_2)\\\partial_{x_2}F(x_1,x_2)\end{array}\right)=&\nabla H(x_1,x_2)
+\left(\begin{array}{c}\partial_{x_1}f(x_1)\\
\partial_{x_2}g(x_2)\end{array}\right)\\
\subseteq& \nabla H(x_1,x_2) +\partial (f+g)(x_1,x_2)=\partial F(x_1,x_2). \label{subdiff_calc}
\end{align}

The \emph{generalized gradient}  of $F\colon \R^{d_1}\times \R^{d_2}\to(-\infty,\infty]$ 
was defined in \cite{DTLDS2020} as set-valued function
$$
\mathscr{G}F_{\tau_1,\tau_2}(x_1,x_2)
\coloneqq
\left(
\begin{array}{c}\tau_1(x_1-\prox^f_{\tau_1}(x_1-\tfrac1{\tau_1}\nabla_{x_1}H(x_1,x_2)))\\
\tau_2(x_2-\prox^g_{\tau_2}(x_2-\tfrac1{\tau_2}\nabla_{x_2}H(x_1,x_2)))\end{array}\right).
$$
To motivate this definition, note that $0\in \mathscr{G}F_{\tau_1,\tau_2}(x_1,x_2)$ 
is a sufficient criterion for $(x_1,x_2)$ being a critical point of $F$. 
This can be seen as follows: 
For $(x_1,x_2)\in\mathscr{G}F_{\tau_1,\tau_2}(x_1,x_2)$ we have
$$
x_1\in\prox_{\tau_1}^f(x_1-\tfrac1{\tau_1}\nabla_{x_1}H(x_1,x_2)).
$$
Using \eqref{star}, this implies 
$$
0\in \tau_1(x_1-x_1+\tfrac1{\tau_1}\nabla_{x_1}H(x_1,x_2))+\partial f(x_1)=\nabla_{x_1}H(x_1,x_2)+\partial f(x_1).
$$
Similarly we get 
$
0\in \nabla_{x_2}H(x_1,x_2)+\partial g(x_2)
$.
By \eqref{subdiff_calc} we conclude that $(x_1,x_2)$ is a critical point of $F$.

\section{PALM and iPALM} \label{sec:PALM}
In this section, we review PALM \cite{BST2014}  and its inertial version iPALM \cite{PS2016}.

\subsection{PALM}
The following Algorithm \ref{alg:PALM_general} for minimizing \eqref{eq:PALM_min_general} was proposed in \cite{BST2014}.\\

\begin{algorithm}[!ht]
\caption{Proximal Alternating Linearized Minimization (PALM)}\label{alg:PALM_general}
\begin{algorithmic}
\State Input: $(x_1^{0},x_2^{0})\in\R^{d_1}\times \R^{d_2}$, parameters $\tau_1^k,\tau_2^k$ for $k \in \N_0$.
\For {$k=0,1,...$}
\State Set
$$
x_1^{k+1}\in\prox_{\tau_1^k}^f\big(x_1^{k}-\tfrac1{\tau_1^k}\nabla_{x_1} H(x_1^{k},x_2^{k})\big)
$$
\State Set
$$
x_2^{k+1}\in\prox_{\tau_2^k}^g\big(x_2^{k}-\tfrac1{\tau_2^k}\nabla_{x_2} H(x_1^{k+1},x_2^{k})\big)
$$
\EndFor
\end{algorithmic}
\end{algorithm}

To prove convergence of PALM the following additional assumptions on $H$ are needed:

\begin{Assumption}[Assumptions on $H$]\label{ass:PALM2}
\begin{enumerate}[(i)]
\item
For any $x_1\in\R^{d_1}$, the function $\nabla_{x_2} H(x_1,\cdot)$ is globally Lipschitz continuous with Lipschitz constant $L_2(x_1)$. 
Similarly, for any $x_2\in\R^{d_2}$, 
the function $\nabla_{x_2} H(\cdot,x_2)$ is globally Lipschitz continuous with Lipschitz constant $L_1(x_2)$.
\item
There exist $\lambda_1^-,\lambda_2^-,\lambda_1^+,\lambda_2^+>0$ such that
\begin{align}
\inf\{L_1(x_2^{k}):k\in\N\}\geq\lambda_1^-\quad&\text{and}\quad\inf\{L_2(x_1^{k}):k\in\N\}\geq\lambda_2^-,\\
\sup\{L_1(x_2^{k}):k\in\N\}\leq\lambda_1^+\quad&\text{and}\quad\sup\{L_2(x_1^{k}):k\in\N\}\leq\lambda_2^+.
\end{align}
\end{enumerate}
\end{Assumption}

\begin{Remark}\label{rem:PALM_other_ass} 
Assume that $H\in\C^2(\R^{d_1\times d_2})$ fulfills assumption \ref{ass:PALM2}(i).
Then, the authors of \cite{BST2014} showed, 
that there are partial Lipschitz constants $L_1(x_2)$ and $L_2(x_1)$, such that Assumption \ref{ass:PALM2}(ii) is satisfied. \hfill $\Box$
\end{Remark}

The following theorem was proven in \cite[Lemma 3, Theorem 1]{BST2014}. 
For the definition of KL functions see Appendix \ref{sec:KL}. 
Here we just mention that semi-algebraic functions are KL functions, see, e.g. \cite{BST2014}.

\begin{Theorem}[Convergence of PALM] \label{thm:PALM_convergence}
Let $F\colon \R^{d_1}\times \R^{d_2}\to(-\infty,\infty]$ by given by \eqref{eq:PALM_min_general}.
fulfills the Assumptions \ref{ass:PALM2} and that $\nabla H$ is Lipschitz continuous on bounded subsets of $\R^{d_1}\times\R^{d_2}$.
Let $(x_1^{k},x_2^{k})_k$ be the sequence generated by PALM, 
where the step size parameters fulfill 
$$\tau_1^k  \ge  \gamma_1 L_1(x_2^k), \quad \tau_2^k \ge  \gamma_2 L_2(x_1^{k+1})$$
for some $\gamma_1,\gamma_2 >1$. 
Then, for $\eta \coloneqq \min\{(\gamma_1-1)\lambda_1^-,(\gamma_2-1)\lambda_2^-\}$,
 the sequence $(F(x_1^{k},x_2^{k}))_k$ is nonincreasing and 
$$
\tfrac{\eta}{2}\big\|(x_1^{k+1},x_2^{k+1})-(x_1^{k},x_2^{k})\big\|_2^2 \leq F(x_1^{k},x_2^{k})-F(x_1^{k+1},x_2^{k+1}).
$$
If $F$ is in addition a KL function and the sequence $(x_1^k,x_2^k)_k$ is bounded, then it converges to a critical point of $F$.
\end{Theorem}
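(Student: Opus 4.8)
The plan is to prove the two halves of the theorem separately: first the sufficient decrease property (the displayed inequality together with monotonicity of $(F(x_1^k,x_2^k))_k$), and then, assuming the KL property and boundedness, the convergence to a critical point. The decrease estimate is the engine that drives everything, so I would establish it first.

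For the sufficient decrease, the key tool is the \emph{descent lemma} for functions with Lipschitz gradient: if $\nabla_{x_1} H(\cdot, x_2^k)$ has Lipschitz constant $L_1(x_2^k)$, then
\begin{align}
H(x_1^{k+1},x_2^k) \leq H(x_1^k,x_2^k) + \langle \nabla_{x_1}H(x_1^k,x_2^k), x_1^{k+1}-x_1^k\rangle + \tfrac{L_1(x_2^k)}{2}\|x_1^{k+1}-x_1^k\|^2.
\end{align}
First I would exploit the defining optimality of the proximal step. Since $x_1^{k+1}\in\prox_{\tau_1^k}^f\big(x_1^k-\tfrac{1}{\tau_1^k}\nabla_{x_1}H(x_1^k,x_2^k)\big)$, comparing the proximal objective value at $x_1^{k+1}$ against its value at $x_1^k$ yields
\begin{align}
f(x_1^{k+1}) + \langle \nabla_{x_1}H(x_1^k,x_2^k), x_1^{k+1}-x_1^k\rangle + \tfrac{\tau_1^k}{2}\|x_1^{k+1}-x_1^k\|^2 \leq f(x_1^k).
\end{align}
Adding the descent lemma bound to this inequality, the linear inner-product terms cancel and one obtains
\begin{align}
H(x_1^{k+1},x_2^k)+f(x_1^{k+1}) \leq H(x_1^k,x_2^k)+f(x_1^k) - \tfrac{\tau_1^k-L_1(x_2^k)}{2}\|x_1^{k+1}-x_1^k\|^2.
\end{align}
With the step-size condition $\tau_1^k\geq \gamma_1 L_1(x_2^k)$ and $\gamma_1>1$, the coefficient is at least $\tfrac{(\gamma_1-1)L_1(x_2^k)}{2}\geq \tfrac{(\gamma_1-1)\lambda_1^-}{2}$ by Assumption \ref{ass:PALM2}(ii). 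Running the same argument for the $x_2$-update (now using $\nabla_{x_2}H(x_1^{k+1},\cdot)$ and $g$), adding the two inequalities, and noting that $g(x_2^k)$ and the intermediate $H$-terms telescope, gives exactly the claimed inequality with $\eta=\min\{(\gamma_1-1)\lambda_1^-,(\gamma_2-1)\lambda_2^-\}$. Monotonicity of $F$ along the iterates is then immediate since the right-hand side is nonnegative.

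For the convergence half, I would invoke the now-standard abstract convergence framework for descent methods satisfying a KL inequality (as in \cite{AB2009,ABRS2010,BST2014}). The scheme requires verifying three conditions: (H1) the sufficient decrease just proved; (H2) a relative error (subgradient) bound of the form $\dist(0,\partial F(x_1^{k+1},x_2^{k+1}))\leq C\|(x_1^{k+1},x_2^{k+1})-(x_1^k,x_2^k)\|$; and (H3) a continuity condition on a limit point. Condition (H2) is obtained from the optimality conditions \eqref{star} of the two proximal steps, which express the subgradient of $F$ at the new iterate in terms of the gradient increments of $H$ and the step differences; here the Lipschitz-continuity of $\nabla H$ on bounded sets (which applies since the bounded iterate sequence stays in a fixed bounded set) is exactly what controls these increments by $\|(x_1^{k+1},x_2^{k+1})-(x_1^k,x_2^k)\|$. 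With boundedness of the sequence and lower semicontinuity of $F$ giving a nonempty compact set of limit points on which $F$ is constant, the KL property then forces $\sum_k \|(x_1^{k+1},x_2^{k+1})-(x_1^k,x_2^k)\|<\infty$, so the sequence is Cauchy and converges; the limit is critical by passing to the limit in (H2).

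The main obstacle is condition (H2), the relative error estimate. The decrease inequality (H1) is a fairly mechanical consequence of the descent lemma and proximal optimality. But (H2) demands carefully writing down the subdifferential of $F$ at the updated point using \eqref{subdiff_calc} and the proximal optimality \eqref{star}, and then recognizing that the resulting subgradient element equals a sum of terms each bounded by a Lipschitz constant times a consecutive-iterate difference. The subtlety is that the gradient of $H$ is evaluated at the \emph{old} point in the prox step but must be compared to its value at the \emph{new} point to land in $\partial F(x_1^{k+1},x_2^{k+1})$; closing this gap is precisely where the Lipschitz-on-bounded-sets hypothesis on $\nabla H$ enters, and getting the block structure and the cross-term $x_1^{k+1}$ versus $x_1^k$ bookkeeping right is the delicate part. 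Since the full abstract machinery is already established in \cite{BST2014}, I would state (H1)--(H3) and cite the convergence theorem there rather than reprove the KL argument from scratch.
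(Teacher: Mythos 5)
Your proposal is correct and follows essentially the same route as the paper, which proves this theorem by citing \cite{BST2014}: your sufficient-decrease step (descent lemma plus comparison of the proximal objective at $x_1^{k+1}$ and $x_1^k$, with the inner products cancelling) is exactly \cite[Lemma 3]{BST2014}, and your (H1)--(H3) KL framework with the relative-error bound from the prox optimality conditions is exactly the argument of \cite[Theorem 1]{BST2014}. The only point worth making explicit is that for (H2) to hold with a uniform constant $C$ you also need the step sizes bounded \emph{above} (e.g.\ $\tau_1^k=\gamma_1 L_1(x_2^k)\leq\gamma_1\lambda_1^+$ via Assumption \ref{ass:PALM2}(ii), which is how they are chosen in \cite{BST2014}), since the theorem's hypothesis as stated only bounds $\tau_i^k$ from below.
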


\subsection{iPALM}
To speed up the performance of PALM the inertial variant iPALM in Algorithm \ref{alg:iPALM_general} was suggested in \cite{PS2016}.

\begin{algorithm}[!ht]
\caption{Inertial Proximal Alternating Linearized Minimization (iPALM)}\label{alg:iPALM_general}
\begin{algorithmic}
\State Input: $(x_1^{-1},x_2^{-1}) =(x_1^{0},x_2^{0})\in\R^{d_1}\times \R^{d_2}$, 
parameters $\alpha_1^{k},\alpha_2^{k},\beta_1^{k},\beta_2^{k},\tau_1^{k},\tau_2^{k}$ for $k\in \N_0$.
\For {$k= 0,1,...$}
\State Set
\begin{align}
y_1^{k}&=x_1^{k}+\alpha_1^{k}(x_1^{k}-x_1^{k-1})\\
z_1^{k}&=x_1^{k}+\beta_1^{k}(x_1^{k}-x_1^{k-1})\\
x_1^{k+1}&\in\prox_{\tau_1^{k}}^f\big(y_1^{k}-\tfrac1{\tau_1^{k}}\nabla_{x_1} H(z_1^{k},x_2^{k})\big)
\end{align}
\State Set
\begin{align}
y_2^{k}&=x_2^{k}+\alpha_2^{k}(x_2^{k}-x_2^{k-1})\\
z_2^{k}&=x_2^{k}+\beta_2^{k}(x_2^{k}-x_2^{k-1})\\
x_2^{k+1}&\in\prox_{\tau_2^{k}}^g\big(y_2^{k}-\tfrac1{\tau_2^{k}}\nabla_{x_2} H(x_1^{k+1},z_2^{k})\big)
\end{align}
\EndFor
\end{algorithmic}
\end{algorithm}

\begin{Remark}[Relation to Momentum Methods]\label{rem_ipalm_momentum}
The inertial parameters in iPALM can be viewed as a generalization of momentum parameters for nonsmooth functions. 
To see this, note that iPALM with one block, $f=0$ and $\beta^k=0$ reads as
\begin{align}
y^k&=x^k+\alpha^k(x^k-x^{k-1}),\\
x^{k+1}&=y^k-\tfrac1{\tau^k}\nabla H(x^k).
\end{align}
By introducing $g^k\coloneqq x^k-x^{k-1}$, this can be rewritten as
\begin{align}
g^{k+1}&=\alpha^k g^k - \tfrac1{\tau^k} \nabla H(x^k),\\
x^{k+1}&=x^k+g^{k+1}.
\end{align}
This is exactly the momentum method as introduced by Polyak in \cite{P1964}.
Similar, if $f=0$ and $\alpha^k=\beta^k\neq 0$, iPALM can be rewritten as
\begin{align}
g^{k+1}&=\alpha^k g^k - \tfrac1{\tau^k} \nabla H(x^k+\alpha^k g^k),\\
x^{k+1}&=x^k+g^{k+1},
\end{align}
which is known as Nesterov's Accelerated Gradient (NAG) \cite{N1983}.
Consequently, iPALM can be viewed as a generalization of both the classical momentum method and NAG to the nonsmooth case.
Even if there exists no proof of tighter convergence rates for iPALM than for PALM, this motivates that the inertial steps really accelerate PALM, since NAG has tighter convergence rates than a plain gradient descent algorithm.
\hfill $\Box$
\end{Remark}

To prove the convergence of iPALM  the parameters of the algorithm must be carefully chosen.

\begin{Assumption}[Conditions on the Parameters of iPALM]\label{ass:iPALM3}
Let $\lambda_i^+$, $i=1,2$ and $L_1(x_2^k)$, $L_2(x_1^k)$ be defined by Assumption \ref{ass:PALM2}.
There exists some $\epsilon>0$ such that for all $k\in\N$ and $i=1,2$ the following holds true:
\begin{enumerate}[(i)]
\item There exist $0<\bar\alpha_i<\tfrac{1-\epsilon}{2}$ such that $0\leq\alpha_i^{k}\leq\bar\alpha_i$
and $0< \bar\beta_i \le 1$ such that $0\leq\beta_i^{k}\leq\bar\beta_i$.
\item The parameters $\tau_1^{k}$ and $\tau_2^{k}$ are given by
$$
\tau_1^{k}\coloneqq \frac{(1+\epsilon)\delta_1+(1+\bar\beta_1)L_1(x_2^{k})}{1-\alpha_1^{k}} 
\quad\text{and}\quad
\tau_2^{k}\coloneqq \frac{(1+\epsilon)\delta_2+(1+\bar\beta_2)L_2(x_1^ {k+1})}{1-\alpha_2^{k}},
$$
and for $i=1,2$,
$$
\delta_i\coloneqq \frac{\bar \alpha_i+\bar\beta_i}{1-\epsilon-2\bar\alpha_i}\lambda_i^+ .
$$
\end{enumerate}
\end{Assumption}

The following theorem was proven in \cite[Theorem 4.1]{PS2016}.

\begin{Theorem}[Convergence of iPALM]\label{thm:iPALM_convergence}
Let $F\colon \R^{d_1}\times \R^{d_2}\to(-\infty,\infty]$  given by \eqref{eq:PALM_min_general} be a KL function.
Suppose that $H$ 
fulfills the Assumptions \ref{ass:PALM2} and that $\nabla H$ is Lipschitz continuous on bounded subsets of $\R^{d_1}\times\R^{d_2}$. 
Further, let the parameters of iPALM fulfill the parameter conditions \ref{ass:iPALM3}.
If the sequence $(x_1^{k},x_2^{k})_k$ generated by iPALM is bounded, then
it converges to a critical point of $F$.
\end{Theorem}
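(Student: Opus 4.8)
The plan is to follow the KL proof scheme underlying the convergence analysis of PALM and iPALM \cite{AB2009,ABRS2010,BST2014,PS2016}. The central difficulty compared with PALM is that $F$ itself need not decrease monotonically along the iterates, since the inertial extrapolations $y_i^k$ and $z_i^k$ may push the value of $F$ upward. I would therefore not work with $F$ directly but with an auxiliary Lyapunov function on the product space $\R^{d_1}\times\R^{d_2}\times\R^{d_1}\times\R^{d_2}$ of consecutive iterate pairs,
$$\Psi(x^k,x^{k-1}) \coloneqq F(x^k) + \sum_{i=1}^2 \tfrac{c_i}{2}\big\|x_i^k - x_i^{k-1}\big\|^2,$$
with coefficients $c_i>0$ to be fixed. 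For this $\Psi$ I would verify the three standard ingredients of the scheme: (H1) a sufficient-decrease estimate, (H2) a relative-error (subgradient) bound, and (H3) a continuity property on the set of limit points. Together with the KL property of $\Psi$, these yield finite length of the sequence, hence convergence to a single critical point.

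For (H1), which is the crux, I would start from the minimality defining each prox step. Since $x_1^{k+1}$ minimizes the strongly convex surrogate, comparing its value with the one at $x_1^k$ gives
$$\tfrac{\tau_1^k}{2}\big\|x_1^{k+1}-x_1^k\big\|^2 + f(x_1^{k+1}) \le f(x_1^k) + \big\langle \nabla_{x_1}H(z_1^k,x_2^k),\, x_1^k - x_1^{k+1}\big\rangle + \tau_1^k\big\langle y_1^k - x_1^k,\, x_1^{k+1}-x_1^k\big\rangle.$$
Adding the descent lemma for the smooth part $H$ along the $x_1$-block (with Lipschitz constant $L_1(x_2^k)$ from Assumption \ref{ass:PALM2}, evaluated at $z_1^k$) and expanding $y_1^k,z_1^k$ through the definitions of $\alpha_1^k,\beta_1^k$, the inertial contributions surface as positive multiples of $\|x_1^k-x_1^{k-1}\|^2$. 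The parameter choices in Assumption \ref{ass:iPALM3} --- in particular the formula for $\tau_1^k$, the definition of $\delta_i$, and the threshold $\bar\alpha_i<\tfrac{1-\epsilon}{2}$ --- are engineered precisely so that, after repeating this for the $x_2$-block and summing, these unfavorable terms can be absorbed into the quadratic terms of $\Psi$, leaving a strictly positive coefficient $a>0$ in front of $\|x^{k+1}-x^k\|^2$. This is the delicate, calculation-heavy step, made harder by the alternating structure, since the $x_2$-update is evaluated at the already-updated $x_1^{k+1}$.

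For (H2), I would invoke the first-order optimality condition \eqref{star} of the prox step, which gives $-\tau_1^k(x_1^{k+1}-y_1^k)-\nabla_{x_1}H(z_1^k,x_2^k)\in\partial f(x_1^{k+1})$. By \eqref{subdiff_calc}, adding $\nabla_{x_1}H(x_1^{k+1},x_2^{k+1})$ produces an element of $\partial_{x_1}F(x_1^{k+1},x_2^{k+1})$, whose norm I would control by the Lipschitz continuity of $\nabla H$ on the bounded set containing the (assumed bounded) iterates together with the extrapolation identities, giving a bound of the form $b\big(\|x^{k+1}-x^k\|+\|x^k-x^{k-1}\|\big)$; appending the gradient of the quadratic part of $\Psi$ upgrades this to a subgradient bound for $\Psi$. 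For (H3), summing (H1) shows $\sum_k\|x^{k+1}-x^k\|^2<\infty$, so the increments vanish; boundedness then yields a nonempty, compact, connected limit set on which, by lower semicontinuity of $F$ and the vanishing increments, $\Psi$ is constant, and every limit point is a critical point of $F$ by \eqref{subdiff_calc} and closedness of the graph of $\partial F$. Finally, since $F$ is KL and we add only a smooth quadratic, $\Psi$ is KL on the product space, so the abstract convergence theorem of \cite{AB2009,ABRS2010} applies and forces $\sum_k\|x^{k+1}-x^k\|<\infty$; the sequence is then Cauchy and converges to a single point, which by (H3) is a critical point of $F$. The main obstacle throughout is (H1): matching the engineered conditions of Assumption \ref{ass:iPALM3} to a genuinely positive Lyapunov decrease is a careful bookkeeping of several coupled quadratic forms, and it is exactly here that the thresholds on $\alpha_i^k$, $\beta_i^k$ and $\tau_i^k$ are consumed.
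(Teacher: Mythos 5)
Your proposal follows essentially the same route as the proof the paper relies on, namely \cite[Theorem 4.1]{PS2016} (and the same strategy reappears in this paper's own iSPALM analysis, Theorem \ref{thm:l2_steps}): a Lyapunov function $\Psi$ on the doubled space with quadratic coupling terms whose coefficients are exactly the $\delta_i$ of Assumption \ref{ass:iPALM3}, a sufficient-decrease estimate absorbing the inertial terms, a relative subgradient bound from the prox optimality conditions together with Lipschitz continuity of $\nabla H$ on the bounded iterates, and the abstract KL finite-length argument of \cite{AB2009,ABRS2010,BST2014}. One small caution: your final step, that $\Psi$ inherits the KL property from $F$ because one ``adds only a smooth quadratic,'' is not a formal one-liner for arbitrary desingularizing functions --- it is justified when $F$ is semi-algebraic or definable (which is what \cite{PS2016} actually assumes) or via KL-exponent calculus for the specific coupling $F(u)+\sum_i \tfrac{\delta_i}{2}\|u_i-v_i\|^2$, so you should either strengthen the hypothesis accordingly or cite such a result explicitly.
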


\begin{Remark}
Even though we cited PALM and iPALM just for two blocks $(x_1,x_2)$ of variables, 
the convergence proofs from \cite{BST2014} and \cite{PS2016} even work with more than two blocks.
\hfill $\Box$
\end{Remark}

\section{Stochastic Variants of PALM and iPALM} \label{sec:sPALM}
For many problems in imaging and machine learning
the function $H:\R^{d_1}\times \R^{d_2}\to \R$ in 
\eqref{eq:PALM_min_general} is of the form
\begin{equation} \label{special_form_h}
H(x_1,x_2)=\frac1n\sum_{i=1}^n h_i(x_1,x_2),
\end{equation}
where $n$ is large. Then the computation of the gradients in PALM and iPALM is very time consuming. 
Therefore stochastic approximations of the gradients were considered in the literature.

\subsection{Stochastic PALM and SPRING}
The idea to combine stochastic gradient estimators with a PALM scheme 
was first discussed by  Xu and Yin in \cite{XY2015}. 
The authors replaced the gradient in Algorithm \ref{alg:PALM_general} by 
the \emph{stochastic gradient descent} (SGD) \emph{estimator}
\begin{equation} \label{special_form}
\tilde\nabla_{x_i} H(x_1,x_2) \coloneqq \frac1b\sum_{j\in B}\nabla_{x_i}h_j(x_1,x_2),
\end{equation}
where $B\subset \{1,...,n\}$ is a random subset (mini-batch) of fixed batch size $b=|I|$. 
This gives Algorithm \ref{alg:SPALM_general} which we call SPALM.

\begin{algorithm}[!ht]
\caption{Stochastic Proximal Alternating Linearized Minimization (SPALM/SPRING)}\label{alg:SPALM_general}
\begin{algorithmic}
\State Input: $(x_1^{0},x_2^{0})\in\R^{d_1}\times \R^{d_2}$, parameters $\tau_1^k,\tau_2^k$  for $k \in \N_0$.
\For {$k=0,1,...$}
\State Set
$$
x_1^{k+1}\in\prox_{\tau_1^k}^f\big(x_1^{k}-\tfrac1{\tau_1^k}\tilde\nabla_{x_1} H(x_1^{k},x_2^{k})\big)
$$
\State Set
$$
x_2^{k+1}\in\prox_{\tau_2^k}^g\big(x_2^{k}-\tfrac1{\tau_2^k}\tilde\nabla_{x_2} H(x_1^{k+1},x_2^{k})\big)
$$
\EndFor
\end{algorithmic}
\end{algorithm}

Xu and Yin showed under rather strong assumptions, 
in particular $f,g$ have to be Lipschitz continuous and the variance of the SGD estimator has to be bounded,
that there exists a subsequence $(x_1^k,x_2^k)_k$ of iterates generated by 
Algorithm \ref{alg:SPALM_general} such that the sequence
$\E\left(\dist(0,\partial F(x_1^k,x_2^k) \right)$ converges to zero as $k \to \infty$.
If $F$, $f$ and $g$ are strongly convex, the authors proved also convergence of the function values to the infimum of $F$.

Driggs et al.  \cite{DTLDS2020} could weaken the assumptions 
and improve the convergence rate by replacing the SGD estimator 
by so-called variance-reduced gradient estimators.
Let $\E_k=\E(\cdot|(x_1^1,x_2^1),(x_1^2,x_2^2),...,(x_1^k,x_2^k))$ 
be the \emph{conditional expectation on the first $k$ sequence elements}.
Then these estimators have to fulfill the following properties.

\begin{Definition}[Variance-Reduced Gradient Estimator]\label{def_var-red}
A gradient estimator 
$\tilde\nabla$ is called \emph{variance-reduced} for a differentiable function $H: \R^{d_1} \times \R^{d_2} \to \R$ 
with constants $V_1,V_2,V_\Upsilon\geq0$ and $\rho\in(0,1]$, 
if for any sequence $(x^k)_k=(x_1^k,x_2^k)_k$ the following holds true:
\begin{enumerate}[(i)]
\item There exist random vectors $v_k^i$, $k\in \N_0$, $i=1,...,s$ 
such that for $\Upsilon_k=\sum_{i=1}^s\|v_k^i\|^2$,
\begin{align}
&\E_k(\|\tilde\nabla_{x_1}H(x_1^k,x_2^k)-\nabla_{x_1}H(x_1^k,x_2^k)\|^2
+\|\tilde\nabla_{x_2}H(x_1^{k+1},x_2^k)-\nabla_{x_2}H(x_1^{k+1},x_2^k)\|^2)\\
&\leq\Upsilon_k+V_1(\E_k(\|x^{k+1}-x^k\|^2)+\|x^k-x^{k-1}\|^2)
\end{align}
and for $\Gamma_k=\sum_{i=1}^s\|v_k^i\|$,
\begin{align}
&\E_k(\|\tilde\nabla_{x_1}H(x_1^k,x_2^k)-\nabla_{x_1}H(x_1^k,x_2^k)\|+\|\tilde\nabla_{x_2}H(x_1^{k+1},x_2^k)-\nabla_{x_2}H(x_1^{k+1},x_2^k)\|)\\
&\leq \Gamma_k+V_2(\E_k(\|x^{k+1}-x^k\|)+\|x^k-x^{k-1}\|).
\end{align}
\item The sequence $(\Upsilon_k)_k$ decays geometrically, that is
\begin{align}
\E_k(\Upsilon_{k+1})\leq(1-\rho)\Upsilon_k+V_\Upsilon(\E_k(\|x^{k+1}-x^k\|^2)+\|x^k-x^{k-1}\|^2). \label{eq:variance_reduced_geom_decay}
\end{align}
\item If $\lim_{k\to\infty}\E(\|x^k-x^{k-1}\|^2)=0$, then it holds $\E(\Upsilon_k)\to0$ and $\E(\Gamma_k)\to0$ as $k \rightarrow \infty$.
\end{enumerate}
\end{Definition}

While the SGD estimator is not variance-reduced, many popular gradient estimators as the 
SAGA \cite{DBL2014} and SARAH \cite{NLST2017} estimators have this property.
Since for many problems in image processing and machine learning 
the SAGA estimator is not applicable due to its high memory requirements,
we will focus on the SARAH estimator in this paper. 

\begin{Definition} [SARAH Estimator] \label{def:sarah}
The SARAH \emph{estimator} reads for $k=0$ as
$$
\tilde\nabla_{x_1} H(x_1^0,x_2^0)=\nabla_{x_1} H(x_1^0,x_2^0).
$$
For $k=1,2,...$ we define random variables $p_i^k\in\{0,1\}$ with $P(p_i^k=0)=\tfrac1p$ and $P(p_i^k=1)=1-\tfrac1p$, 
where $p\in(1,\infty)$ is a fixed chosen parameter. Further, we define $B_i^k$ to be random subsets 
uniformly drawn from $\{1,...,n\}$ of fixed batch size $b$. 
Then for $k=1,2,...$ the SARAH estimator reads as
{\small
\begin{equation}\label{sarah}
\tilde\nabla_{x_1} H(x_1^k,x_2^k) =
\begin{cases}
\nabla_{x_1} H(x_1^k,x_2^k),&$if $p_1^k=0,\\
\tfrac1b \sum\limits_{i\in B_i^k}\nabla_{x_1} h_i(x_1^k,x_2^k)-\nabla_{x_1} h_i(x_1^{k-1},x_2^{k-1}) 
+ \tilde\nabla_{x_1}H(x_1^{k-1},x_2^{k-1}),&$if $p_1^k=1,
\end{cases}
\end{equation}
}
and analogously for $\tilde \nabla_{x_2} H$. In the sequel, 
we assume that the family of the random elements $p_i^k$, $B_i^k$ for $i=1,2$ and $k=1,2,...$ is independent.
\end{Definition}

The following proposition was shown in \cite{DTLDS2020}.

\begin{Proposition}
Let $H\colon\R^{d_1} \times \R^{d_2} \to \R$ be given by \eqref{special_form_h} with
functions  $h_i\colon \R^{d_1}\times\R^{d_2}$ having a globally $M$-Lipschitz continuous gradient.
Then the SARAH gradient estimator is variance-reduced with parameters $\rho=\tfrac1p$ and $V_1=V_\Upsilon=2M^2$, $V_2=2M$.
\end{Proposition}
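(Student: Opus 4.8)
The plan is to verify the three properties in Definition \ref{def_var-red} directly from the SARAH recursion \eqref{sarah}, with the right choice of the auxiliary vectors $v_k^i$. I would take $s=2$ and let $v_k^1,v_k^2$ record the gradient errors committed \emph{one step earlier}. To organise the staggered two-block evaluation, abbreviate the points at which the block-$i$ estimator is evaluated by
\[
w_1^k\coloneqq(x_1^k,x_2^k),\qquad w_2^k\coloneqq(x_1^{k+1},x_2^k),
\]
so that $w_2^{k-1}=(x_1^k,x_2^{k-1})$ and, by \eqref{sarah}, the block-$i$ estimator at step $k$ always compares $w_i^k$ with $w_i^{k-1}$. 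Writing $E_i^k\coloneqq\tilde\nabla_{x_i}H(w_i^k)-\nabla_{x_i}H(w_i^k)$ for the two error terms on the left-hand side of Definition \ref{def_var-red}(i), I set $v_k^i\coloneqq E_i^{k-1}$ (with $v_0^i\coloneqq0$, consistent since the estimator is exact at $k=0$). Then $\Upsilon_k=\|E_1^{k-1}\|^2+\|E_2^{k-1}\|^2$ and $\Gamma_k=\|E_1^{k-1}\|+\|E_2^{k-1}\|$, and the whole task reduces to controlling the current error $E_i^k$ by the previous one.

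The heart of the argument is the SARAH error recursion. Conditioning on the iterates and sampling history up to step $k$ and splitting on the Bernoulli variable $p_i^k$: with probability $\tfrac1p$ the estimator is exact and $E_i^k=0$, while with probability $1-\tfrac1p$ formula \eqref{sarah} gives $E_i^k=E_i^{k-1}+\xi_i^k$ with
\[
\xi_i^k\coloneqq\tfrac1b\sum_{j\in B_i^k}\big(\nabla_{x_i}h_j(w_i^k)-\nabla_{x_i}h_j(w_i^{k-1})\big)-\big(\nabla_{x_i}H(w_i^k)-\nabla_{x_i}H(w_i^{k-1})\big).
\]
Since $B_i^k$ is sampled uniformly and independently of the past, $\xi_i^k$ is conditionally centered, so the cross term drops out and $\E_k\|E_i^{k-1}+\xi_i^k\|^2=\|E_i^{k-1}\|^2+\E_k\|\xi_i^k\|^2$. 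I would bound the minibatch variance by $\E_k\|\xi_i^k\|^2\le\tfrac1b M^2\,\E_k\|w_i^k-w_i^{k-1}\|^2\le M^2\,\E_k\|w_i^k-w_i^{k-1}\|^2$ using the $M$-Lipschitz continuity of $\nabla h_j$ and $b\ge1$. Because $\|w_1^k-w_1^{k-1}\|^2=\|x^k-x^{k-1}\|^2$ and $\|w_2^k-w_2^{k-1}\|^2\le\|x^{k+1}-x^k\|^2+\|x^k-x^{k-1}\|^2$, summing the two blocks yields
\[
\E_k\big(\|E_1^k\|^2+\|E_2^k\|^2\big)\le\big(1-\tfrac1p\big)\Upsilon_k+2M^2\big(\E_k(\|x^{k+1}-x^k\|^2)+\|x^k-x^{k-1}\|^2\big),
\]
where the constant $2M^2$ appears because the increment $\|x^k-x^{k-1}\|^2$ is produced by \emph{both} blocks. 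This is exactly Definition \ref{def_var-red}(ii) with $\rho=\tfrac1p$ and $V_\Upsilon=2M^2$, and since $1-\tfrac1p\le1$ it also gives the second-moment part of (i) with $V_1=2M^2$.

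For the first-moment estimate in (i) I would rerun the split, but replace the orthogonal decomposition by the triangle inequality $\|E_i^k\|\le\|E_i^{k-1}\|+\|\xi_i^k\|$ on $\{p_i^k=1\}$ together with the Lipschitz bound and Jensen's inequality to control $\E_k\|\xi_i^k\|$ linearly in the displacement $\|w_i^k-w_i^{k-1}\|$; summing the two blocks and using $\sqrt{a^2+b^2}\le a+b$ produces the bound with $V_2=2M$. Finally, property (iii) follows from the geometric recursion established above: if $\E(\|x^k-x^{k-1}\|^2)\to0$, the forcing term tends to $0$, so the elementary lemma for sequences satisfying $a_{k+1}\le(1-\rho)a_k+c_k$ with $c_k\to0$ and $\rho\in(0,1]$ forces $\E(\Upsilon_k)\to0$, and $\E(\Gamma_k)\le\sqrt{2\,\E(\Upsilon_k)}\to0$ by Cauchy--Schwarz. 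I expect the main obstacle to be the careful conditioning behind the staggered two-block structure: one must set up the filtration so that $E_i^{k-1}=v_k^i$ is treated as known while the fresh batch renders $\xi_i^k$ centered, since it is precisely this that simultaneously produces the contraction factor $1-\tfrac1p$ and pins down the constants $2M^2$ and $2M$ via the doubled appearance of $\|x^k-x^{k-1}\|$.
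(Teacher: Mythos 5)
Your proposal is correct and takes essentially the same route as the paper's own treatment: the paper merely cites \cite{DTLDS2020} for this proposition, but its Appendix \ref{app:sarah} proof of the inertial analogue (Proposition \ref{prop:sarah-ivr}) is structurally identical to what you do --- condition on the event $p_i^k=1$, use conditional unbiasedness of the fresh batch so the cross terms vanish (the paper expands the square and cancels the inner products; you phrase this as $\xi_i^k$ being centered), bound the mini-batch term by Jensen and $M$-Lipschitzness, and sum the two staggered blocks so that $\|x^k-x^{k-1}\|^2$ enters twice, producing $V_1=V_\Upsilon=2M^2$ and $\rho=\tfrac1p$, with the same limiting argument for property (iii).

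The one point you should make explicit is the first-moment constant $V_2=2M$ (which the paper's inertial Definition \ref{def:ivr} drops entirely, so it has no in-paper counterpart): a purely pathwise triangle-inequality bound gives $\|\xi_i^k\|\le 2M\|w_i^k-w_i^{k-1}\|$ per block and hence only $V_2=4M$ after summing. To land on $2M$, your appeal to Jensen must be implemented conditionally: first enlarge the conditioning to include $x_1^{k+1}$ for the second block (legitimate, since $B_2^k$ is independent of $B_1^k$ and $p_1^k$), so that the displacement $w_i^k-w_i^{k-1}$ is deterministic under the enlarged conditioning; then
\begin{equation}
\E\left(\|\xi_i^k\|\,\middle|\,\mathcal F\right)\leq \left(\E\left(\|\xi_i^k\|^2\,\middle|\,\mathcal F\right)\right)^{1/2}\leq M\|w_i^k-w_i^{k-1}\|,
\end{equation}
and the tower property gives $M$ per block, hence $V_2=2M$ after summing; as written, your sketch is ambiguous between this argument and the weaker $4M$ one.
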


The convergence results in the next theorem were proven in \cite{DTLDS2020}. 
We refer to the type of convergence in \eqref{spring_conv_2} as \emph{linear convergence}.
Note that the parameter $V_2$ from the definition of variance reductions does not appear in the theorem.
Actually, Driggs et al.  \cite{DTLDS2020} need the assumption containing $V_2$ to prove tighter convergence rates 
for semi-algebraic functions $F$.

\begin{Theorem}[Convergence of SPRING]\label{thm:conv_spring}
Let $F$ by given as in \eqref{eq:PALM_min_general}, where
$H$ fulfills the Assumptions \ref{ass:PALM2}.
Let $\tilde\nabla$ be a variance-reduced estimator for $H$ with parameters $V_1,V_\Upsilon\geq0$ and $\rho\in(0,1]$.
Assume that 
$\bar\gamma_k\coloneqq\max(\tfrac1{\tau_1^k},\tfrac1{\tau_2^k})$ 
is non-increasing and that  $0<\beta \coloneqq \inf_k \min( \tfrac1{\tau_1^k},\tfrac1{\tau_2^k})$.
Further suppose that for all $k\in\N$,
$$
\bar\gamma_k\leq\tfrac1{16}\sqrt{\tfrac{M^2}{(V_1+\tfrac{V_\Upsilon}{\rho})^2}
+\tfrac{16}{V_1+\tfrac{V_\Upsilon}\rho}}-\tfrac{M}{16(V_1+\tfrac{V_1}\rho)}.
$$
Let the stepsize in SPRING fulfill
$\tau_1^k>4 \lambda_1^+$, $\tau_2^k>4\lambda_2^+$ and set $\eta\coloneqq\max(\tfrac{\tau_1^k}{4}-\lambda_1^+,\tfrac{\tau_2^k}{4}-\lambda_2^+)$.
Then, with $t$ drawn uniformly from $\{0,...,T-1\}$, the generalized gradient at $(x_1^t,x_2^t)$ after $T$ iterations of SPRING satisfies
\begin{align} \label{spring_conv_1}
\E(\|\mathscr{G}F_{2\tau_1^t,2\tau_2^t}(x_1^t,x_2^t)\|^2)\leq \frac{4 (F(x_1^0,x_2^0)+\tfrac{2\bar\gamma_0}{\rho}\Upsilon_0)}{T \eta \beta^2}.
\end{align}
Furthermore, if for some $\gamma>0$, the function $F$ fulfills the error bound
\begin{align}
F(x_1,x_2)-\ubar F\leq \gamma\|\mathscr{G}F_{\tau_1,\tau_2}(x_1,x_2)\|^2
\end{align} 
for all $(x_1,x_2) \in \R^{d_1} \times \R^{d_2}$, then it holds after $T$ iterations of SPRING that
\begin{align}\label{spring_conv_2}
\E(F(x_1^T,x_2^T)-\ubar F)\leq (1-\Theta)^T (F(x_1^0,x_2^0)-\ubar F+\tfrac{4\bar\gamma_0}{\rho}\Upsilon_0),
\end{align}
where $\Theta=\min(\tfrac{\gamma \eta \beta^2}{4},\tfrac\rho2)$. In particular, the sequence $\E(F(x_1^T,x_2^T))$ converges linearly to $\ubar F$ as $T\to\infty$.
\end{Theorem}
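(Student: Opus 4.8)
The plan is to track a single Lyapunov functional that augments the objective by a weighted copy of the variance proxy, derive a one-step expected-decrease inequality for it, and then sum the inequality (for the ergodic bound \eqref{spring_conv_1}) or iterate it as a contraction (for the linear rate \eqref{spring_conv_2}). Concretely, I would work with $\Psi_k \coloneqq F(x_1^k,x_2^k) + c\,\bar\gamma_k\,\Upsilon_k$ for a weight $c$ of order $1/\rho$, the natural choice being dictated by the geometric-decay rate $\rho$ in Definition \ref{def_var-red}(ii) and matched to the terms $\tfrac{2\bar\gamma_0}{\rho}\Upsilon_0$ and $\tfrac{4\bar\gamma_0}{\rho}\Upsilon_0$ appearing in the two conclusions.

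First I would establish a purely deterministic descent estimate from the two proximal updates. Writing the optimality inequality for $x_i^{k+1}\in\prox_{\tau_i^k}(\cdots)$ tested against the point $x_i^k$, and combining it with the descent lemma for $H$ (valid because $\nabla_{x_i}H$ is $L_i$-Lipschitz by Assumption \ref{ass:PALM2}), I expect an inequality of the shape
$$
F(x^{k+1}) \le F(x^k) - \sum_{i=1,2}\Big(\tfrac{\tau_i^k}{2}-\tfrac{L_i}{2}\Big)\|x_i^{k+1}-x_i^k\|^2 + \sum_{i=1,2}\langle \nabla_{x_i}H-\tilde\nabla_{x_i}H,\; x_i^{k+1}-x_i^k\rangle,
$$
where the last sum appears precisely because SPRING uses the estimator $\tilde\nabla H$ in place of $\nabla H$, and is the only obstruction to monotone descent.

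Next I would take the conditional expectation $\E_k$ and split each error cross term by Young's inequality, trading a fraction of $\|x_i^{k+1}-x_i^k\|^2$ against a multiple of $\|\tilde\nabla_{x_i}H-\nabla_{x_i}H\|^2$. The latter is exactly the quantity bounded in Definition \ref{def_var-red}(i) by $\Upsilon_k + V_1(\E_k\|x^{k+1}-x^k\|^2+\|x^k-x^{k-1}\|^2)$. Feeding in the geometric-decay property \eqref{eq:variance_reduced_geom_decay} to control $\E_k(\Upsilon_{k+1})$ inside $\Psi_{k+1}$, I would obtain an expected one-step decrease $\E_k(\Psi_{k+1}) \le \Psi_k - \eta\,\|x^{k+1}-x^k\|^2$ with the advertised coefficient $\eta$; positivity of $\eta$ is guaranteed by $\tau_i^k>4\lambda_i^+$, and the quantitative step-size bound on $\bar\gamma_k$ is what forces the remaining increment terms to cancel. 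Finally, because the realized update $\tau_i^k(x_i^k-x_i^{k+1})$ differs from the full-gradient generalized gradient $\mathscr{G}F_{2\tau_1^k,2\tau_2^k}(x_1^k,x_2^k)$ only through the gradient error (controlled again by $\Upsilon_k$ and nonexpansiveness of $\prox$), I can replace $\|x^{k+1}-x^k\|^2$ by $\beta^2\|\mathscr{G}F_{2\tau_1^k,2\tau_2^k}(x_1^k,x_2^k)\|^2$ up to constants. Summing over $k=0,\dots,T-1$, telescoping $\Psi$, and averaging then yields \eqref{spring_conv_1}, while the error bound $F-\ubar F\le\gamma\|\mathscr{G}F\|^2$ lower-bounds the per-step decrease by a multiple of $\E(\Psi_k-\ubar F)$, turning the additive decrease into the contraction $\E(\Psi_{k+1}-\ubar F)\le(1-\Theta)\,\E(\Psi_k-\ubar F)$ whose iteration gives \eqref{spring_conv_2}.

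The main obstacle is the constant bookkeeping in the third step: one must choose the Lyapunov weight $c$ and the Young parameters so that, after inserting both variance-reduction properties, the net coefficient of every $\|x^{k+1}-x^k\|^2$ and every backward increment $\|x^k-x^{k-1}\|^2$ is nonpositive, leaving only the single retained descent term $-\eta\|x^{k+1}-x^k\|^2$. Absorbing the backward increment coming from Definition \ref{def_var-red}(i) — possibly by carrying a small lagged term in $\Psi$ — while keeping $\eta$ strictly positive is exactly where the explicit upper bound on $\bar\gamma_k$ in terms of $M$, $V_1$, $V_\Upsilon$ and $\rho$ gets consumed, and getting every constant to close simultaneously is the delicate part of the argument.
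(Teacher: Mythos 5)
The paper does not actually prove Theorem \ref{thm:conv_spring}: it is quoted verbatim from \cite{DTLDS2020}, so the fair comparison is against the cited proof and against the paper's own parallel analysis of iSPALM in Section \ref{sec:conv}. Measured against that, your overall architecture is the right one: the Lyapunov functional $F(x^k)+c\,\bar\gamma_k\Upsilon_k$ with $c$ of order $1/\rho$, the one-step estimate from the prox optimality plus the descent lemma, Young's inequality to trade the cross term $\langle\nabla H-\tilde\nabla H, x^{k+1}-x^k\rangle$ against $\Upsilon_k$ via Definition \ref{def_var-red}(i), the geometric decay \eqref{eq:variance_reduced_geom_decay} to control $\E_k(\Upsilon_{k+1})$, a lagged term to absorb the backward increments $\|x^k-x^{k-1}\|^2$, telescoping for \eqref{spring_conv_1} and the error-bound contraction with $\Theta=\min(\tfrac{\gamma\eta\beta^2}{4},\tfrac{\rho}{2})$ for \eqref{spring_conv_2} -- all of this matches the structure of the cited proof (and of Theorems \ref{thm:l2_steps}--\ref{thm:ispring_conv_2} here, which were modeled on it).

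There is, however, one genuine gap: the step where you pass from $\|x^{k+1}-x^k\|^2$ to $\beta^2\|\mathscr{G}F_{2\tau_1^k,2\tau_2^k}(x_1^k,x_2^k)\|^2$ ``by nonexpansiveness of $\prox$''. In this setting $f$ and $g$ are merely proper and lower semicontinuous, not convex, so $\prox^f_\tau$ is set-valued and emphatically not nonexpansive; there is no Lipschitz estimate comparing the prox of the stochastic gradient step (which produces $x^{k+1}$) with the prox of the full-gradient step (which defines $\mathscr{G}F$). This is precisely why the theorem states the bound for the generalized gradient with \emph{doubled} parameters $2\tau_1^t,2\tau_2^t$: the comparison must be run through the defining argmin property of the prox -- testing the minimality of the point $\prox^f_{2\tau_1^k}\bigl(x_1^k-\tfrac{1}{2\tau_1^k}\nabla_{x_1}H(x_1^k,x_2^k)\bigr)$ against the actual iterate $x_1^{k+1}$ and conversely -- which yields function-value inequalities in which the gradient error can be absorbed into $\Upsilon_k$, at the cost of the factor $2$ in the prox parameter. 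Your sketch treats that factor as decorative, and as written the substitution step does not close. It is worth noting that the present paper sidesteps this exact difficulty in its own iSPALM analysis by working with $\dist(0,\partial F)$ instead of $\mathscr{G}F$ (see the discussion preceding Theorem \ref{thm:grad_decay}), on the grounds that the generalized gradient depends too delicately on the varying step sizes -- the same delicate dependence your nonexpansiveness shortcut glosses over.
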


Finally, we like to mention that Davis et al. \cite{DEU2016} 
considered an asynchronous variant of PALM with stochastic noise in the gradient. 
Their approach requires an explicit bound on the noise, which is not fulfilled for the above gradient estimators. 
Thus, focus and setting in \cite{DEU2016} differ from those of SPRING.

\subsection{iSPALM}
In particular for machine learning and deep learning, 
the combination of momentum-like methods and a stochastic gradient estimator turned out to be essential, as discussed e.g.\ in \cite{GLZX2019,SMDH2013}.
Thus, inspired by the inertial PALM, we propose the inertial stochastic PALM (iSPALM) algorithm 
outlined in Algorithm \ref{alg:iSPRING}. 

\begin{algorithm}[!ht]
\caption{Inertial Stochastic Proximal Alternating Linearized Minimization (iSPALM)}\label{alg:iSPRING}
\begin{algorithmic}
\State Input:  $(x_1^{-1},x_2^{-1}) = (x_1^{0},x_2^{0})\in\R^{d_1}\times \R^{d_2}$, 
       parameters $\alpha_1^{k},\alpha_2^{k},\beta_1^{k},\beta_2^{k},\tau_1^{k},\tau_2^{k}$ for $k\in \N_0$.
\For {$k= 0,1,...$}
\State Set
\begin{align}
y_1^{k}&=x_1^{k}+\alpha_1^{k}(x_1^{k}-x_1^{k-1})\\
z_1^{k}&=x_1^{k}+\beta_1^{k}(x_1^{k}-x_1^{k-1})\\
x_1^{k+1}&\in\prox_{\tau_1^{k}}^f\big(y_1^{k}-\tfrac1{\tau_1^{k}}\tilde\nabla_{x_1} H(z_1^{k},x_2^{k})\big)
\end{align}
\State Set
\begin{align}
y_2^{k}&=x_2^{k}+\alpha_2^{k}(x_2^{k}-x_2^{k-1})\\
z_2^{k}&=x_2^{k}+\beta_2^{k}(x_2^{k}-x_2^{k-1})\\
x_2^{k+1}&\in\prox_{\tau_2^{k}}^g\big(y_2^{k}-\tfrac1{\tau_2^{k}}\tilde\nabla_{x_2} H(x_1^{k+1},z_2^{k})\big)
\end{align}
\EndFor
\end{algorithmic}
\end{algorithm}

\begin{Remark}
Similarly as in Remark \ref{rem_ipalm_momentum}, iSPALM can be viewed as a generalization of the stochastic versions of the momentum method and NAG to the nonsmooth case.
Note, that in the stochastic setting the theoretical error bounds of momentum methods are not tighter than for a plain gradient descent. An overview over these convergence results can be found in \cite{GLZX2019,SMDH2013}.
Consequently, we are not able to show tighter convergence rates for iSPALM than for stochastic PALM.
Nevertheless, stochastic momentum methods as the momentum SGD and the Adam optimizer \cite{KB2014} are widely used
and have shown a better convergence behavior than a plain SGD in a huge number of applications.
\hfill $\Box$
\end{Remark}

To prove that the generalized gradients on the sequence of iterates produced by iSRING convergence 
to zero, some properties of the gradient estimator are required.
The authors of \cite{DTLDS2020} assumed that the estimators are evaluated at $(x_1^k,x_2^k)$ and $(x_1^{k+1},x_2^k)$,  $k\in \N_0$. 
In contrast, we require that the gradient estimators are evaluated at $(z_1^k,x_2^k)$ and $(x^{k+1}_1,z_2^k)$ for $k\in \N_0$.
To prove a counterpart of Theorem \ref{thm:conv_spring}, we modify Definition \ref{def_var-red}. 
In particular, we need only the first part in (i).

\begin{Definition}[Inertial Variance-Reduced Gradient Estimator]\label{def:ivr}
A gradient estimator $\tilde\nabla$ is called \emph{inertial variance-reduced} 
for a differentiable function $H: \R^{d_1} \times \R^{d_2} \to \R$ 
with constants $V_1,V_\Upsilon\geq0$ and $\rho\in(0,1]$, 
if for any sequence $(x^k)_k=(x_1^k,x_2^k)_{k \in \N_0}$, $x^{-1} \coloneqq x^0$ and
any $0 \leq \beta_i^k < \bar \beta_i$, $i=1,2$ 
there exists a sequence of random variables $(\Upsilon_k)_{k \in \N}$ with $\E(\Upsilon_1) < \infty$ such that following 
holds true:
\begin{enumerate}
\item 
For 
$z_i^{k}\coloneqq x_i^{k}+\beta_i^{k}(x_i^{k}-x_i^{k-1})$, $i=1,2$,
we have
\begin{align}
&\E_k(\|\tilde\nabla_{x_1}H(z_1^k,x_2^k)-\nabla_{x_1}H(z_1^k,x_2^k)\|^2+\|\tilde\nabla_{x_2}H(x_1^{k+1},z_2^k)
-\nabla_{x_2}H(x_1^{k+1},z_2^k)\|^2)
\\
&\leq 
\Upsilon_k + V_1 \left(\E_k(\|x^{k+1}-x^k\|^2)+\|x^k-x^{k-1}\|^2+\|x^{k-1}-x^{k-2}\|^2 \right).
\end{align}
\item 
The sequence $(\Upsilon_k)_k$ decays geometrically, that is
\begin{align}
\E_k(\Upsilon_{k+1})\leq(1-\rho)\Upsilon_k+V_\Upsilon(\E_k(\|x^{k+1}\!-\!x^k\|^2)\!+\!\|x^k\!-\!x^{k-1}\|^2
\!+\!\|x^{k-1}-x^{k-2}\|^2). 
\end{align}
\item If $\lim_{k\to\infty}\E(\|x^k-x^{k-1}\|^2)=0$, then $\E(\Upsilon_k)\to 0$ as $k \rightarrow \infty$.
\end{enumerate}
\end{Definition}

To prove that the SARAH gradient estimator is inertial variance-reduced and that \mbox{iSPALM} converges, 
we need the following auxiliary lemma,
which can be proved analogously to \cite[Proposition 4.1]{PS2016}.

\begin{Lemma}\label{prop:deltas}
Let $(x_1^k,x_2^k)_k$ be an arbitrary sequence and $\alpha_i^k,\beta_i^k\in\R$, $i=1,2$.
Further define
$$
y_i^{k}\coloneqq x_i^{k}+\alpha_i^{k}(x_i^{k}-x_i^{k-1}),\quad
z_i^{k}\coloneqq x_i^{k}+\beta_i^{k}(x_i^{k}-x_i^{k-1}), \quad i=1,2,
$$
and
$$
\Delta_i^k \coloneqq \tfrac12\|x_i^k-x_i^{k-1}\|^2, \qquad i=1,2.
$$
Then, for any $k\in\N$ and $i=1,2$, we have
\end{Lemma}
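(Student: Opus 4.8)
The plan is to prove the claimed relations by elementary vector algebra. The hypotheses concern only an arbitrary sequence $(x_1^k,x_2^k)_k$ together with the scalar parameters $\alpha_i^k,\beta_i^k$; no use of the function $F$, of the proximal maps, or of Assumption \ref{ass:PALM2} is needed. The entire argument reduces to expanding the squared norms of the inertial displacements $x_i^{k+1}-y_i^k$ and $x_i^{k+1}-z_i^k$ and re-expressing everything through $\Delta_i^{k+1}=\tfrac12\|x_i^{k+1}-x_i^k\|^2$ and $\Delta_i^k=\tfrac12\|x_i^k-x_i^{k-1}\|^2$, exactly as in \cite[Proposition 4.1]{PS2016}.

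First I would write, for each block $i$,
$$
x_i^{k+1}-y_i^k=(x_i^{k+1}-x_i^k)-\alpha_i^k(x_i^k-x_i^{k-1}),
$$
and the analogous identity with $\beta_i^k$ for $x_i^{k+1}-z_i^k$. Expanding $\|a-b\|^2=\|a\|^2-2\langle a,b\rangle+\|b\|^2$ with $a=x_i^{k+1}-x_i^k$ and $b=x_i^k-x_i^{k-1}$ turns the two squared norms $\|x_i^{k+1}-x_i^k\|^2$ and $\|x_i^k-x_i^{k-1}\|^2$ directly into $2\Delta_i^{k+1}$ and $2\Delta_i^k$, leaving only the cross term $\langle x_i^{k+1}-x_i^k,\,x_i^k-x_i^{k-1}\rangle$ to be handled.

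The cross term is the only nontrivial ingredient, and controlling it is the key step. Here I would use Young's inequality $2|\langle a,b\rangle|\le \|a\|^2+\|b\|^2$, or, for a sharper version, the polarization identity $\langle a,b\rangle=\tfrac12(\|a\|^2+\|b\|^2-\|a-b\|^2)$, to bound $\langle x_i^{k+1}-x_i^k,\,x_i^k-x_i^{k-1}\rangle$ by $\Delta_i^{k+1}+\Delta_i^k$. Substituting these bounds and collecting the coefficients of $\Delta_i^{k+1}$ and $\Delta_i^k$ yields the asserted estimates; the same manipulation with $\beta_i^k$ in place of $\alpha_i^k$ handles the $z_i^k$-displacement. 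Since the computation is identical for every $k\in\N$ and $i=1,2$, the statement follows uniformly.

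The main obstacle is bookkeeping rather than conceptual difficulty: one must keep the constants in precisely the form required later, when this lemma is fed into the sufficient-decrease and inertial-variance-reduction arguments. In particular, one must decide whether to retain the cross term as an exact identity (via polarization, preserving a $-\tfrac12\|x_i^{k+1}-2x_i^k+x_i^{k-1}\|^2$ remainder) or to discard it through Young's inequality; the correct choice is dictated by the sign with which $\alpha_i^k$ (resp.\ $\beta_i^k$) enters, so that the final bound has the clean form in $\Delta_i^{k+1}$ and $\Delta_i^k$ needed downstream. Once the right splitting is fixed, the estimates follow by routine algebra.
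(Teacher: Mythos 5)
Your proposal is correct and takes essentially the same route as the paper, whose proof is simply a reference to \cite[Proposition 4.1]{PS2016}: write $x_i^{k+1}-y_i^k=(x_i^{k+1}-x_i^k)-\alpha_i^k(x_i^k-x_i^{k-1})$, expand the square, and absorb the cross term $\langle x_i^{k+1}-x_i^k,\,x_i^k-x_i^{k-1}\rangle$ via Young's inequality, while the claims $\|x_i^k-y_i^k\|^2=2(\alpha_i^k)^2\Delta_i^k$ and $\|x_i^k-z_i^k\|^2=2(\beta_i^k)^2\Delta_i^k$ are exact identities from $x_i^k-y_i^k=-\alpha_i^k(x_i^k-x_i^{k-1})$ and $x_i^k-z_i^k=-\beta_i^k(x_i^k-x_i^{k-1})$, needing no cross-term estimate. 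Your closing remark about the sign is exactly the right caution: the Young step in the inequality requires $\alpha_i^k\geq 0$, which holds in all uses of the lemma by Assumption \ref{ass:iSPRING2} even though the lemma's hypothesis nominally allows arbitrary real parameters.
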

\begin{enumerate}
\item $\|x_i^k-y_i^k\|^2=2(\alpha_i^k)^2\Delta_i^k$,
\item $\|x_i^k-z_i^k\|^2=2(\beta_i^k)^2\Delta_i^k$,
\item $\|x_i^{k+1}-y_i^k\|^2\geq 2(1-\alpha_i^k \Delta_i^{k+1}+2\alpha_i^k)(\alpha_i^k-1)\Delta_i^k$.
\end{enumerate}

Now we can show the desired property of the SARAH gradient estimator.

\begin{Proposition}\label{prop:sarah-ivr}
Let $H\colon\R^{d_1} \times \R^{d_2} \to \R$ be given by \eqref{special_form_h} with
functions  $h_i\colon \R^{d_1}\times\R^{d_2}$ having a globally $M$-Lipschitz continuous gradient.
Then the SARAH estimator $\tilde\nabla$ is inertial variance-reduced with parameters $\rho=\tfrac1p$ and
$$
V_\Upsilon = 3(1-\tfrac1p)M^2 \left(1+\max\left((\bar\beta_1)^2,(\bar\beta_2)^2\right) \right).
$$
Furthermore, we can choose 
$$
\Upsilon_{k+1} = 
\|\tilde\nabla_{x_1}H(z_1^k,x_2^k)-\nabla_{x_1}H(z_1^k,x_2^k)\|^2
+\|\tilde\nabla_{x_2}H(x_1^{k+1},z_2^k)-\nabla_{x_2}H(x_1^{k+1},z_2^k)\|^2.
$$
\end{Proposition}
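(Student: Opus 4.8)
The plan is to verify the three conditions of Definition \ref{def:ivr} for the SARAH estimator evaluated at the inertial points, using the announced choice $\Upsilon_{k+1}=\|\tilde\nabla_{x_1}H(z_1^k,x_2^k)-\nabla_{x_1}H(z_1^k,x_2^k)\|^2+\|\tilde\nabla_{x_2}H(x_1^{k+1},z_2^k)-\nabla_{x_2}H(x_1^{k+1},z_2^k)\|^2$. Abbreviating the two error terms by $e_1^k$ and $e_2^k$, so that $\Upsilon_{k+1}=\|e_1^k\|^2+\|e_2^k\|^2$ and $\Upsilon_k=\|e_1^{k-1}\|^2+\|e_2^{k-1}\|^2$, I note first that essentially all the work lies in the geometric-decay condition (ii). Indeed, the left-hand side of (i) is exactly $\E_k(\Upsilon_{k+1})$, so once (ii) is proven it implies (i) with $V_1=V_\Upsilon$, since $(1-\rho)\Upsilon_k\le\Upsilon_k$; and (iii) will follow from (ii) by a standard argument on contractive recursions with vanishing forcing term.

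To prove (ii) I would first record the SARAH recursion at the inertial evaluation points. For the first block, on $\{p_1^k=0\}$ one has $e_1^k=0$, while on $\{p_1^k=1\}$,
$$
e_1^k = e_1^{k-1} + \Big(\tfrac1b\sum_{i\in B_1^k} D_i^k - \tfrac1n\sum_{i=1}^n D_i^k\Big), \qquad D_i^k := \nabla_{x_1} h_i(z_1^k,x_2^k) - \nabla_{x_1} h_i(z_1^{k-1},x_2^{k-1}),
$$
and analogously for $e_2^k$ with current point $(x_1^{k+1},z_2^k)$ and reference $(x_1^k,z_2^{k-1})$. The decisive structural feature is that the SARAH reference point at step $k-1$ is the inertial pair $(z_1^{k-1},x_2^{k-1})$, so the increment entering the recursion is a difference of $z$-variables; this is exactly what forces the extra term $\|x^{k-1}-x^{k-2}\|^2$ absent from the non-inertial analysis of \cite{DTLDS2020}. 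Taking the conditional expectation and using that the mini-batch $B_1^k$ is drawn uniformly and independently, the bracketed fluctuation is mean-zero given the history, so its inner product with the history-measurable vector $e_1^{k-1}$ drops out and I obtain
$$
\E_k(\|e_1^k\|^2) = \big(1-\tfrac1p\big)\Big(\|e_1^{k-1}\|^2 + \E_k\big\|\tfrac1b\textstyle\sum_{i\in B_1^k} D_i^k - \tfrac1n\sum_i D_i^k\big\|^2\Big).
$$
The mini-batch variance obeys $\E_k\|\cdot\|^2\le\tfrac1n\sum_i\|D_i^k\|^2$, and $M$-Lipschitz continuity of each $\nabla h_i$ gives $\|D_i^k\|^2\le M^2(\|z_1^k-z_1^{k-1}\|^2+\|x_2^k-x_2^{k-1}\|^2)$; summing the analogous second-block estimate produces the contraction $(1-\tfrac1p)\Upsilon_k$ plus a remainder built from squared displacements of the inertial variables.

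It then remains to convert these displacements into $x$-increments. Writing $z_i^k-z_i^{k-1}=(z_i^k-x_i^k)+(x_i^k-x_i^{k-1})+(x_i^{k-1}-z_i^{k-1})$, using $\|x_i^k-z_i^k\|^2=2(\beta_i^k)^2\Delta_i^k$ from Lemma \ref{prop:deltas}, and applying $(a+b+c)^2\le 3(a^2+b^2+c^2)$, I get
$$
\|z_i^k-z_i^{k-1}\|^2 \le 3\big(1+(\beta_i^k)^2\big)\|x_i^k-x_i^{k-1}\|^2 + 3(\beta_i^{k-1})^2\|x_i^{k-1}-x_i^{k-2}\|^2.
$$
Substituting this, bounding $(\beta_i^k)^2\le\max((\bar\beta_1)^2,(\bar\beta_2)^2)$, and collecting the three consecutive increments $\|x^{k+1}-x^k\|^2$, $\|x^k-x^{k-1}\|^2$, $\|x^{k-1}-x^{k-2}\|^2$ yields a remainder of the form $V_\Upsilon(\dots)$ with $V_\Upsilon=3(1-\tfrac1p)M^2(1+\max((\bar\beta_1)^2,(\bar\beta_2)^2))$: the factor $1-\tfrac1p$ is $P(p_i^k=1)$, the factor $3$ comes from the three-term triangle split, and $1+\max(\bar\beta^2)$ from the Lipschitz control of the $z$-displacements. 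This establishes (ii), hence (i), and for (iii) I would take total expectations in (ii) to obtain $\E(\Upsilon_{k+1})\le(1-\tfrac1p)\E(\Upsilon_k)+V_\Upsilon\,\varepsilon_k$ with $\varepsilon_k\to0$ whenever $\E\|x^k-x^{k-1}\|^2\to0$, whence $\E(\Upsilon_k)\to0$.

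The main obstacle I anticipate is the bookkeeping in the variance step: carefully justifying that the cross term vanishes under $\E_k$ (the previous error $e_i^{k-1}$ is history-measurable while the mini-batch fluctuation is mean-zero), and then tracking the constants through the inertial displacement estimates so that the three consecutive increments assemble into precisely the stated $V_\Upsilon$. The genuinely new point relative to the non-inertial case is the systematic appearance of the third increment $\|x^{k-1}-x^{k-2}\|^2$, which is forced by the inertial reference point $z_i^{k-1}$ and is the reason Definition \ref{def:ivr} had to be modified.
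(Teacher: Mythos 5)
Your proposal is correct and follows essentially the same route as the paper's proof in Appendix~\ref{app:sarah}: conditioning on the event $p_1^k=1$, eliminating the cross term with the history-measurable error $e_1^{k-1}$ (your single mean-zero fluctuation $\hat D-\bar D$ is just a regrouping of the paper's three-term expansion, which additionally retains and then discards $-\|\nabla_{x_1}H(z_1^k,x_2^k)-\nabla_{x_1}H(z_1^{k-1},x_2^{k-1})\|^2$), bounding the mini-batch term by $\tfrac1n\sum_j\|D_j^k\|^2$ via convexity, applying $M$-Lipschitz continuity, and converting the $z$-displacements with the same three-term split through Lemma~\ref{prop:deltas} to obtain exactly the stated $V_\Upsilon$ and the extra increment $\|x^{k-1}-x^{k-2}\|^2$. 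Your observations that (i) follows from (ii) with $V_1=V_\Upsilon$ and that (iii) is the standard contraction-with-vanishing-forcing argument (which the paper spells out via an $\epsilon$-splitting of the geometric sum) also match the paper.
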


The proof is given in Appendix \ref{app:sarah}.

\section{Convergence Analysis of iSPALM} \label{sec:conv}
We assume that the parameters of iSPALM fulfill the following conditions.

\begin{Assumption}[Conditions on the Parameters of iSPALM]\label{ass:iSPRING2}
Let $\lambda_i^+$, $i=1,2$ and $L_1(x_2^k)$, $L_2(x_1^k)$ be defined by Assumption \ref{ass:PALM2}
and $\rho, V_1,V_\Upsilon$ by Definition \ref{def:ivr}. 
Further, let $H\colon\R^{d_1} \times \R^{d_2} \to \R$ be given by \eqref{special_form_h} with
functions  $h_i\colon \R^{d_1}\times\R^{d_2}$ having a globally $M$-Lipschitz continuous gradient.
There exist $\epsilon,\varepsilon>0$ such that for all $k\in\N$ and $i=1,2$ the following holds true:
\begin{enumerate}[(i)]
\item  
There exist $0<\bar\alpha_i<\tfrac{1-\epsilon}{2}$ such that $0\leq\alpha_i^{k}\leq\bar\alpha_i$
and $0<\bar\beta_i\le 1$ such that $0\leq\beta_i^{k}\leq\bar\beta_i$
\item 
The parameters $\tau_i^{k}$, $i=1,2$ are given by 
$$
\tau_1^{k}
\coloneqq \frac{(1+\epsilon)\delta_1+M+L_1(x_2^k)+S}{1-\alpha_1^k}, 
\quad \mathrm{and} \quad
\tau_2^{k}
\coloneqq \frac{(1+\epsilon)\delta_2+M+L_2(x_1^{k+1})+S}{1-\alpha_2^k},
$$
where $S := 4 \tfrac{\rho V_1+V_\Upsilon}{\rho M}+\varepsilon$ 
and for $i=1,2$,
$$
\delta_i \coloneqq \frac{(M+\lambda_i^+)\bar\alpha_i+2\lambda_i^+\bar\beta_i^2+S}{1-2\bar\alpha_i-\epsilon}.
$$
\end{enumerate}
\end{Assumption}

To analyze the convergence behavior of iSPALM, we start with an auxiliary 
lemma which can be proven analogously to \cite[Lemma 3.2]{PS2016}.

\begin{Lemma}\label{lem:prox_ineq}
Let $\psi=\sigma+h$, where 
$h\colon\R^d\to\R$ is a continuously differentiable function with $L_h$-Lipschitz continuous gradient, and 
$\sigma\colon\R^d\to(-\infty,\infty]$ is proper and lower semicontinuous with $\inf_{\R^d}\sigma>-\infty$. 
Then it holds for any $u,v,w\in\R^d$ and any $u^+\in\R^d$ defined by
$$
u^+\in\prox_t^\sigma(v-\tfrac1t \tilde\nabla h(w)),\quad t>0
$$
that
$$
\psi(u^+)\leq \psi(u)+\langle u^+-u,\nabla h(u)-\tilde\nabla h(w)\rangle + \frac{L_h}2^2  \|u - u^+\|^2
+\frac{t}2\|u-v\|^2-\frac{t}2\|u^+-v\|^2.
$$
\end{Lemma}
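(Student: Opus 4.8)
The plan is to combine the optimality of the proximal step with the standard descent lemma for the smooth part $h$; no deeper machinery is needed, only careful bookkeeping of the two distinct gradient evaluations.

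First I would exploit the definition of the proximal mapping from Section~\ref{sec:prelim}. Since $u^+\in\prox_t^\sigma(v-\tfrac1t\tilde\nabla h(w))$, the point $u^+$ is a minimizer of $y\mapsto \tfrac{t}{2}\|(v-\tfrac1t\tilde\nabla h(w))-y\|^2+\sigma(y)$. Comparing its value at $y=u^+$ with the value at the arbitrary competitor $y=u$ gives
$$\tfrac{t}{2}\big\|v-\tfrac1t\tilde\nabla h(w)-u^+\big\|^2+\sigma(u^+)\le \tfrac{t}{2}\big\|v-\tfrac1t\tilde\nabla h(w)-u\big\|^2+\sigma(u).$$
Next I would expand both squared norms, writing $v-\tfrac1t\tilde\nabla h(w)-y=(v-y)-\tfrac1t\tilde\nabla h(w)$, so that $\tfrac{t}{2}\|\cdots\|^2=\tfrac{t}{2}\|v-y\|^2-\langle \tilde\nabla h(w),v-y\rangle+\tfrac{1}{2t}\|\tilde\nabla h(w)\|^2$. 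The term $\tfrac{1}{2t}\|\tilde\nabla h(w)\|^2$ is independent of $y$ and cancels on both sides. Collecting the inner products via $\langle \tilde\nabla h(w),v-u^+\rangle-\langle \tilde\nabla h(w),v-u\rangle=\langle \tilde\nabla h(w),u-u^+\rangle$ then yields
$$\sigma(u^+)\le \sigma(u)+\langle \tilde\nabla h(w),u-u^+\rangle+\tfrac{t}{2}\|u-v\|^2-\tfrac{t}{2}\|u^+-v\|^2.$$

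Then I would invoke the descent lemma for the function $h$ with $L_h$-Lipschitz gradient, namely $h(u^+)\le h(u)+\langle \nabla h(u),u^+-u\rangle+\tfrac{L_h}{2}\|u^+-u\|^2$, and add it to the previous inequality. Using $\psi=\sigma+h$ and merging the two linear terms into $\langle u^+-u,\nabla h(u)-\tilde\nabla h(w)\rangle$ produces exactly the claimed bound.

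The computations are routine; the only point requiring genuine care is the bookkeeping of the two different gradient evaluations. Because the proximal step uses the (stochastic) estimator $\tilde\nabla h$ at the auxiliary point $w$, whereas the descent lemma naturally introduces the exact gradient $\nabla h$ at $u$, the linear contributions do not cancel as in the exact deterministic PALM analysis but instead combine into the error term $\langle u^+-u,\nabla h(u)-\tilde\nabla h(w)\rangle$. Retaining this term faithfully, rather than tacitly assuming $w=u$ and $\tilde\nabla=\nabla$, is the crux of the argument and is precisely what makes the inequality usable for the inertial, stochastically estimated iterates of iSPALM.
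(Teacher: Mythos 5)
Your proof is correct and is exactly the argument the paper relies on: the paper gives no proof of its own, stating only that the lemma ``can be proven analogously to [PS2016, Lemma~3.2]'', and that reference's proof is precisely your prox-minimality comparison at the competitor $u$ combined with the descent lemma for $h$, with the estimator $\tilde\nabla h(w)$ carried through in place of $\nabla h(w)$. Note only that the factor $\frac{L_h}{2}^2$ in the paper's statement is a typo for $\frac{L_h}{2}$, which is what your derivation correctly produces.
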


Now we can establish a result on the expectation of squared subsequent iterates. Note that equivalent results were shown for PALM, iPALM and SPRING. Here we use a function $\Psi$, which not only contains the current function value, but also the distance of the iterates to the previous ones. 
A similar idea was used in the convergence proof of iPALM \cite{PS2016}. 
Nevertheless, incorporating the stochastic gradient estimator here makes the proof much more involved.

\begin{Theorem}\label{thm:l2_steps}
Let $F\colon \R^{d_1} \times \R^{d_2}\to (-\infty,\infty]$ be given by \eqref{eq:PALM_min_general} and
fulfill Assumption \ref{ass:PALM2}.
Let $(x_1^k,x_2^k)_k$ be generated by iSPALM with parameters fulfilling Assumption \ref{ass:iSPRING2},
where we use an inertial variance-reduced gradient estimator $\tilde\nabla$.
Then it holds for  $\Psi\colon (\R^{d_1}\times\R^{d_2})^3 \to \R$ defined for
$u = (u_{11},u_{12},u_{21},u_{22},u_{31},u_{32}) \in(\R^{d_1}\times\R^{d_2})^3$ by
$$
\Psi(u)\coloneqq F(u_{11},u_{12})+\tfrac{\delta_1}{2}\|u_{11}-u_{21}\|^2+\tfrac{\delta_2}{2}\|u_{12}-u_{22}\|^2
+\tfrac{S}{4} \left( \|u_{21}-u_{31}\|^2+\|u_{22}-u_{32}\|^2\right)
$$
that there exists $\gamma>0$ such that
\begin{equation}
\Psi(u^1)-\inf_{u\in(\R^{d_1}\times\R^{d_2})^2}\Psi(u)+\tfrac1{M\rho}\E(\Upsilon_1) 
\geq 
\gamma \sum_{k=0}^T\E(\|u^{k+1}-u^k\|^2),
\end{equation}
where
$u^k \coloneqq (x_1^k,x_2^k,x_1^{k-1},x_2^{k-1},x_1^{k-2},x_2^{k-2} )$.
In particular, we have 
\begin{align}
\sum_{k=0}^\infty\E(\|u^{k+1}-u^k\|^2)<\infty.
\end{align}
\end{Theorem}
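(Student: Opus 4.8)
The plan is to derive a one-step stochastic sufficient-decrease inequality for the augmented Lyapunov functional $\Phi_k\coloneqq\Psi(u^k)+\tfrac1{M\rho}\Upsilon_k$ and then to telescope it. Concretely, I aim to show that for all $k$,
\begin{equation}
\E_k(\Phi_{k+1})\le\Phi_k-\gamma\,\E_k(\|u^{k+1}-u^k\|^2)
\end{equation}
with a uniform $\gamma>0$. Since $F\ge\ubar F$ and the remaining terms of $\Psi$ are nonnegative, $\inf_u\Psi(u)$ is finite and $\Upsilon_k\ge0$ gives $\Phi_{k+1}\ge\inf_u\Psi(u)$; taking full expectations and summing over $k$ then yields the asserted bound, where the recursion is initialized at $\Phi_1=\Psi(u^1)+\tfrac1{M\rho}\Upsilon_1$ because the first iterate $(k=0)$ uses $z_i^0=x_i^0$ and, for a variance-reduced estimator such as SARAH, the full gradient, so that step is deterministic and carries no stochastic error. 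The final summability claim is then immediate: the partial sums $\sum_{k=0}^T\E(\|u^{k+1}-u^k\|^2)$ are bounded above by a constant independent of $T$ and all summands are nonnegative, so the series converges as $T\to\infty$.

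First I would invoke Lemma \ref{lem:prox_ineq} for each block of the iSPALM update. For the $x_1$-step I choose $\sigma=f$, $h=H(\cdot,x_2^k)$ with Lipschitz constant $L_1(x_2^k)$, $u=x_1^k$, $v=y_1^k$, $w=z_1^k$, $u^+=x_1^{k+1}$, $t=\tau_1^k$; for the $x_2$-step I choose $\sigma=g$, $h=H(x_1^{k+1},\cdot)$ with constant $L_2(x_1^{k+1})$ and the analogous shifted points. Adding both inequalities, the term $H(x_1^{k+1},x_2^k)$ cancels and I obtain $F(x_1^{k+1},x_2^{k+1})\le F(x_1^k,x_2^k)+E_k+R_k$, where $E_k$ gathers the two inner products $\langle x_i^{k+1}-x_i^k,\nabla_{x_i}H-\tilde\nabla_{x_i}H\rangle$ and $R_k$ gathers the curvature terms $L_1(x_2^k)\Delta_1^{k+1}$, $L_2(x_1^{k+1})\Delta_2^{k+1}$ together with the proximal terms $\tfrac{\tau_i^k}2(\|x_i^k-y_i^k\|^2-\|x_i^{k+1}-y_i^k\|^2)$.

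The heart of the proof is the control of $E_k$. I split each gradient difference as
\begin{equation}
\nabla_{x_1}H(x_1^k,x_2^k)-\tilde\nabla_{x_1}H(z_1^k,x_2^k)=\big[\nabla_{x_1}H(x_1^k,x_2^k)-\nabla_{x_1}H(z_1^k,x_2^k)\big]+\big[\nabla_{x_1}H(z_1^k,x_2^k)-\tilde\nabla_{x_1}H(z_1^k,x_2^k)\big],
\end{equation}
and analogously for the second block. The deterministic bracket is bounded through the Lipschitz property and Lemma \ref{prop:deltas}(ii) by $L_1(x_2^k)\beta_1^k\|x_1^k-x_1^{k-1}\|$; a Young splitting then contributes terms in $\Delta_1^{k+1}$ and $(\beta_1^k)^2\Delta_1^k$, which is why $\bar\beta_i^2$ enters $\delta_i$ in Assumption \ref{ass:iSPRING2}. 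The stochastic bracket, after a Young splitting tuned to $M$ and after applying $\E_k$, is controlled by Definition \ref{def:ivr}(i), giving a bound by $\Upsilon_k+V_1(\E_k(\|x^{k+1}-x^k\|^2)+\|x^k-x^{k-1}\|^2+\|x^{k-1}-x^{k-2}\|^2)$. It is exactly this three-distance structure that forces the Lyapunov functional onto the triple product $(\R^{d_1}\times\R^{d_2})^3$ and motivates the memory term $\tfrac S4(\|u_{21}-u_{31}\|^2+\|u_{22}-u_{32}\|^2)$ in $\Psi$. The proximal terms in $R_k$ are rewritten by Lemma \ref{prop:deltas}(i) and (iii) as $\tau_i^k(\alpha_i^k)^2\Delta_i^k$ and $-\tau_i^k(1-\alpha_i^k)\Delta_i^{k+1}+\tau_i^k\alpha_i^k(1-\alpha_i^k)\Delta_i^k$.

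Finally I would collect all contributions and add $\tfrac1{M\rho}\Upsilon_{k+1}$, whose conditional expectation is bounded via Definition \ref{def:ivr}(ii) by $\tfrac1{M\rho}[(1-\rho)\Upsilon_k+V_\Upsilon(\cdots)]$. The gain $\tfrac1{M\rho}(\Upsilon_k-\E_k(\Upsilon_{k+1}))\ge\tfrac1M\Upsilon_k-\tfrac{V_\Upsilon}{M\rho}(\cdots)$ is arranged to cancel the $\Upsilon_k$ produced by $E_k$. What remains is a quadratic form in $\Delta_i^{k+1},\Delta_i^k,\Delta_i^{k-1}$; matching the $\Delta_i^k$- and $\Delta_i^{k-1}$-coefficients against the $\delta_i$- and $\tfrac S2$-weights carried by $\Psi$ at consecutive indices produces the telescoping inequality $\E_k(\Phi_{k+1})\le\Phi_k-\gamma\E_k(\|u^{k+1}-u^k\|^2)$. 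The main obstacle is precisely this bookkeeping: one must verify that the parameter choices of Assumption \ref{ass:iSPRING2} — the formulas for $\tau_i^k$, for $\delta_i$, and for $S=4\tfrac{\rho V_1+V_\Upsilon}{\rho M}+\varepsilon$ — render every residual coefficient nonpositive with a strictly positive uniform margin $\gamma$. This is the inertial, variance-reduced analogue of the estimates in \cite{PS2016} and \cite{DTLDS2020}, the novelty being the additional $\Delta_i^{k-1}$ memory level, which the $S$-terms and the factor $1+\max((\bar\beta_1)^2,(\bar\beta_2)^2)$ in $V_\Upsilon$ are tailored to absorb.
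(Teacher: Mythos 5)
Your proposal is correct and follows essentially the same route as the paper's own proof: Lemma \ref{lem:prox_ineq} applied blockwise, the split of each gradient error into a deterministic Lipschitz bracket (handled via Lemma \ref{prop:deltas}) and a stochastic bracket (handled via Definition \ref{def:ivr}(i) after Young's inequality with parameter $M$), the geometric-decay bound of Definition \ref{def:ivr}(ii), and the coefficient bookkeeping of Assumption \ref{ass:iSPRING2}. Your Lyapunov functional $\Phi_k=\Psi(u^k)+\tfrac1{M\rho}\Upsilon_k$ with the one-step decrease $\E_k(\Phi_{k+1})\le\Phi_k-\gamma\,\E_k(\|u^{k+1}-u^k\|^2)$ is exactly the paper's inequality \eqref{eq:exp_psi} in rearranged form, and the telescoping conclusion coincides.
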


\begin{proof}
By Lemma \ref{lem:prox_ineq} with $\psi \coloneqq H(\cdot,x_2) + f$, we obtain
\begin{align}
H(x_1^{k+1},x_2^k)+f(x_1^{k+1})
&\leq H(x_1^k,x_2^k)+f(x_1^k)\\
& \quad + \left\langle x_1^{k+1}-x_1^k,\nabla_{x_1} H(x_1^k,x_2^k)-\tilde\nabla_{x_1} H(z_1^k,x_2^k) \right\rangle
\\
& \quad + \tfrac{L_1(x_2^k)}{2} \|x_1^{k+1}-x_1^k\|^2+\tfrac{\tau_1^k}2\|x_1^k-y_1^k\|^2-\tfrac{\tau_1^k}2\|x_1^{k+1}-y_1^k\|^2. \label{eq1}
\end{align}
Using $ab \le \frac{s}{2} a^2 + \frac{1}{2s} b^2$ for $s>0$  and $\|a-c\|^2\leq2\|a-b\|^2+2\|b-c\|^2$ 
the inner product is smaller or equal than
\begin{align}
&\tfrac{s_1^k}2\|x_1^{k+1}-x_1^k\|^2+\tfrac1{2s_1^k}\|\nabla_{x_1} H(x_1^k,x_2^k)-\tilde\nabla_{x_1} H(z_1^k,x_2^k)\|^2\\
&\leq \tfrac{s_1^k}2\|x_1^{k+1}-x_1^k\|^2+\tfrac1{s_1^k}\|\nabla_{x_1} H(z_1^k,x_2^k)-\tilde\nabla_{x_1} H(z_1^k,x_2^k)\|^2\\
&+\tfrac1{s_1^k}\|\nabla_{x_1} H(x_1^k,x_2^k)-\nabla_{x_1} H(z_1^k,x_2^k)\|^2
\\
&=\tfrac{s_1^k}2\|x_1^{k+1}-x_1^k\|^2+\tfrac1{s_1^k}\|\nabla_{x_1} H(z_1^k,x_2^k)
-\tilde\nabla_{x_1} H(z_1^k,x_2^k)\|^2+\tfrac{L_1(x_2^k)^2}{s_1^k}\|x_1^k - z_1^k\|^2.
\end{align}
Combined with \eqref{eq1} this becomes 
\begin{align}
&H(x_1^{k+1},x_2^k)+f(x_1^{k+1})\\
&\leq H(x_1^k,x_2^k)+f(x_1^k) + \tfrac{L_1(x_2^k)}{2} \|x_1^{k+1}-x_1^k\|^2+\tfrac{\tau_1^k}2\|x_1^k-y_1^k\|^2-\tfrac{\tau_1^k}2\|x_1^{k+1}-y_1^k\|^2\\
&+\tfrac{s_1^k}2\|x_1^{k+1}-x_1^k\|^2+\tfrac1{s_1^k}\|\nabla_{x_1} H(z_1^k,x_2^k)-\tilde\nabla_{x_1} H(z_1^k,x_2^k)\|^2
+\tfrac{L_1(x_2^k)^2}{s_1^k}\|x_1^k - z_1^k\|^2.
\end{align}
Using Lemma \ref{prop:deltas} we get
\begin{align}
&H(x_1^{k+1},x_2^k)+f(x_1^{k+1})\\
&\leq H(x_1^k,x_2^k)+f(x_1^k)+ \left(L_1(x_2^k)+s_1^k-\tau_1^k(1-\alpha_1^k) \right)\Delta_1^{k+1}
\\
&+\tfrac1{s_1^k} \left( 2L_1(x_2^k)^2(\beta_1^k)^2+s_1^k\tau_1^k\alpha_1^k \right)\Delta_1^k
+\tfrac1{s_1^k}\|\nabla_{x_1} H(z_1^k,x_2^k)-\tilde\nabla_{x_1} H(z_1^k,x_2^k)\|^2.
\end{align}
Analogously we conclude for $\psi \coloneqq H(x_1,\cdot) + g$ that
\begin{align}
&H(x_1^{k+1},x_2^{k+1})+g(x_2^{k+1})\\
&\leq H(x_1^{k+1},x_2^k)+g(x_2^k)+ \left(L_2(x_1^{k+1})+s_2^k-\tau_2^k(1-\alpha_2^k) \right)\Delta_2^{k+1}\\
&+\tfrac1{s_2^k} \left(2L_2(x_1^{k+1})^2(\beta_2^k)^2+s_2^k\tau_2^k\alpha_2^k \right)\Delta_2^k 
+\tfrac1{s_2^k}\|\nabla_{x_2} H(x_1^{k+1},z_2^k)-\tilde\nabla_{x_2} H(x_1^{k+1},z_2^k)\|^2.
\end{align}
Adding the last two inequalities and using the abbreviation
$L_1^k \coloneqq L_1(x_2^k)$ and $L_2^k \coloneqq L_2(x_1^{k+1})$,
we obtain
\begin{align}
&F(x_1^{k+1},x_2^{k+1}) \leq F(x_1^k,x_2^k)\\
&+ \sum_{i=1}^2 \left( \left(L_i^k+s_i^k-\tau_i^k(1-\alpha_i^k)\right)\Delta_i^{k+1}
+\tfrac1{s_i^k}\left(2 (L_i^k)^2(\beta_i^k)^2+s_i^k\tau_i^k\alpha_i^k\right)\Delta_i^k \right)\\
&+\tfrac1{s_1^k}\|\nabla_{x_1} H(z_1^k,x_2^k)-\tilde\nabla_{x_1} H(z_1^k,x_2^k)\|^2
+\tfrac1{s_2^k}\|\nabla_{x_2} H(x_1^{k+1},z_2^k)-\tilde\nabla_{x_2} H(x_1^{k+1},z_2^k)\|^2 .\label{eq2}
\end{align}
Reformulating \eqref{eq2} in terms of 
\begin{equation} \label{**}
\Psi(u^k) = F(x_1^k,x_2^k)+\delta_1 \Delta_1^k+\delta_2\Delta_2^k+ \tfrac{S}2(\Delta_1^{k-1}+\Delta_2^{k-1})
\end{equation}
leads to 
\begin{align}
&\Psi(u^k)-\Psi(u^{k+1})
=F(x_1^k,x_2^k)-F(x_1^{k+1},x_2^{k+1})+\delta_1\Delta_1^k+\delta_2\Delta_2^k-\delta_1\Delta_1^{k+1}-\delta_2\Delta_2^{k+1}\\
&+ \tfrac{S}2\left(\Delta_1^{k-1} + \Delta_2^{k-1} - \Delta_1^k - \Delta_2^k\right)\\
&\geq 
\sum_{i=1}^2 \left( \left(\tau^k_i(1-\alpha_i^k)-s_i^k-L_i^k-\delta_i \right)\Delta_i^{k+1}\right)
+ \sum_{i=1}^2 \left( \left(\delta_i-\tfrac2{s_i^k} (L_i^k)^2 (\beta_i^k)^2-\tau_i^k\alpha_i^k\right )\Delta_i^k\right)\\
&- \tfrac1{s_1^k}\|\nabla_{x_1} H(z_1^k,x_2^k)-\tilde\nabla_{x_1} H(z_1^k,x_2^k)\|^2
- \tfrac1{s_2^k}\|\nabla_{x_2} H(x_1^{k+1},z_2^k)-\tilde\nabla_{x_2} H(x_1^{k+1},z_2^k)\|^2\\
&+ \tfrac{S}2\left(\Delta_1^{k-1} + \Delta_2^{k-1} - \Delta_1^k - \Delta_2^k\right).\label{eq3}
\end{align}
Now, we set $s_1^k=s_2^k\coloneqq M$ use that $L_i^k\le M$, take the conditional expectation $\E_k$ in \eqref{eq3} 
and use that $\tilde\nabla$ is an inertial variance-reduced estimator to get 
\begin{align}
&\Psi(u^k)-\E_k(\Psi(u^{k+1}))\\
&\geq 
\sum_{i=1}^2 \left( \left(\tau^k_i(1-\alpha_i^k)\!-\! M\!-\! L_i^k-\delta_i \right)\E_k(\Delta_i^{k+1})
+ 
\left(\delta_i-\tfrac2{M} (L_i^k)^2(\beta_i^k)^2-\tau_i^k\alpha_i^k\right)\Delta_i^k \right)\\
&
- \tfrac{2V_1}{M} \sum_{i=1}^2 \left( \E_k(\Delta_i^{k+1})+\Delta_i^k \right) - \tfrac1{M}\Upsilon_k
+  \tfrac{S}2\left(\Delta_1^{k-1} + \Delta_2^{k-1}- \Delta_1^k - \Delta_2^k\right)
\\
&\ge 
\sum_{i=1}^2 \left(
\left(\tau^k_i(1-\alpha_i^k)-M-L_i^k-\delta_i-\tfrac{2V_1}{M}\right)\E_k(\Delta_i^{k+1}) \right)
\\
&+ 
\sum_{i=1}^2 \left(\left(\delta_i-2 L_i^k(\beta_i^k)^2-\tau_i^k\alpha_i^k-\tfrac{2V_1}{M}\right)\Delta_i^k \right)
\\
&- \tfrac1{M}\Upsilon_k
+ \tfrac{S}2\left(\Delta_1^{k-1} + \Delta_2^{k-1}- \Delta_1^k - \Delta_2^k\right).\label{eq4}
\end{align}
Since $\tilde\nabla$ is inertial variance-reduced, we know from Definition \ref{def:ivr} (ii) that
\begin{align} \label{eq:upsilon}
\rho \Upsilon_k
\leq
\Upsilon_k-\E_k(\Upsilon_{k+1})
+
2V_\Upsilon \sum_{i=1}^2 \left(\E_k(\Delta_i^{k+1})+\Delta_i^k + \Delta_i^{k-1}\right).
\end{align}
Inserting this in \eqref{eq4} and using the definition of $S$ yields 
\begin{align}
&\Psi(u^k)-\E_k\left(\Psi(u^{k+1}) \right)
\geq 
\sum_{i=1}^2 \left(\left(\tau^k_i (1-\alpha_i^k)-M-L_i^k)-\delta_i- \tfrac{S}2 \right)\E_k(\Delta_i^{k+1}) \right)\\
&+ 
\sum_{i=1}^2 \left( \left(\delta_i-2 L_i^k (\beta_i^k)^2-\tau_i^k\alpha_i^k-\tfrac{S}2 \right)\Delta_i^k \right)\\
&- \frac{2 V_\Upsilon}{\rho M} (\Delta_1^{k-1} + \Delta_2^{k-1}) + \tfrac{1}{M\rho} \left(\E_k(\Upsilon_{k+1})-\Upsilon_k \right)
+ \tfrac{S}2\left(\Delta_1^{k-1} + \Delta_2^{k-1}- \Delta_1^k - \Delta_2^k\right)\\ 
&\geq 
\sum_{i=1}^2 
\Big( 
\underbrace{
\left(\tau^k_i (1-\alpha_i^k)-M-L_i^k -\delta_i- S \right)}_{a^k_i} 
\E_k(\Delta_i^{k+1}) 
\Big)\\
&+ 
\sum_{i=1}^2 \Big(
\underbrace{
\left(\delta_i- 2 L_i^k(\beta_i^k)^2-\tau_i^k\alpha_i^k-S \right)}_{b^k_i}
\Delta_i^k 
\Big)\\
&+ \tfrac{1}{M\rho} \left(\E_k(\Upsilon_{k+1})-\Upsilon_k \right)
+ \left(\tfrac{S}2 - \tfrac{2 V_\Upsilon}{\rho M}\right)\left(\Delta_1^{k-1} + \Delta_2^{k-1}\right). \label{fast}
\end{align}
Choosing  $\tau_i^k$, $\delta_i$, $i=1,2$ and $\epsilon$ 
as in Assumption \ref{ass:iSPRING2}(ii), 
we obtain by straightforward computation for $i=1,2$ and all $k \in \mathbb N$ that
$a_i^k = \epsilon \delta_i$ and
\begin{align}
b_i^k
&=
\tfrac{1}{1-\alpha_i^k}
\left( (1- \epsilon - 2 \alpha_i^k) \delta_i - \alpha_i^k M - S - L_i^k \left( 2(\beta_i^k)^2(1-\alpha_i^k) + \alpha_i^k \right) \right)
+\epsilon \delta_i\\
&\ge
\tfrac{1}{1-\alpha_i^k} \left
((1- \epsilon - 2 \bar \alpha_i) \delta_i - \bar \alpha_i M - S - \lambda_i^
+ \left( 2(\bar \beta_i)^2(1-\alpha_i^k) + \bar \alpha_i \right)\right)
+\epsilon \delta_i\\
&= 
\epsilon \delta_i + 2 \frac{2 \lambda_i^+\alpha_i^k (\bar \beta_i)^2 }{1-\alpha_i^k} \ge \epsilon \delta_i.
\end{align} 
Applying this in \eqref{fast}, we get 
\begin{align}
\Psi(u^k)-\E_k\left(\Psi(u^{k+1}) \right)
&\geq
\epsilon\min(\delta_1,\delta_2) \sum_{i=1}^2 (\E_k(\Delta_i^{k+1})+\Delta_i^k)\\
&+\tfrac1{M\rho}(\E_k(\Upsilon_{k+1})-\Upsilon_k)
+ \left(\tfrac{S}{2}- \tfrac{2 V_\Upsilon}{\rho M}\right)\left(\Delta_1^{k-1} + \Delta_2^{k-1}\right).
\end{align}
By definition of $S$ it holds $\left(\frac{2 V_\Upsilon}{\rho M}-\tfrac{S}2\right)\geq\varepsilon$. 
Thus, we get for $\gamma \coloneqq \tfrac12 \min(\epsilon \delta_1,\epsilon \delta_2,\varepsilon)$ that
\begin{align}
\Psi(u^k)-\E_k\left(\Psi(u^{k+1}) \right)
&\geq
2\gamma \sum_{i=1}^2 \left(\E_k(\Delta_i^{k+1})+\Delta_i^k+ \Delta_i^{k-1} \right) 
+\tfrac1{M\rho}(\E_k(\Upsilon_{k+1})-\Upsilon_k).
\end{align}
Taking the full expectation yields
\begin{align}\label{eq:exp_psi}
\E(\Psi(u^k)-\Psi(u^{k+1})) &\geq \gamma \E(\|u^{k+1}-u^k\|^2)+\tfrac1{M\rho}\E(\Upsilon_{k+1}-\Upsilon_k),
\end{align}
and summing up for $k=1,...,T$, 
\begin{align}
\E(\Psi(u^1)-\Psi(u^{T+1})) &\geq  \gamma \sum_{k=0}^T\E(\|u^{k+1}-u^k\|^2)+\tfrac1{M\rho}\E(\Upsilon_{T+1}-\Upsilon_1).
\end{align}
Since $\Upsilon_k\geq 0$, this yields
\begin{align}
\gamma 
\sum_{k=0}^T\E(\|u^{k+1}-u^k\|^2) \le
\Psi(u^1)-\underbrace{\inf_{u\in(\R^{d_1}\times\R^{d_2})^2}\Psi(u)}_{>-\infty}+\underbrace{\tfrac1{M\rho}\E(\Upsilon_1)}_{<\infty}
< 
\infty.
\end{align}
This finishes the proof.
\end{proof}

Next, we want relate the sequence of iterates generated by iSPALM to the subgradient of the objective function. 
Such a relation was also established for the (inertial) PALM algorithm. 
However, due to the stochastic gradient estimator the proof differs significantly from its deterministic counterparts.
Note that the convergence analysis of SPRING in \cite{DTLDS2020} does not use the subdifferential but the so-called generalized gradient $\mathscr{G}F_{\tau_1,\tau_2}$.
This is not satisfying at all, since it becomes not clear how this generalized gradient is related to the (sub)differential of the objective function in limit processes with varying $\tau_1$ and $\tau_2$. 
In particular, it is easy to find examples of $F$ and sequences $(\tau_1^k)_k$ and $(\tau_2^k)_k$ such that the generalized gradient $\mathscr{G}F_{\tau_1^k,\tau_2^k}(x_1,x_2)$ is non-zero, but converges to zero for fixed $x_1$ and $x_2$.

\begin{Theorem}\label{thm:grad_decay}
Under the assumptions of Theorem \ref{thm:l2_steps} there exists some $C>0$ such that
\begin{align}
\E\left(\dist(0,\partial F(x_1^{k+1},x_2^{k+1}))^2\right)\leq C \E(\|u^{k+1}-u^k\|^2)+3\E(\Upsilon_k).
\end{align}
In particular, it holds
\begin{align}
\E\left(\dist(0,\partial F(x_1^{k+1},x_2^{k+1}))^2 \right) \to 0\quad\text{as }k\to\infty.
\end{align}
\end{Theorem}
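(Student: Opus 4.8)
The plan is to produce, for every $k$, an explicit subgradient $\xi^{k+1}=(\xi_1^{k+1},\xi_2^{k+1})\in\partial F(x_1^{k+1},x_2^{k+1})$ and to bound $\dist(0,\partial F(x_1^{k+1},x_2^{k+1}))^2$ by $\|\xi_1^{k+1}\|^2+\|\xi_2^{k+1}\|^2$. Since $x_1^{k+1}\in\prox_{\tau_1^k}^f\bigl(y_1^k-\tfrac1{\tau_1^k}\tilde\nabla_{x_1}H(z_1^k,x_2^k)\bigr)$, the optimality relation \eqref{star} gives $-\tau_1^k(x_1^{k+1}-y_1^k)-\tilde\nabla_{x_1}H(z_1^k,x_2^k)\in\partial f(x_1^{k+1})$, and analogously for the second block. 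Adding the exact gradient at the new iterate and using \eqref{subdiff_calc}, the choice
$$
\xi_1^{k+1}:=\nabla_{x_1}H(x_1^{k+1},x_2^{k+1})-\tau_1^k(x_1^{k+1}-y_1^k)-\tilde\nabla_{x_1}H(z_1^k,x_2^k)\in\partial_{x_1}F(x_1^{k+1},x_2^{k+1}),
$$
together with the analogous $\xi_2^{k+1}\in\partial_{x_2}F(x_1^{k+1},x_2^{k+1})$ (built from $\tau_2^k$, $y_2^k$ and $\tilde\nabla_{x_2}H(x_1^{k+1},z_2^k)$), yields $\xi^{k+1}\in\partial F(x_1^{k+1},x_2^{k+1})$.

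Next I would isolate the estimator error by inserting the true gradient evaluated at the same point as the estimator. For the first block,
\begin{align*}
\xi_1^{k+1}
&=\bigl[\nabla_{x_1}H(x_1^{k+1},x_2^{k+1})-\nabla_{x_1}H(z_1^k,x_2^k)\bigr]\\
&\quad+\bigl[\nabla_{x_1}H(z_1^k,x_2^k)-\tilde\nabla_{x_1}H(z_1^k,x_2^k)\bigr]-\tau_1^k(x_1^{k+1}-y_1^k),
\end{align*}
and likewise for $\xi_2^{k+1}$ with the intermediate point $\nabla_{x_2}H(x_1^{k+1},z_2^k)$. Applying $\|a+b+c\|^2\le 3(\|a\|^2+\|b\|^2+\|c\|^2)$ splits $\|\xi_1^{k+1}\|^2+\|\xi_2^{k+1}\|^2$ into a Lipschitz group, an estimator group, and a proximal group, each with a prefactor $3$.

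The two deterministic groups are then bounded pointwise by a constant multiple of $\|u^{k+1}-u^k\|^2$. For the Lipschitz group I would use the global $M$-Lipschitz continuity of $\nabla H$ and the elementary estimates $\|x_i^{k+1}-z_i^k\|\le\|x_i^{k+1}-x_i^k\|+\|x_i^k-x_i^{k-1}\|$ and $\|x_i^{k+1}-y_i^k\|\le\|x_i^{k+1}-x_i^k\|+\|x_i^k-x_i^{k-1}\|$; the proximal group is controlled because $\tau_i^k$ is uniformly bounded, as $L_i^k\le\lambda_i^+$ and $1-\alpha_i^k\ge 1-\bar\alpha_i>0$. Every squared difference arising this way ($\|x_i^{k+1}-x_i^k\|^2$ and $\|x_i^k-x_i^{k-1}\|^2$) is a summand of $\|u^{k+1}-u^k\|^2$, so these groups contribute the term $C\,\E(\|u^{k+1}-u^k\|^2)$.

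Finally, the estimator group equals $\|\nabla_{x_1}H(z_1^k,x_2^k)-\tilde\nabla_{x_1}H(z_1^k,x_2^k)\|^2+\|\nabla_{x_2}H(x_1^{k+1},z_2^k)-\tilde\nabla_{x_2}H(x_1^{k+1},z_2^k)\|^2$. Taking the conditional expectation $\E_k$ and applying Definition \ref{def:ivr}(i) bounds its expectation by $\Upsilon_k+V_1\E_k(\|u^{k+1}-u^k\|^2)$, since the three bracketed difference terms in that definition reassemble exactly into $\E_k(\|u^{k+1}-u^k\|^2)$; the prefactor $3$ then produces precisely the $3\Upsilon_k$ of the claim, and taking the full expectation and collecting constants (with $C$ absorbing $3V_1$) gives the stated inequality. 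I expect the \textbf{main obstacle} to be exactly this bookkeeping: the intermediate gradient must be inserted at the point where the estimator is evaluated so that Definition \ref{def:ivr}(i) is applicable, and one must track the factor $3$ carefully so that the bound features $3\E(\Upsilon_k)$ (with the index $k$, not $k+1$). For the limiting statement, Theorem \ref{thm:l2_steps} gives $\sum_k\E(\|u^{k+1}-u^k\|^2)<\infty$, hence $\E(\|u^{k+1}-u^k\|^2)\to0$ and in particular $\E(\|x^k-x^{k-1}\|^2)\to0$; Definition \ref{def:ivr}(iii) then forces $\E(\Upsilon_k)\to0$, so both terms on the right vanish and $\E(\dist(0,\partial F(x_1^{k+1},x_2^{k+1}))^2)\to0$.
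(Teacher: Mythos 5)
Your proposal follows essentially the same route as the paper's proof: the same explicit subgradient built from the prox optimality condition \eqref{star} and \eqref{subdiff_calc}, the same three-term split (Lipschitz, estimator, proximal) via $\|a+b+c\|^2\le 3(\|a\|^2+\|b\|^2+\|c\|^2)$, the bound of the deterministic groups by multiples of the summands of $\|u^{k+1}-u^k\|^2$, and Definition \ref{def:ivr}(i) under $\E_k$ for the estimator group yielding the factor $3\E(\Upsilon_k)$ with $3V_1$ absorbed into $C$, followed by Theorem \ref{thm:l2_steps} and Definition \ref{def:ivr}(iii) for the limit. Your explicit remark that $\tau_i^k$ is uniformly bounded (via $L_i^k\le\lambda_i^+$ and $1-\alpha_i^k\ge 1-\bar\alpha_i>0$) is a welcome addition, since the paper's constant $C_0$ formally depends on $k$ and this observation is what makes a single $C>0$ legitimate.
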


\begin{proof}
By definition of $x_1^{k+1}$, and \eqref{star} as well as Proposition \ref{prop:subgrad} it holds
\begin{align}
0\in \tau_1^k(x_1^{k+1}-y_1^k)+\tilde\nabla_{x_1} H(z_1^k,x_2^k)+\partial f(x_1^{k+1}).
\end{align}
This is equivalent to
\begin{align}
&\tau_1^k(y_1^k-x_1^{k+1})+\nabla_{x_1}H(x_1^{k+1},x_2^{k+1})-\tilde\nabla_{x_1}H(z_1^k,x_2^k)\\
&\in\nabla_{x_1} H(x_1^{k+1},x_2^{k+1})+\partial f(x_1^{k+1}) \in \partial_{x_1} F(x_1^{k+1},x_2^{k+1}).
\end{align}
Analogously we get that
\begin{align}
&\tau_2^k(y_2^k-x_2^{k+1})+\nabla_{x_2}H(x_1^{k+1},x_2^{k+1})-\tilde\nabla_{x_1}H(x_1^{k+1},z_2^k)\\
&\in\nabla_{x_2} H(x_1^{k+1},x_2^{k+1})+\partial g(x_2^{k+1}) \in \partial_{x_2} F(x_1^{k+1},x_2^{k+1}).
\end{align}
Then we obtain by Proposition \ref{prop:subgrad} that 
\begin{align}
v \coloneqq
\left(\begin{array}{c}\tau_1^k(y_1^k-x_1^{k+1})+\nabla_{x_1}H(x_1^{k+1},x_2^{k+1})-\tilde\nabla_{x_1}H(z_1^k,x_2^k)\\\tau_2^k(y_2^k-x_2^{k+1})+\nabla_{x_2}H(x_1^{k+1},x_2^{k+1})-\tilde\nabla_{x_1}H(x_1^{k+1},z_2^k)\end{array}\right)\in \partial F(x_1^{k+1},x_2^{k+1}),
\end{align}
and it remains to show that the squared norm of $v$ is in expectation bounded by 
$C \E( \|u^{k+1}-u^k\|^2)+3\E(\Upsilon_k)$ for some $C>0$. 
Using $(a+b+c)^2 \le 3(a^2+b^2+c^2)$ we estimate
\begin{align}
\|v\|^2
&= \|\tau_1^k(y_1^k-x_1^{k+1})+\nabla_{x_1}H(x_1^{k+1},x_2^{k+1})-\tilde\nabla_{x_1}H(z_1^k,x_2^k)\|^2\\
&+\|\tau_2^k(y_2^k-x_2^{k+1})+\nabla_{x_2}H(x_1^{k+1},x_2^{k+1})-\tilde\nabla_{x_1}H(x_1^{k+1},z_2^k)\|^2\\
&\leq 3(\tau_1^k)^2\|y_1^k-x_1^{k+1}\|^2+3\|\nabla_{x_1}H(x_1^{k+1},x_2^{k+1})-\nabla_{x_1}H(z_1^{k},x_2^{k})\|^2\\
&+3\|\nabla_{x_1}H(z_1^{k},x_2^{k})-\tilde\nabla_{x_1}H(z_1^k,x_2^k)\|^2+3(\tau_2^k)^2\|y_2^k-x_2^{k+1}\|^2\\
&+3\|\nabla_{x_2}H(x_1^{k+1},x_2^{k+1})-\nabla_{x_2}H(x_1^{k+1},z_2^{k})\|^2\\
&+3\|\nabla_{x_2}H(x_1^{k+1},z_2^{k})-\tilde\nabla_{x_2}H(x_1^{k+1},z_2^{k})\|^2.
\end{align}
Since $\nabla H$ is $M$-Lipschitz continuous and  $(a+b)^2\leq 2(a^2+b^2)$, we get further
\begin{align}
\|v\|^2
&\le
12(\tau_1^k)^2\Delta_1^{k+1}+6(\tau_1^k)^2\|y_1^k-x_1^k\|^2+12(\tau_2^k)^2\Delta_2^{k+1}+6(\tau_2^k)^2\|y_2^k-x_2^k\|^2\\
&+3M^2\|x_1^{k+1}-z_1^k\|^2+6M^2\Delta_2^{k+1}+3M^2\|x_2^{k+1}-z_2^k\|^2\\
&+3 \left(\|\nabla_{x_1}H(z_1^{k},x_2^{k})-\tilde\nabla_{x_1}H(z_1^k,x_2^k)\|^2+\|\nabla_{x_2}H(x_1^{k+1},z_2^{k})
-\tilde\nabla_{x_2}H(x_1^{k+1},z_2^{k})\|^2\right).
\end{align}
Using Lemma \ref{prop:deltas} and the fact that $\tilde\nabla$ is inertial variance-reduced, this implies
\begin{align}
\|v\|^2
&\leq 12(\tau_1^k)^2\Delta_1^{k+1}+12(\tau_1^k)^2(\alpha_1^k)^2\Delta_1^k+12(\tau_2^k)^2\Delta_2^{k+1}+12(\tau_2^k)^2(\alpha_2^k)^2\Delta_2^k\\
&+12M^2\Delta_1^{k+1}+6M^2\|x_1^{k}-z_1^k\|^2+6M^2\Delta_2^{k+1}+12M^2\Delta_2^{k+1}+6M^2\|x_2^{k}-z_2^k\|^2\\
&+3\left( \|\nabla_{x_1}H(z_1^{k},x_2^{k})-\tilde\nabla_{x_1}H(z_1^k,x_2^k)\|^2+\|\nabla_{x_2}H(x_1^{k+1},z_2^{k})
-\tilde\nabla_{x_2}H(x_1^{k+1},z_2^{k})\|^2\right)\\
&\leq 
12\left( (\tau_1^k)^2+M^2 \right)\Delta_1^{k+1}
+12\left((\tau_1^k)^2(\alpha_1^k)^2+M^2(\beta_1^k)^2\right)\Delta_1^k\\
&
+\left(12(\tau_2^k)^2+18M^2\right)\Delta_2^{k+1}
+ 12\left((\tau_2^k)^2(\alpha_2^k)^2+M^2(\beta_2^k)^2\right)\Delta_2^k\\
&+3\left(\|\nabla_{x_1}H(z_1^{k},x_2^{k})-\tilde\nabla_{x_1}H(z_1^k,x_2^k)\|^2+\|\nabla_{x_2}H(x_1^{k+1},z_2^{k})
-\tilde\nabla_{x_2}H(x_1^{k+1},z_2^{k})\|^2\right)\\
&\leq C_0 \|u^{k+1}-u^k\|^2\\
&+3 (\|\nabla_{x_1}H(z_1^{k},x_2^{k})-\tilde\nabla_{x_1}H(z_1^k,x_2^k)\|^2)\\
&+3 (\|\nabla_{x_2}H(x_1^{k+1},z_2^{k}) -\tilde\nabla_{x_2}H(x_1^{k+1},z_2^{k})\|^2 ),
\end{align}
where 
$$
C_0=12 \max\left((\tau_1^k)^2+M^2,(\tau_1^k)^2(\alpha_1^k)^2+M^2(\beta_1^k)^2,(\tau_2^k)^2+\frac32 M^2,(\tau_2^k)^2(\alpha_2^k)^2+M^2(\beta_2^k)^2\right).
$$
Noting that $\dist(0,\partial F(x_1^{k+1},x_2^{k+1}))^2 \le \|v\|^2$, taking the conditional expectation $\E_k$ and using that $\tilde\nabla$ 
is inertial variance-reduced, we conclude
\begin{align}
&\E_k\left(\dist(0,\partial F(x_1^{k+1},x_2^{k+1}))^2\right)\\
&\leq \E_k\left(C_0\|u^{k+1}-u^k\|^2\right)\\
&+3\E_k\left(\|\nabla_{x_1}H(z_1^{k},x_2^{k})-\tilde\nabla_{x_1}H(z_1^k,x_2^k)\|^2+\|\nabla_{x_2}H(x_1^{k+1},z_2^{k})-\tilde\nabla_{x_2}H(x_1^{k+1},z_2^{k})\|^2\right)\\
&\leq\E_k\left((C_0+3V_1)\|u^{k+1}-u^k\|^2\right)+3\Upsilon_k.
\end{align}
Taking the full expectation on both sides and setting $C\coloneqq C_0+3V_1$ proves the claim.
\end{proof}

Using Theorem \ref{thm:grad_decay}, we can show the linear decay of the expected squared distance of the subgradient to $0$.

\begin{Theorem}[Convergence of iSPALM]\label{thm:ispring_conv_1}
Under the assumptions of Theorem \ref{thm:l2_steps} it holds for $t$ drawn uniformly from $\{2,...,T+1\}$ that there exists some $0<\sigma<\gamma$ such that
\begin{align}
\E\left(\dist(0,\partial F(x_1^t,x_2^t))^2\right)
&\leq
\tfrac{C}{T (\gamma-\sigma)}\left(\Psi(u^1)-\inf_{u\in\R^{d_1}\times\R^{d_2}}\Psi(u)+\left(\tfrac{3(\gamma-\sigma)}{\rho C}+\tfrac1{M\rho}\right)\E(\Upsilon_1)\right).
\end{align}
\end{Theorem}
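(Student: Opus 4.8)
The plan is to average the pointwise estimate of Theorem \ref{thm:grad_decay} over the random index $t$ and then to control the two resulting sums, one over the squared increments $\E(\|u^{k+1}-u^k\|^2)$ and one over $\E(\Upsilon_k)$, using the telescoping descent from Theorem \ref{thm:l2_steps} together with the geometric decay of $(\Upsilon_k)_k$. Since $t$ is drawn uniformly from $\{2,\dots,T+1\}$ independently of the iterates, I would first write
$$
\E\left(\dist(0,\partial F(x_1^t,x_2^t))^2\right)=\frac1T\sum_{t=2}^{T+1}\E\left(\dist(0,\partial F(x_1^t,x_2^t))^2\right),
$$
and apply Theorem \ref{thm:grad_decay} to each summand with the shift $k=t-1$, so that the right-hand side is bounded by $\frac1T\sum_{k=1}^{T}\left(C\,\E(\|u^{k+1}-u^k\|^2)+3\,\E(\Upsilon_k)\right)$. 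It then remains to estimate $\sum_{k=1}^T\E(\|u^{k+1}-u^k\|^2)$ and $\sum_{k=1}^T\E(\Upsilon_k)$.

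For the increment sum, Theorem \ref{thm:l2_steps} already delivers, after dropping the nonnegative tail term and using that all summands are nonnegative,
$$
\gamma\sum_{k=1}^{T}\E(\|u^{k+1}-u^k\|^2)\le \Psi(u^1)-\inf_u\Psi(u)+\tfrac1{M\rho}\E(\Upsilon_1).
$$
For the $\Upsilon$ sum I would take full expectations in Definition \ref{def:ivr}(ii), noting that $\E(\|u^{k+1}-u^k\|^2)$ is exactly the three-term increment on its right-hand side, which gives $\E(\Upsilon_{k+1})\le(1-\rho)\E(\Upsilon_k)+V_\Upsilon\,\E(\|u^{k+1}-u^k\|^2)$. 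Rearranging into $\rho\,\E(\Upsilon_k)\le\E(\Upsilon_k)-\E(\Upsilon_{k+1})+V_\Upsilon\E(\|u^{k+1}-u^k\|^2)$ and summing telescopically over $k=1,\dots,T$ yields
$$
\sum_{k=1}^T\E(\Upsilon_k)\le\tfrac1\rho\E(\Upsilon_1)+\tfrac{V_\Upsilon}\rho\sum_{k=1}^T\E(\|u^{k+1}-u^k\|^2).
$$

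Substituting the $\Upsilon$ bound into the averaged estimate, the increment sum now carries the combined coefficient $C+\tfrac{3V_\Upsilon}\rho$, and inserting the Theorem \ref{thm:l2_steps} bound for $\sum_{k=1}^T\E(\|u^{k+1}-u^k\|^2)$ produces the overall factor $\tfrac1\gamma\bigl(C+\tfrac{3V_\Upsilon}\rho\bigr)$ in front of the bracket $\Psi(u^1)-\inf_u\Psi(u)+\tfrac1{M\rho}\E(\Upsilon_1)$, plus a leftover term $\tfrac{3}{T\rho}\E(\Upsilon_1)$. The final and genuinely delicate step is to route the extra $\tfrac{3V_\Upsilon}\rho$ through the slack $\sigma$ rather than letting it inflate $C$: choosing $\sigma\coloneqq\gamma\,\tfrac{3V_\Upsilon/\rho}{C+3V_\Upsilon/\rho}$ turns $\tfrac1\gamma\bigl(C+\tfrac{3V_\Upsilon}\rho\bigr)=\tfrac{C}{\gamma-\sigma}$ into an identity, and one checks $0<\sigma<\gamma$ because $C>0$. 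Reassembling the two $\E(\Upsilon_1)$ contributions, namely $\tfrac{C}{\gamma-\sigma}\cdot\tfrac1{M\rho}$ from the bracket and $\tfrac3{\rho}$ from the leftover, reproduces exactly the stated constant $\tfrac{3(\gamma-\sigma)}{\rho C}+\tfrac1{M\rho}$ once the common prefactor $\tfrac{C}{T(\gamma-\sigma)}$ is factored out. The main obstacle is precisely this bookkeeping: one must verify that the $V_\Upsilon$-term generated by the geometric-decay estimate is absorbed into the $\bigl(\Psi(u^1)-\inf\Psi\bigr)$-coefficient via the slack $\sigma$ and does not contaminate the $\E(\Upsilon_1)$-coefficient, which is what forces the specific algebraic value of $\sigma$.
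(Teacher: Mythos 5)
Your proof is correct and takes essentially the same route as the paper's: it uses the same three ingredients (the telescoped descent estimate of Theorem \ref{thm:l2_steps}, the subgradient bound of Theorem \ref{thm:grad_decay}, and the geometric decay of Definition \ref{def:ivr}(ii)), and your explicit choice $\sigma=\gamma\,\tfrac{3V_\Upsilon/\rho}{C+3V_\Upsilon/\rho}$ is precisely the solution of the paper's implicit equation $\sigma=\tfrac{3(\gamma-\sigma)V_\Upsilon}{C\rho}$. The only difference is bookkeeping order: you telescope the descent inequality and the $\Upsilon$-recursion separately and combine at the end, whereas the paper absorbs the $V_\Upsilon$-term via $\sigma$ in a per-iteration estimate and then sums.
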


\begin{proof}
By  \eqref{eq:upsilon}, Theorem \ref{thm:grad_decay} and \eqref{eq:exp_psi}
it holds for $0<\sigma<\gamma$ that
\begin{align}
\E\left(\Psi(u^k)-\Psi(u^{k+1})\right)
&\geq \gamma \E(\|u^{k+1}-u^k\|^2)+\tfrac1{M\rho}\E\left(\Upsilon_{k+1}-\Upsilon_k\right)\\
&\geq\sigma\E(\|u^{k+1}-u^k\|^2)+\tfrac{\gamma-\sigma}{C}\E\left(\dist(0,\partial F(x_1^{k+1},x_2^{k+1}))^2\right)\\&-\tfrac{3(\gamma-\sigma)}{C}\E(\Upsilon_k)+\tfrac1{M\rho}\E\left(\Upsilon_{k+1}-\Upsilon_k\right)\\
&\geq \sigma\E(\|u^{k+1}-u^k\|^2)+\tfrac{\gamma-\sigma}{C}\E\left(\dist(0,\partial F(x_1^{k+1},x_2^{k+1}))^2\right)\\&+\left(\tfrac{3(\gamma-\sigma)}{\rho C}+\tfrac1{M\rho}\right)\E\left(\Upsilon_{k+1}-\Upsilon_k\right)-\tfrac{3(\gamma-\sigma)V_\Upsilon}{C\rho}\E(\|u^{k+1}-u^k\|^2).
\end{align}
Choosing $\sigma \coloneqq \tfrac{3(\gamma-\sigma)V_\Upsilon}{C\rho}$ yields 
\begin{align}
\E\left(\Psi(u^k)\!-\!\Psi(u^{k+1})\right)
\geq
\tfrac{\gamma-\sigma}{C}\E\left(\dist(0,\partial F(x_1^{k+1},x_2^{k+1}))^2\right)
\!-\!\left(\tfrac{3(\gamma-\sigma)}{\rho C} \!+ \!\tfrac1{M\rho}\right)\E\left(\Upsilon_{k+1} \! -\! \Upsilon_k\right).
\end{align}
Adding this up for $k=1,...,T$ we get
\begin{align}
\E\left(\Psi(u^1)  -\Psi(u^{T})\right)
&\geq 
\tfrac{\gamma-\sigma}{C}\sum_{k=1}^{T}\E\left(\dist(0,\partial F(x_1^{k+1},x_2^{k+1}))^2\right) \\
& +  \left(\tfrac{3(\gamma-\sigma)}{\rho C}+\tfrac1{M\rho}\right)\E\left(\Upsilon_{T}-\Upsilon_1\right).
\end{align}
Since $\Upsilon_{T}\geq0$ this yields for $t$ drawn randomly from $\{2,...,T+1\}$ that
\begin{align}
\E\left(\dist(0,\partial F(x_1^{t},x_2^{t}))^2\right)
&=\tfrac1T\sum_{k=1}^{T}\E\left(\dist(0,\partial F(x_1^{k+1},x_2^{k+1}))^2\right)\\
&\leq\tfrac{C}{T(\gamma-\sigma)}\left(\Psi(u^1)-\inf_{u\in(\R^{d_1}\times\R^{d_2})^2}\Psi(u)+\left(\tfrac{3(\gamma-\sigma)}{\rho C}+\tfrac1{M\rho}\right)\E(\Upsilon_1)\right).
\end{align}
This finishes the proof.
\end{proof}

In \cite{DTLDS2020} the authors proved global convergence of the objective function 
evaluated at the iterates of SPRING in expectation if the global error bound
\begin{align}
F(x_1,x_2)-\ubar F\leq\mu\dist(0,\partial F(x_1,x_2))^2\label{eq:global_error_bound}
\end{align}
is fulfilled for some $\mu>0$.
Using this error bound, we can also prove global convergence of iSPALM in expectation with a linear convergence rate.
Note that the authors of \cite{DTLDS2020} used the generalized gradient instead of the subgradient also for this error bound. 
Similar as before this seems to be unsuitable due to the heavy dependence on of the generalized gradient on the step size parameters.

\begin{Theorem}[Convergence of iSPALM]\label{thm:ispring_conv_2}
Let the assumptions of Theorem \ref{thm:l2_steps} hold true. 
If in addition \eqref{eq:global_error_bound} is fulfilled, then there exists some 
$\Theta_0\in(0,1)$ and $\Theta_1>0$ such that
\begin{align}
\E\left(F(x_1^{T+1},x_2^{T+1})-\ubar F\right)\leq (\Theta_0)^T\left(\Psi(u^1)-\ubar F+\Theta_1\E(\Upsilon_1)\right).
\end{align}
In particular, it holds
$
\lim_{T\to\infty}\E(F(x_1^T,x_2^T)-\ubar F)=0.
$
\end{Theorem}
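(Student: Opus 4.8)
The plan is to construct a single nonnegative potential that contracts geometrically in each step. I would set
\[
\Phi_k \coloneqq \E\bigl(\Psi(u^k)\bigr) - \ubar F + a\,\E(\Upsilon_k), \qquad a \coloneqq \tfrac1{M\rho} + \tfrac{\gamma}{2V_\Upsilon},
\]
which is nonnegative because $\Psi(u^k) \ge F(x_1^k,x_2^k) \ge \ubar F$ and $\Upsilon_k \ge 0$. The goal is to prove $\Phi_{k+1} \le \Theta_0\,\Phi_k$ for a fixed $\Theta_0 \in (0,1)$. Iterating from $k=1$ then gives $\Phi_{T+1} \le \Theta_0^{\,T}\Phi_1$, and since the SARAH estimator equals the full gradient for $k=0$ the element $u^1$ is deterministic, so $\Phi_1 = \Psi(u^1) - \ubar F + a\,\E(\Upsilon_1)$ and one reads off $\Theta_1 = a$. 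As $\E\bigl(F(x_1^{T+1},x_2^{T+1}) - \ubar F\bigr) \le \Phi_{T+1}$, the claimed estimate and the limit $\to 0$ follow at once.

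First I would upgrade the descent \eqref{eq:exp_psi} of Theorem \ref{thm:l2_steps} into a descent for $\Phi_k$ that controls \emph{both} the step sizes and the noise. Writing $\Phi_k - \Phi_{k+1} = \E\bigl(\Psi(u^k)-\Psi(u^{k+1})\bigr) + a\,\E(\Upsilon_k - \Upsilon_{k+1})$ and inserting \eqref{eq:exp_psi} leaves the surplus $\bigl(a-\tfrac1{M\rho}\bigr)\E(\Upsilon_k - \Upsilon_{k+1})$. Bounding it below by the geometric decay of Definition \ref{def:ivr}(ii), in the form $\E(\Upsilon_k - \Upsilon_{k+1}) \ge \rho\,\E(\Upsilon_k) - V_\Upsilon\,\E(\|u^{k+1}-u^k\|^2)$ (where $\|u^{k+1}-u^k\|^2$ collects exactly the differences $x_i^{k+1}-x_i^k$, $x_i^k-x_i^{k-1}$ and $x_i^{k-1}-x_i^{k-2}$), the choice of $a$ converts half of $\gamma\,\E(\|u^{k+1}-u^k\|^2)$ into a noise term and yields
\[
\Phi_k - \Phi_{k+1} \ge \tfrac\gamma2\,\E(\|u^{k+1}-u^k\|^2) + \tfrac{\gamma\rho}{2V_\Upsilon}\,\E(\Upsilon_k).
\]

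Next I would bound $\Phi_{k+1}$ from above by the same two quantities. The function-value part is handled by the error bound \eqref{eq:global_error_bound} followed by Theorem \ref{thm:grad_decay}, giving $\E\bigl(F(x_1^{k+1},x_2^{k+1}) - \ubar F\bigr) \le \mu C\,\E(\|u^{k+1}-u^k\|^2) + 3\mu\,\E(\Upsilon_k)$; each inertial term $\Delta_i^{k+1},\Delta_i^k$ in $\Psi(u^{k+1})$ is at most $\tfrac12\|u^{k+1}-u^k\|^2$; and $a\,\E(\Upsilon_{k+1})$ is bounded through the geometric decay by $a(1-\rho)\E(\Upsilon_k) + aV_\Upsilon\E(\|u^{k+1}-u^k\|^2)$. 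Collecting these gives $\Phi_{k+1} \le c_u\,\E(\|u^{k+1}-u^k\|^2) + c_\Upsilon\,\E(\Upsilon_k)$ with explicit $c_u,c_\Upsilon > 0$. Comparing with the previous display and setting $\kappa \coloneqq \max\bigl(\tfrac{2c_u}{\gamma}, \tfrac{2V_\Upsilon c_\Upsilon}{\gamma\rho}\bigr)$ forces $\Phi_{k+1} \le \kappa(\Phi_k - \Phi_{k+1})$, hence $(1+\kappa)\Phi_{k+1} \le \kappa\Phi_k$ and $\Theta_0 \coloneqq \tfrac{\kappa}{1+\kappa} \in (0,1)$ does the job.

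The delicate point is the first step. The coefficient $\tfrac1{M\rho}$ that \eqref{eq:exp_psi} attaches to the $\Upsilon$-increment only cancels the telescoped noise; on its own it produces \emph{no} $\E(\Upsilon_k)$ term in the descent, so the $a\,\E(\Upsilon_{k+1})$ and $3\mu\,\E(\Upsilon_k)$ contributions to $\Phi_{k+1}$ could not be absorbed and the comparison would fail. Enlarging $a$ by exactly the slack $\tfrac{\gamma}{2V_\Upsilon}$ lets the geometric decay feed a genuine multiple of $\E(\Upsilon_k)$ back into the descent while sacrificing only half of the step-size term, which is what makes the two-sided comparison close. I would note that $V_\Upsilon > 0$ for the SARAH estimator (Proposition \ref{prop:sarah-ivr}), so $a$ is well defined; if $V_\Upsilon = 0$ the noise already decays geometrically by itself and the argument only simplifies.
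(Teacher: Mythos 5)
Your proof is correct, and it reaches the contraction by a genuinely different mechanism than the paper. Both arguments combine the same four ingredients --- the descent estimate \eqref{eq:exp_psi}, the subgradient bound of Theorem \ref{thm:grad_decay}, the error bound \eqref{eq:global_error_bound}, and the geometric decay of Definition \ref{def:ivr}(ii) --- and both ultimately track a Lyapunov quantity of the form $\E(\Psi(u^k))-\ubar F+c\,\E(\Upsilon_k)$. The paper, however, tunes the noise weight implicitly: it splits off a $d$-fraction of the descent, converts it via the error bound into $\tfrac{d}{C\mu}\E\left(F(x_1^{k+1},x_2^{k+1})-\ubar F\right)$, multiplies through by $C_d=\tfrac{C\mu}{C\mu+d}$, adds $s\,C_\Upsilon C_d$ times the decay inequality with $s=\tfrac{1-C_d}{C_d+\rho-1}$ to rebalance the $\Upsilon$-coefficients, and finally uses $\lim_{d\to 0}h(d)=-\gamma$ to pick $d$ small enough that the residual coefficient of $\E(\|u^{k+1}-u^k\|^2)$ is negative, obtaining $\Theta_0=C_d$. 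You instead fix the noise weight $a=\tfrac1{M\rho}+\tfrac{\gamma}{2V_\Upsilon}$ up front and run the standard two-sided comparison: a sufficient-decrease bound $\Phi_k-\Phi_{k+1}\geq\tfrac{\gamma}{2}\E(\|u^{k+1}-u^k\|^2)+\tfrac{\gamma\rho}{2V_\Upsilon}\E(\Upsilon_k)$ --- valid because $\|u^{k+1}-u^k\|^2$ is exactly the sum of the three consecutive differences appearing in Definition \ref{def:ivr}(ii) --- together with $\Phi_{k+1}\leq c_u\E(\|u^{k+1}-u^k\|^2)+c_\Upsilon\E(\Upsilon_k)$, closed by $\Phi_{k+1}\leq\kappa(\Phi_k-\Phi_{k+1})$, i.e.\ $\Theta_0=\tfrac{\kappa}{1+\kappa}$. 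Both key displays check out; in fact your upper bound accounts for \emph{all} terms of $\Psi(u^{k+1})$, including $\tfrac{S}{2}(\Delta_1^k+\Delta_2^k)$, which the paper's displayed manipulation silently drops (harmlessly, since these too are dominated by $\tfrac12\|u^{k+1}-u^k\|^2$). What each route buys: yours avoids the continuity argument in $d$, gives fully explicit constants, and is more modular; the paper's ties the rate transparently to the error-bound constant $\mu$ through $\Theta_0=\tfrac{C\mu}{C\mu+d}$. Two caveats you already flag correctly: your $a$ needs $V_\Upsilon>0$ (true for SARAH by Proposition \ref{prop:sarah-ivr} since $p>1$; if $V_\Upsilon=0$ the noise contracts on its own and any $a>\tfrac1{M\rho}$ works), and for a general inertial variance-reduced estimator $u^1$ need not be deterministic, so $\Psi(u^1)$ should strictly read $\E(\Psi(u^1))$ --- an imprecision the paper's own statement shares and that your remark on SARAH's full initial gradient resolves in the case of interest.
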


\begin{proof}
By \eqref{eq:exp_psi} and Theorem \ref{thm:grad_decay}, 
we obtain for $0<d<\min(\gamma,\tfrac{C\rho\mu}{1-\rho})$ that
\begin{align}
\E\left(\Psi(u^{k+1})-\ubar F+\tfrac1{M\rho}\Upsilon_{k+1}\right)
&\leq \E\left(\Psi(u^k)-\ubar F+\tfrac1{M\rho}\Upsilon_k\right)-\gamma\E(\|u^{k+1}-u^k\|^2)\\
&\leq \E\left(\Psi(u^k)-\ubar F+\tfrac1{M\rho}\Upsilon_k\right)\\
&-\tfrac{d}{C}\E\left(\dist(0,\partial F(x_1^{k+1},x_2^{k+1}))^2\right)+\tfrac{3d}{C}\E(\Upsilon_k)
\\&-\left(\gamma-d\right)\E(\|u^{k+1}-u^k\|^2).
\end{align}
Using  \eqref{eq:upsilon} in combination with the global error bound \eqref{eq:global_error_bound}, we get
\begin{align}
&\E\left(\Psi(u^{k+1})-\ubar F+\left(\tfrac{3d}{\rho C}+\tfrac1{M\rho}\right)\Upsilon_{k+1}\right)
\leq 
\E\left(\Psi(u^k)-\ubar F+\left(\tfrac{3d}{\rho C}+\tfrac1{M\rho}\right)\Upsilon_k\right)\\
&-\tfrac{d}{C\mu}\E\left(F(x_1^{k+1},x_2^{k+1})-\ubar F\right)
-\left(\gamma-d-\tfrac{3d V_\Upsilon}{\rho C}\right)\E(\|u^{k+1}-u^k\|^2).
\end{align}
Setting $C_\Upsilon \coloneqq \left(\tfrac{3d}{\rho C}+\tfrac1{M\rho}\right)$
and applying the definition \eqref{**} of $\Psi$, this implies 
\begin{align}
&\left(1+\tfrac{d}{C\mu}\right)\E\left(\Psi(u^{k+1})-\ubar F\right)
-
\tfrac{d}{C\mu}\E(\delta_1\Delta_1^{k+1}+\delta_2\Delta_2^{k+1})+C_\Upsilon\E(\Upsilon_{k+1})\\
&\leq
\E\left(\Psi(u^k)-\ubar F\right)+C_\Upsilon\E(\Upsilon_k)-\left(\gamma-d-\tfrac{3dV_\Upsilon}{\rho C}\right)\E(\|u^{k+1}-u^k\|^2).
\end{align}
With $\delta \coloneqq  \max (\delta_1,\delta_2)$ 
and $\Delta_1^{k+1}+\Delta_2^{k+1} \le \tfrac12\|u^{k+1}-u^k\|^2$ we get
\begin{align}
&\left(1+\tfrac{d}{C\mu}\right)\E\left(\Psi(u^{k+1})-\ubar F\right)+C_\Upsilon\E(\Upsilon_{k+1})\\
&\le 
\E\left(\Psi(u^k)-\ubar F\right)+C_\Upsilon\E(\Upsilon_k)-\left(\gamma-d-\tfrac{3dV_\Upsilon}{\rho C}
- \tfrac{d\delta}{2 C\mu}\right)\E(\|u^{k+1}-u^k\|^2).
\end{align}
Multiplying by $C_d \coloneqq \tfrac1{1+\tfrac{d}{C\mu}}=\tfrac{C\mu}{C\mu+d}$ this becomes
\begin{align}
&\E\left(\Psi(u^{k+1})-\ubar F\right)+ C_\Upsilon C_d \E(\Upsilon_{k+1})
\leq
\tfrac{C\mu}{C\mu+d}\E\left(\Psi(u^k)-\ubar F\right)+ C_\Upsilon C_d \E(\Upsilon_k)\\
&-\tfrac{C\mu}{C\mu+d}
\left(\gamma-d-\tfrac{3dV_\Upsilon}{\rho C}-\tfrac{d\delta}{ 2 C\mu}\right)\E(\|u^{k+1}-u^k\|^2).
\label{eq:mybound}
\end{align}                                                                     
Since $d<\tfrac{C\rho\mu}{1-\rho}$ we know that
$s\coloneqq \tfrac{1-C_d}{C_d+\rho-1} = \frac{d}{\rho C \mu + (\rho -1)d} > 0$. 
Thus, 
adding $s C_\Upsilon C_d$ times equation Definition \ref{def:ivr} (ii) to \eqref{eq:mybound} 
gives
\begin{align}
&\E\left( \Psi(u^{k+1})-\ubar F\right) + (1+s) C_\Upsilon C_d \E(\Upsilon_{k+1})
\leq C_d \E\left(\Psi(u^k)-\ubar F+ (1+s) C_\Upsilon C_d \E(\Upsilon_k)\right)\\
&+
C_d \underbrace{\left( V_\Upsilon s C_\Upsilon  - \left(\gamma-d-\tfrac{3dV_\Upsilon}{\rho C} -
\tfrac{d\delta}{2 C\mu}\right)\right)}_{\eqqcolon h(d)}\E(\|u^{k+1}-u^k\|^2),
\end{align}
where we have used that $1+(1-\rho)s=C_d(1+s)$.
Since $s$ converges to $0$ as $d\to0$ we have that $\lim_{d\to0}h(d)=-\gamma$. Thus we can choose $d>0$ small enough, such that $h(d)<0$. 
Then we get
\begin{align}
\E\left(\Psi(u^{k+1})-\ubar F\right)+ (1+s)C_\Upsilon C_d \E(\Upsilon_{k+1})
&\leq
C_d \E\left(\Psi(u^k)-\ubar F+ (1+s)C_\Upsilon C_d \E(\Upsilon_k)\right).
\end{align}
Finally, setting $\Theta_0 \coloneqq C_d$ and $\Theta_1 \coloneqq (1+s)C_\Upsilon C_d$ 
and applying the last equation iteratively, we obtain
\begin{align}
\E\left(\Psi(u^{T+1})-\ubar F+\Theta_1\Upsilon_{T+1}\right)&\leq(\Theta_0)^T\E\left(\Psi(u^1)-\ubar F+\Theta_1\Upsilon_1\right).
\end{align}
Note that $\Psi(u^{T+1})\geq F(x_1^{T+1},x_2^{T+1})$ and that $\Upsilon_{T+1}\geq0$. This yields
\begin{align}
\E\left(F(x_1^{T+1},x_2^{T+1})-\ubar F\right)
&\leq
(\Theta_0)^T\E\left(\Psi(u^1)-\ubar F+\Theta_1\Upsilon_1\right),
\end{align}
and we are done.
\end{proof}

\section{Student-$t$ Mixture Models} \label{sec:student-t}
In this section, we show how PALM and its inertial and stochastic variants 
can be applied to learn  Student-$t$ MMs. 
To this end, we denote 
by $\mathrm{Sym}(d)$ the linear space of symmetric $d \times d$ matrices, 
by 
$\SPD(d)$ the cone of symmetric, positive definite $d \times d$ matrices
and by
$\Delta_K \coloneqq \{\alpha = (\alpha_k)_{k=1}^K: \sum_{k=1}^K \alpha_k = 1, \; \alpha_k \ge 0\}$
the probability simplex in $\R^K$.
The density function of the 
$d$-dimensional Student-$t$ distribution $T_\nu(\mu,\Sigma)$ with 
$\nu>0$ degrees of freedom, \emph{location} parameter $\mu\in \R^d$ and \emph{scatter matrix} $\Sigma\in \SPD(d)$ 
is given by
\begin{equation}\label{pdf}
f(x|\nu,\mu,\Sigma)  = 
\frac{\Gamma\left(\frac{d+\nu}{2}\right)}{\Gamma\left(\frac{\nu}{2}\right)\, \nu^{\frac{d}{2}} \, \pi^{\frac{d}{2}} \,
{\abs{\Sigma}}^{\frac{1}{2}}} \, \frac{1}{\left(1 +\frac1\nu(x-\mu)^\tT \Sigma^{-1}(x-\mu) \right)^{\frac{d+\nu}{2}}},
\end{equation}
with the \emph{Gamma function}
$
\Gamma(s) \coloneqq\int_0^\infty t^{s-1}\e^{-t}\dx[t] 
$.
The expectation of the Student-$t$ distribution is $\E(X) = \mu$ for $\nu > 1$ 
and the covariance matrix is given by $\Cov(X) =\frac{\nu }{\nu-2} \Sigma$ for $\nu > 2$, 
otherwise these quantities are undefined. 
The smaller the value of $\nu$, the heavier are the tails of the $T_\nu(\mu,\Sigma)$ distribution.
For $\nu \to \infty$, 
the Student-$t$ distribution $T_\nu(\mu,\Sigma)$ converges to the normal distribution $\NN(\mu,\Sigma)$ and for $\nu = 0$
it is related to the projected normal distribution on the sphere $\SP^{d-1}\subset\R^d$.
Figure~\ref{Fig:different_nu} 
illustrates this behavior for the one-dimensional standard Student-$t$ distribution. 

\begin{figure}[thb]
\centering  
\centering  
{\includegraphics[width=0.4\textwidth]{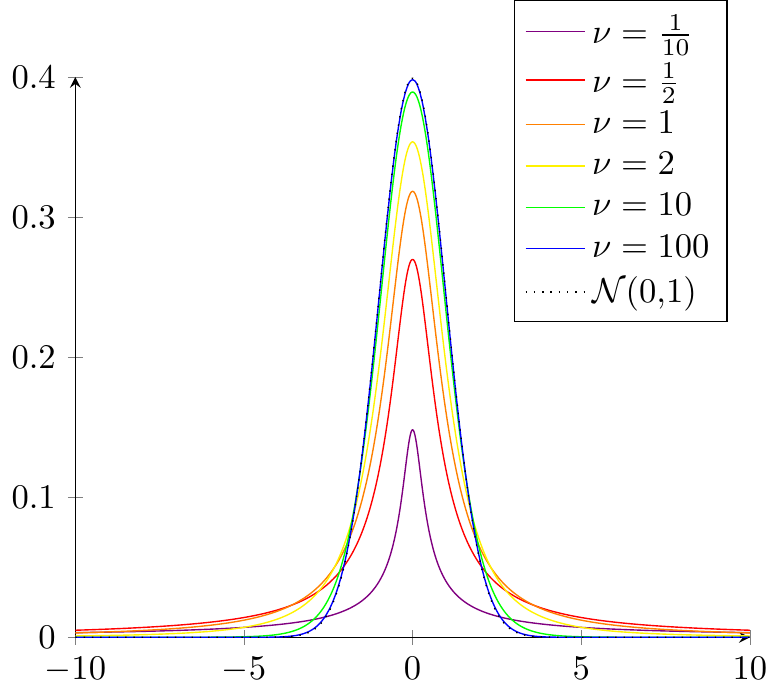}}
\caption{Standard Student-$t$ distribution $T_\nu(0,1)$ 
for different values of $\nu$ in comparison with the standard normal distribution $\NN(0,1)$.}\label{Fig:different_nu}
\end{figure}

The construction of MMs arises from the following scenario: 
we have $K$ random number generators sampling from different distributions. Now we first choose one of the random number generators randomly using the probability weights $\alpha=(\alpha_1,...,\alpha_K) \in\Delta_K$ and sample from the corresponding distribution. 
If all random number generators sample from Student-$t$ distributions we arrive at Student-$t$ MMs.
More precisely, if $Y$ is a random variable mapping into $\{1,...,K\}$ and $X_1,...,X_k$ are random variables with 
$X_k\sim T_{\nu_k}(\mu_k,\Sigma_k)$, 
then the random variable  $X_Y$ is a Student-$t$ MM with probability density function
\begin{equation}\label{eq:density_mm}
p(x)=\sum_{k=1}^K \alpha_k f(x|\nu_k,\mu_k,\Sigma_k), \qquad \alpha\in\Delta_K.
\end{equation}
For samples $\mathcal X=(x_1,...,x_n)$, we aim to find the parameters of the the Student-$t$ MM
by minimizing its negative log-likelihood function 
\begin{equation}\label{eq:logLike}
\mathcal L(\alpha,\nu,\mu,\Sigma|\mathcal X)=-\frac1n\sum_{i=1}^n\log \bigg(\sum_{k=1}^K\alpha_k f(x_i|\nu_k,\mu_k,\Sigma_k)\bigg)
\end{equation}
subject to the parameter constraints.
A first idea to rewrite this problem in the form \eqref{eq:PALM_min_general} looks as
\begin{align}
F(\alpha,\nu,\mu,\Sigma)=H(\alpha,\nu,\mu,\Sigma)+f_1(\alpha)+f_2(\nu)+f_3(\mu)+f_4(\Sigma),\label{eq:obj_student_t}
\end{align}
where $H \coloneqq \mathcal{L}$, $f_1 \coloneqq \iota_{\Delta_K}$, $f_2 \coloneqq \iota_{\R^K_{>0}}$, $f_3 \coloneqq 0$, 
$f_4 \coloneqq \iota_{\SPD(d)^K}$, and 
$\iota_{\mathcal S}$ denotes the \emph{indicator function} of the set ${\mathcal S}$
defined by $\iota_{\mathcal S}(x) \coloneqq 0$ if $x \in \mathcal S$ and $\iota_{\mathcal S}(x) \coloneqq \infty$ otherwise.
Indeed one of the authors has applied PALM and iPALM to such a setting
without any convergence guarantee in \cite{Hertrich2020}. 
The problem is that $\mathcal{L}$ is not defined on the whole Euclidean space 
and since $\mathcal{L}(\alpha,\nu,\mu,\Sigma)\to\infty$ if $\Sigma_k\to 0$ 
for some $k$, the function can also not continuously extended to the whole $\R^K\times\R^K\times\R^{d\times K}\times\Sym(d)^K$. 
Furthermore, the functions $f_2$ and $f_4$ are not lower semi-continuous. 
Consequently, the function \eqref{eq:obj_student_t} does not fulfill 
the assumptions required for the convergence of PALM and iPALM as well as their stochastic variants.
Therefore we modify the above model as follows:
Let  $\SPD_\epsilon(d) \coloneqq \{\Sigma \in \SPD(d): \Sigma \succeq\epsilon I_d\}$.
Then we use the surjective mappings 
$\varphi_1 \colon \R^K \to\Delta_K$, 
$\varphi_2 \colon \R^K\to\R_{\geq\epsilon}^K$ and 
$\varphi_3  \colon  \mathrm{Sym}(d)^K \to \SPD_\varepsilon(d)^K$
defined by 
\begin{align} \label{trafo}
\varphi_1(\alpha) \coloneqq \frac{\exp(\alpha)}{\sum_{j=1}^K\exp(\alpha_j)},\quad
\varphi_2 (\nu) \coloneqq \nu^2+\epsilon ,\quad 
\varphi_3(\Sigma) \coloneqq \left( \Sigma_k^T\Sigma_k+\epsilon I_d \right)_{k=1}^K 
\end{align}
to reshape problem \eqref{eq:obj_student_t} as the unconstrained optimization problem 
\begin{align} \label{eq:unconstrainted_problem}
\argmin_{\alpha\in\R^K,\nu\in\R^K,\mu\in\R^{d\times K},\Sigma\in\Sym(d)^{K}} H(\alpha, \nu,\mu,\Sigma)
\coloneqq
{\mathcal L}(\varphi_1(\alpha),\varphi_2(\nu),\mu,\varphi_3 (\Sigma)|\mathcal X).
\end{align}
Note that the functions $f_i$, $i=1,\ldots,4$ are just zero.

For problem \eqref{eq:unconstrainted_problem}, PALM and iPALM
reduce basically to  block gradient descent algorithms as in
Algorithm \ref{alg:PALM_mm} and \ref{alg:iPALM_mm}, respectively.
Note that we use $\beta_i^k = 0$ for all $k,i$ and $\alpha_i^k = \rho^k$ for $i=1,\ldots,4$ 
as iPALM parameters in Algorithm \ref{alg:iPALM_mm}.
For the stochastic variants SPRING and iSPALM, 
we have just to replace the gradient by a stochastic gradient estimator.

\begin{algorithm}[!ht]
\caption{Proximal Alternating Linearized Minimization (PALM) for Student-$t$ MMs}\label{alg:PALM_mm}
\begin{algorithmic}
\State \textbf{Input:} $x_1,\ldots,x_n\in\R^d$, $\alpha^{0}\in\R^K$, $\nu^{0}\in\R^K$, $\mu^{0}\in \R^{d\times K}$, 
$\Sigma^{0}\in\R^{d\times d\times K}$, $\tau_1^k,\tau_2^k,\tau_3^k,\tau_4^k$ for $k\in\N$
\For {$k=1,...$}
\State \textbf{$\alpha$-Update:}
\State $\alpha^{k+1}=\alpha^{k}-\frac1{\tau_1^k}
\nabla_{\alpha} H(\alpha^{k},\nu^{k},\mu^{k},\Sigma^{k})$\\
\State \textbf{$\nu$-Update:}
\State $\nu^{k+1}=\nu^{k}-\frac1{\tau_2^k}\nabla_{\nu} H(\alpha^{k+1},\nu^{k},\mu^{k},\Sigma^{k})$\\
\State \textbf{$\mu$-Update:}
\State $\mu^{k+1}=\mu^{k}-\frac1{\tau_3^k}\nabla_{\mu} H(\alpha^{k+1},\nu^{k+1},\mu^{k},\Sigma^{k})$\\
\State \textbf{$\Sigma$-Update:}
\State $\Sigma^{k+1}=\Sigma^{k}-\frac1{\tau_4^k}\nabla_{\Sigma}
H(\alpha^{k+1},\nu^{k},\mu^{k+1},\Sigma^{k})$
\EndFor
\end{algorithmic}
\end{algorithm}

\begin{algorithm}[!ht]
\caption{Inertial Proximal Alternating Linearized Minimization (iPALM) for Student-$t$ MMs}\label{alg:iPALM_mm}
\begin{algorithmic}
\State \textbf{Input:} $x_1,\ldots,x_n\in\R^d$, $\alpha^{0}\in\R^K$, $\nu^{0}\in\R^K$, $\mu^{0}\in \R^{d\times K}$, 
$\Sigma^{0}\in\R^{d\times d\times K}$, $\rho^k\in[0,1]$, $\tau_1^k,\tau_2^k,\tau_3^k,\tau_4^k$ for $k\in\N$
\For {$k=1,...$}
\State \textbf{$\alpha$-Update:}
\State $\alpha_z^k=\alpha^k+\rho^k(\alpha^k-\alpha^{k-1})$
\State $\alpha^{k+1}=\alpha_z^k-\frac1{\tau_1^k}\nabla_{\alpha}H(\alpha_z^k,\nu^k,\mu^k,\Sigma^k)$\\
\State \textbf{$\nu$-Update:}
\State $\nu_z^k=\nu^k+\rho^k(\nu^k-\nu^{k-1})$
\State $\nu^{k+1}=\nu_z^k-\frac1{\tau_2^k}\nabla_{\tilde\nu}H(\alpha^{k+1},\nu_z^k,\mu^k,\Sigma^k)$\\
\State \textbf{$\mu$-Update:}
\State $\mu_z^k=\mu^k+\rho^k(\mu^k-\mu^{k-1})$
\State $\mu^{k+1}=\mu_z^k-\frac1{\tau_3^k}\nabla_{\mu}H(\alpha^{k+1},\nu^{k+1},\mu_z^k,\Sigma^k)$\\
\State \textbf{$\Sigma$-Update:}
\State $\Sigma_z^k=\Sigma^k+\rho^k(\Sigma^k-\Sigma^{k-1})$
\State $\Sigma^{k+1}=\Sigma_z^k-\frac1{\tau_4^k}\nabla_{\Sigma} H(\alpha^{k+1},\nu^{k+1},\mu^{k+1},\Sigma_z^k)$
\EndFor
\end{algorithmic}
\end{algorithm}

Finally, we will show that $H$ in \eqref{eq:unconstrainted_problem}
\begin{itemize}
\item is a KL function which is bounded from below, and
\item satisfies the Assumption \ref{ass:PALM2}(i).
\end{itemize} 
Since $H \in C^2(\R^K\times \R^K\times \R^{d\times K} \times\Sym(d)^{K})$ 
we know by Remark \ref{rem:PALM_other_ass} that Assumption \ref{ass:PALM2}(ii)
is also fulfilled. Further, $\nabla H$ is continuous on bounded sets.
Then, choosing the parameters of PALM, resp. iPALM as required by Theorem \ref{thm:PALM_convergence}
resp. \ref{thm:iPALM_convergence}, we conclude that the sequences generated by both algorithms converge to a critical point of $H$
supposed that they are bounded.
Similarly, if we assume in addition that the stochastic gradient estimators are variance-reduced, resp.
inertial variance-reduced, we can conclude that the sequences of SPRING and iSPALM converge as in Theorem \ref{thm:conv_spring}
resp. Theorems \ref{thm:ispring_conv_1} and \ref{thm:ispring_conv_2}, 
if the corresponding requirements on the parameters are fulfilled.
\\

We start with the KL property.

\begin{Lemma}\label{lem:KL_Likelihood}
The function $H: \R^K \times \R^K \times \R^{d\times K} \times \Sym(d)^{K}\to \R$ defined in \eqref {eq:unconstrainted_problem}
is a KL function. Moreover, it is bounded from below.
\end{Lemma}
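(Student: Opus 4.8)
The plan is to treat the two assertions separately, since boundedness from below is elementary whereas the Kurdyka--\L ojasiewicz (KL) property requires identifying a suitable o-minimal structure in which $H$ is definable. The guiding principle for the KL part is the theorem of Bolte, Daniilidis, Lewis and Shiota that every lower semicontinuous function definable in an o-minimal structure satisfies the KL inequality at each of its points; semi-algebraicity, mentioned after Theorem~\ref{thm:PALM_convergence}, is merely the special case of the semi-algebraic structure and will \emph{not} suffice here, because of the transcendental terms $\exp$, $\log$, the non-integer powers and the Gamma factors entering \eqref{pdf}.

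For boundedness from below I would first bound the Student-$t$ density uniformly over the admissible parameters. After the reparametrisation one always has $\varphi_2(\nu)_k \ge \epsilon$ and $\varphi_3(\Sigma)_k \succeq \epsilon I_d$, so $|\varphi_3(\Sigma)_k| \ge \epsilon^d$ and the trailing factor $(1 + \tfrac1\nu (x-\mu)^\tT \Sigma^{-1}(x-\mu))^{-(d+\nu)/2} \le 1$. It then remains to bound the prefactor $\tfrac{\Gamma((d+\nu)/2)}{\Gamma(\nu/2)\,\nu^{d/2}}$ on $[\epsilon,\infty)$: it is continuous there and, by the classical asymptotics $\Gamma(z+\tfrac d2)/\Gamma(z) \sim z^{d/2}$, tends to $2^{-d/2}$ as $\nu\to\infty$, hence is bounded by some $C_1$. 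Consequently $f(x\mid\cdot)\le C\coloneqq C_1\epsilon^{-d/2}$, so $\sum_k \varphi_1(\alpha)_k f(x_i\mid\cdot)\le C$ because $\varphi_1(\alpha)\in\Delta_K$, and therefore $H\ge -\log C>-\infty$.

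For the KL property I would exhibit $H$ as a finite composition of maps definable in the o-minimal structure $\R_{\mathrm{an},\exp}$ (suitably enlarged, see below). The reparametrisations $\varphi_2,\varphi_3$ are polynomial, $\varphi_1$ is the softmax and hence definable via $\exp$, and on the admissible region all arguments of the remaining elementary pieces stay in the domain where those pieces are definable: $\nu^{d/2}=\exp(\tfrac d2\log\nu)$ and $|\Sigma|^{1/2}$ are definable since $\nu\ge\epsilon>0$ and $|\Sigma|\ge\epsilon^d>0$; the factor $(1+\tfrac1\nu q)^{-(d+\nu)/2}=\exp(-\tfrac{d+\nu}{2}\log(1+\tfrac1\nu q))$ with $q=(x-\mu)^\tT\Sigma^{-1}(x-\mu)\ge 0$ is definable because $1+\tfrac1\nu q\ge 1$ for $\nu>0$; and the outer $\log$ in \eqref{eq:logLike} is applied to $\sum_k\varphi_1(\alpha)_k f(x_i\mid\cdot)>0$. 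Since definable functions are closed under sums, products, composition and application of $\exp$ and $\log$ on their domains, the only genuinely new ingredient is the Gamma factor.

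The hard part will be the definability of the Gamma function. For even $d$ the ratio $\Gamma((d+\nu)/2)/\Gamma(\nu/2)$ collapses to the polynomial $\prod_{j=0}^{d/2-1}(\tfrac\nu2+j)$ and no transcendence arises, but for odd $d$ a factor $\Gamma(\tfrac\nu2+\tfrac12)/\Gamma(\tfrac\nu2)$ remains, which is not semi-algebraic. Here I would invoke the result of van den Dries and Speissegger that $\Gamma|_{(0,\infty)}$ is definable in an o-minimal expansion of $\R_{\mathrm{an},\exp}$; composing with $\nu\mapsto\tfrac12\varphi_2(\nu)\ge\tfrac\epsilon2>0$ keeps us inside $(0,\infty)$. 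Granting this, $H$ is a finite composition of definable maps, hence definable and, being $C^2$ and thus continuous, KL at every point; together with the lower bound this completes the proof. The remaining verifications — that each elementary building block is definable and that the relevant domains are respected — are routine once the Gamma definability is in hand.
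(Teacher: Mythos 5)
Your proposal is correct, and its lower-bound half is essentially identical to the paper's own argument: bound the density \eqref{pdf} uniformly using $\abs{\Sigma}^{-1/2}\le\epsilon^{-d/2}$, the trailing factor being at most $1$, and the boundedness of the Gamma prefactor on $[\epsilon,\infty)$ --- your limit $2^{-d/2}$ from $\Gamma(z+\tfrac d2)/\Gamma(z)\sim z^{d/2}$ is in fact the accurate one (the paper states the limit as $1$, but only boundedness is used, so this is harmless on both sides). For the KL property, however, you take a genuinely different and considerably heavier route. The paper simply observes that the Gamma function is real analytic on $(0,\infty)$, so that $H$, being built from sums, products, quotients with nonvanishing denominators (the softmax denominator and $\sum_k\tilde\alpha_k f_{i,k}$ are strictly positive), and compositions of real analytic functions ($\exp$, $\log$ on positive arguments, powers, $\Gamma$), is itself real analytic; real analytic functions satisfy the classical {\L}ojasiewicz gradient inequality and are therefore KL, which is exactly the content of the citation \cite[Example 1]{AB2009} together with \cite{L1963,L1993}. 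Your argument instead goes through definability in an o-minimal structure and the Bolte--Daniilidis--Lewis--Shiota theorem, which forces you to invoke the deep van den Dries--Speissegger result on definability of $\Gamma|_{(0,\infty)}$ in an o-minimal expansion of $\R_{\mathrm{an},\exp}$ to handle the odd-$d$ case (your reduction of the even-$d$ case to the polynomial $\prod_{j=0}^{d/2-1}(\tfrac\nu2+j)$ via the functional equation is a nice observation, and your point that semi-algebraicity alone does not suffice is correct and consistent with the paper). Both routes are valid; the trade-off is that the analyticity argument is shorter and needs no model theory, since every transcendental ingredient you carefully certify as definable is, more elementarily, real analytic on the relevant domain, whereas the o-minimal route would pay off only in settings where analyticity fails --- for instance, nonsmooth or piecewise-defined objectives --- which is not the case for $H$ in \eqref{eq:unconstrainted_problem}.
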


\begin{proof}
1. Since the Gamma function is real analytic, we have that $H$ is a combination of sums, products, 
quotients and concatenations of real analytic functions. Thus $H$ is real analytic. 
This implies that it is a KL function, see \cite[Example 1]{AB2009} and \cite{L1963,L1993}.

2. 
First, we proof that $f(x|\nu,\mu,\Sigma)$ is bounded from above for $\nu>\epsilon, \mu\in\R^d$ and $\Sigma\succeq \epsilon I_d$.
By definition of the Gamma function and since
\begin{align}
\Gamma(\tfrac{\nu+d}{2})/\Gamma(\tfrac\nu2)\nu^{\tfrac{d}2} \to1\quad\text{as}\quad \nu\to\infty\label{eq:gamm_to_infty}
\end{align}
we have that \eqref{eq:gamm_to_infty} is bounded from below for $\nu\in[\epsilon,\infty)$. 
Further, we see by assumptions on $\nu$ and $\Sigma$ that 
$
\abs{\Sigma}^{-\tfrac12} \leq \epsilon^{-\tfrac{d}{2}}$
and
$\left( 1+ \tfrac1\nu (x-\mu)^\tT \Sigma^{-1} (x - \mu) \right)^{-\tfrac{d+\nu}{2}} \leq 1$.
Thus, $f(x|\nu,\mu,\Sigma)$ is the product of bounded functions and therefore itself bounded
by some $C>0$. 
This yields for 
$\tilde \alpha=\varphi_1(\alpha)$, $\tilde \nu = \varphi_2 (\nu)$ 
and $\tilde \Sigma = \varphi_3 (\Sigma)$ that 
$$
-\sum_{i=1}^n\log\Big(\sum_{k=1}^K \tilde \alpha_k f(x_i| \tilde \nu_k,\tilde \mu_k,\tilde \Sigma_k)\Big)
\leq-\sum_{i=1}^n\log\Big(\sum_{k=1}^K \tilde \alpha_k C\Big) \leq - n\log C,
$$
which finishes the proof.
\end{proof}

Here are the Lipschitz properties of $H$.

\begin{Lemma}\label{lem:Lipschitz_Likelihood}
For $H: \R^K \times \R^K \times \R^{d\times K} \times \Sym(d)^{K}\to \R$ defined by \eqref {eq:unconstrainted_problem} 
and all
$\alpha\in\R^K$,
$\nu\in\R^K$, 
$\mu\in\R^{d\times K}$ and $\Sigma\in\R^{d\times d\times K}$
we have that the gradients
$\nabla_{\alpha} H(\cdot,\nu,\mu,\Sigma)$,
$\nabla_{\nu} H (\alpha,\cdot,\mu,\Sigma)$,
$\nabla_\mu H(\alpha,\nu,\cdot,\Sigma)$, and
$\nabla_{\Sigma}H(\alpha,\nu,\mu,\cdot)$ 
are globally Lipschitz continuous.
\end{Lemma}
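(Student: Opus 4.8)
The plan is to reduce the claim to a uniform bound on the block‑Hessians of $H$. Since $H\in C^2$, each partial‑gradient map (for instance $\mu\mapsto\nabla_\mu H(\alpha,\nu,\cdot,\Sigma)$) is $C^1$, and its global Lipschitz continuity is equivalent to a uniform bound on the corresponding Hessian block $\nabla^2_{\mu\mu}H$; by continuity such a block is automatically bounded on compacta, so it suffices to control it as the relevant block of variables tends to infinity, the remaining blocks and the data $\mathcal X$ being fixed. To organise the computation I would abbreviate $\tilde\nu\coloneqq\varphi_2(\nu)_k$, $\tilde\Sigma\coloneqq\varphi_3(\Sigma)_k$, set $f_{ik}\coloneqq f(x_i|\tilde\nu,\mu_k,\tilde\Sigma)$, $p_i\coloneqq\sum_k\varphi_1(\alpha)_k f_{ik}$ and introduce the responsibility weights $w_{ik}\coloneqq\varphi_1(\alpha)_k f_{ik}/p_i\in[0,1]$ with $\sum_k w_{ik}=1$. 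Writing $H=-\tfrac1n\sum_i\log p_i$ and differentiating, for the three blocks $\nu,\mu,\Sigma$ — where the $k$‑th component's parameter enters only $f_{ik}$ — every entry of the Hessian block is a combination of the $w_{ik}\in[0,1]$ with the first and second derivatives of $\log f_{ik}$ in that variable (using $\partial^2 f/f=\partial^2\log f+(\partial\log f)^2$). Hence for these blocks the problem reduces to showing that $\log f$ has uniformly bounded first and second derivatives in the block variable, while the $\alpha$‑block I would treat directly.

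For the $\alpha$‑block this is immediate: with $\nu,\mu,\Sigma$ fixed the numbers $f_{ik}=c_{ik}>0$ are constants, so $p_i=\sum_k\varphi_1(\alpha)_k c_{ik}$ is a convex combination and satisfies $0<\min_k c_{ik}\le p_i\le\max_k c_{ik}$ uniformly in $\alpha$; since the softmax $\varphi_1$ and all its derivatives are bounded, $\nabla^2_{\alpha\alpha}H$ is globally bounded. For the $\mu$‑block ($\mu$ enters $\log f$ only through $s\coloneqq(x-\mu)^\tT\tilde\Sigma^{-1}(x-\mu)$) I would compute $\nabla_\mu\log f=(d+\tilde\nu)\,\tilde\Sigma^{-1}(x-\mu)/(\tilde\nu+s)$ and use $\tilde\Sigma^{-1}\preceq\epsilon^{-1}I_d$ together with $s\ge\|x-\mu\|^2/\lambda_{\max}(\tilde\Sigma)$ to see that this, and analogously the $\mu$‑Hessian of $\log f$, stays bounded (in fact tends to $0$) as $\|\mu\|\to\infty$. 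The $\nu$‑block passes through $\tilde\nu=\nu^2+\epsilon$: using the digamma asymptotics $\psi(x)=\log x+O(1/x)$ one checks that $\partial_{\tilde\nu}\log f=O(\tilde\nu^{-1})$ and $\partial^2_{\tilde\nu}\log f=O(\tilde\nu^{-2})$ (the leading contributions in fact cancel, giving faster decay), whereupon the chain rule $\partial_\nu=2\nu\,\partial_{\tilde\nu}$ and $\partial^2_\nu=4\nu^2\partial^2_{\tilde\nu}+2\partial_{\tilde\nu}$ with $\nu^2=\tilde\nu-\epsilon$ keeps both derivatives bounded.

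The hard part will be the $\Sigma$‑block, because the reparametrisation $\varphi_3(\Sigma)_k=\Sigma_k^\tT\Sigma_k+\epsilon I_d=\Sigma_k^2+\epsilon I_d$ grows quadratically. Writing $A\coloneqq\Sigma^2+\epsilon I_d$, $r\coloneqq x-\mu$ and $G\coloneqq\nabla_A\log f=-\tfrac12 A^{-1}+\tfrac{d+\tilde\nu}{2(\tilde\nu+s)}A^{-1}rr^\tT A^{-1}$, the chain rule through $\Sigma\mapsto\Sigma^2$ gives $\nabla_\Sigma\log f=\Sigma G+G\Sigma$. A naive estimate bounds $G$ only by a constant, and multiplying by $\|\Sigma\|\sim\sqrt{\|A\|}$ would diverge; the decisive observation is that $\Sigma$ commutes with $A=\Sigma^2+\epsilon I_d$, so $\Sigma A^{-1}=\Sigma(\Sigma^2+\epsilon I_d)^{-1}$ has spectrum $\{\sigma_j/(\sigma_j^2+\epsilon)\}$ and hence $\|\Sigma A^{-1}\|\le\tfrac1{2\sqrt\epsilon}$ uniformly. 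Thus every $\Sigma$‑factor produced by differentiation pairs with an $A^{-1}$ into a uniformly bounded quantity, and with $\|r\|=\|x-\mu\|$ fixed and $\tilde\nu+s\ge\tilde\nu\ge\epsilon$ one obtains a uniform bound on $\nabla_\Sigma\log f$. For the second derivative the only new ingredient is $D^2_\Sigma(\Sigma^2)[\Delta,\Delta']=\Delta\Delta'+\Delta'\Delta$, which carries no $\Sigma$‑factor and pairs with the bounded $G$, while the remaining term again relies on the boundedness of $\Sigma A^{-1}$; hence $\nabla^2_{\Sigma\Sigma}\log f$ is uniformly bounded as well.

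Combining the four cases with the continuity of all second‑order partial derivatives (which yields boundedness on compact sets), each block‑Hessian of $H$ is globally bounded, and therefore the four partial gradients in the statement are globally Lipschitz continuous.
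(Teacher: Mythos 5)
Your proposal is correct, and its substance coincides with the paper's proof, though you package it differently. The paper works directly with the gradient: it factors each block gradient of a summand as $\tilde\alpha_l\,(\gamma f_l)\,g_2$ (where $\gamma f_l$ is, up to $\tilde\alpha_l$, exactly your responsibility weight) and shows each factor is both bounded and Lipschitz by differentiating the factors one by one; you instead invoke the mean value inequality and reduce everything to a uniform bound on the block Hessian of $H$, which your responsibility-weight identity expresses through $w_{ik}\in[0,1]$ and the first and second derivatives of $\log f_{ik}$ in the block variable. The key estimates underneath are the same in both proofs: boundedness of the softmax and its derivatives for the $\alpha$-block; digamma asymptotics for the $\nu$-block (the paper uses the summation formula $\psi(x+1)=\psi(x)+\tfrac1x$ and explicit limits where you use $\psi(x)=\log x+O(1/x)$ — your parenthetical cancellation giving $O(\tilde\nu^{-2})$ is true but not needed, since the crude rates already survive the chain rule through $\tilde\nu=\nu^2+\epsilon$); the quadratic growth of $s$ against the linear growth of $\tilde\Sigma^{-1}(x-\mu)$ for the $\mu$-block; and, decisively for the $\Sigma$-block, the same commuting-spectral observation that $\Sigma(\Sigma^2+\epsilon I_d)^{-1}$ has eigenvalues $\sigma_j/(\sigma_j^2+\epsilon)$ and is therefore uniformly bounded, so that every $\Sigma$-factor created by differentiating through $\varphi_3$ is absorbed by an $A^{-1}$ (or, in the second derivative, appears as $\Sigma^2A^{-1}\preceq I_d$ after commuting). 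A small point in your favor: your constant $\tfrac1{2\sqrt\epsilon}$ for $\|\Sigma A^{-1}\|$ is the sharp uniform bound, whereas the paper asserts the eigenvalues of $\tilde\Sigma_l^{-1}\Sigma_l$ are smaller than $1$, which only holds for $\epsilon\geq\tfrac14$ — harmless there, since only boundedness is used. What your organization buys is a cleaner bookkeeping: one uniform Hessian bound replaces the paper's repeated "bounded times Lipschitz" product arguments and handles the cross-component Hessian entries automatically; what the paper's route buys is that it never needs second derivatives of $H$ itself, only derivatives of the low-dimensional factors $g_1,g_2$.
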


The technical proof of the lemma is given in Appendix \ref{sec:proof}.

\section{Numerical Results} \label{sec:numerics}

In this section, we apply iSPALM with SARAH gradient estimator (iSPALM-SARAH) 
for two different applications and compare it with PALM, iPALM and SPRING-SARAH. 
We found numerically, that it increases the stability of SPRING-SARAH and iSPALM-SARAH if we enforce the evaluation of the full gradient at the beginning of each epoch.
We run all our experiments on a Lenovo ThinkStation with Intel i7-8700 processor, 32GB RAM and a NVIDIA GeForce GTX 2060 Super GPU. 
For the implementation we use Python and Tensorflow.

\subsection{Parameter Choice and Implementation Aspects}

On the one hand, the algorithms based on PALM have many parameters
which enables a high adaptivity of the algorithms to the specific problems. 
On the other hand, it is often hard to fit these parameters 
to ensure the optimal performance of the algorithms. 

Based on approximations $\tilde L_1(x_2^k)$ and $\tilde L_2(x_1^{k+1})$
of the partial Lipschitz constants $L_1(x_2^k)$ and $L_2(x_1^{k+1})$
outlined below, we use the following step size parameters $\tau_i^k$, $i=1,2$:
\begin{itemize}
\item For \textbf{PALM} and \textbf{iPALM}, 
we choose $\tau_1^k=\tilde L_1(x_1^k,x_2^k)$ and $\tau_2^k=\tilde L_2(x_1^{k+1},x_2^k)$
which was also suggested in \cite{BST2014,PS2016}.
\item For \textbf{SPRING-SARAH} and \textbf{iSPALM-SARAH}, 
we choose $\tau_1^k=s_1\tilde L_1(x_1^k,x_2^k)$ and $\tau_2^k=s_1\tilde L_2(x_1^{k+1},x_2^k)$, 
where the manually chosen scalar $s_1 > 0$ depends on the application.
Note that the authors in \cite{DTLDS2020} propose to take $s_1=2$ which was not optimal
in our examples.
\end{itemize}

\paragraph{Computation of Gradients and Approximative Lipschitz Constants}

Since the global and partial Lipschitz constants of $H$ are usually unknown, 
we estimate them locally using the second order derivative of $H$ which exists in our examples.
If $H$ acts on a high dimensional space, it is often computationally to
costly to compute the full Hessian matrix. 
Thus we compute a local Lipschitz constant only in the gradient direction, 
i.e.\ we compute 
\begin{equation}\label{approx_Lip}
\tilde L_i(x_1,x_2) \coloneqq \|\nabla_{x_i}^2 H(x_1,x_2)g\|, \quad
g\coloneqq \frac{\nabla_{x_i} H(x_1,x_2)}{\|\nabla_{x_i}H(x_1,x_2)\|}
\end{equation}
For the stochastic algorithms we replace $H$ by the approximated function $\tilde H(x_1,x_2)\coloneqq\tfrac1b\sum_{i\in B_i^k}h_i(x_1,x_2)$, 
where $B_i^k$ is the current mini-batch.
The analytical computation of $\tilde L_i$ in \eqref{approx_Lip} is still hard. 
Even computing the gradient of a complicated function $H$ can be error prone and laborious. 
Therefore, we compute the (partial) gradients of $H$ or $\tilde H$, respectively, 
using the reverse mode of algorithmic differentiation (also called backpropagation), see e.g.\ \cite{GW2008}.
To this end, note that the chain rule yields that
\begin{align}
\left\|\nabla_{x_i}\left(\|\nabla_{x_i}H(x_1,x_2)\|^2\right)\right\|&=\left\|2\|\nabla_{x_i}H(x_1,x_2)\| \nabla_{x_i}^2 H(x_1,x_2)\nabla_{x_i}H(x_1,x_2)\right\|\\
&=2\|\nabla_{x_i}H(x_1,x_2)\|^2\tilde L_i(x_1,x_2).
\end{align}
Thus, we can compute $\tilde L_i(x_1,x_2)$ by applying two times the reverse mode.
If we neglect the taping, the execution time of this procedure can provably be bounded 
by a constant times the execution time of $H$, see \cite[Section 5.4]{GW2008}. 
Therefore, this procedure gives us an accurate and computationally very efficient estimation of the local partial Lipschitz constant.

\paragraph{Inertial Parameters}

For the iPALM and iSPALM-SARAH we have to choose the inertial parameters $\alpha_i^k\geq0$ and $\beta_i^k\geq0$. 
With respect to our convergence results we have to assume that there exist 
$\alpha_i^k\leq \bar\alpha_i<\tfrac12$ and $\beta_i^k\leq\bar\beta_i<1$, $i=1,2$. 
Note that for convex functions $f$ and $g$, the authors in \cite{PS2016} proved  that
the assumption on the $\alpha$'s can be lowered to $\alpha_i^k\leq \bar\alpha_i<1$ and suggested to use
$\alpha_i^k=\beta_i^k=\frac{k-1}{k+2}$.
Unfortunately, we cannot show this for iSPALM
and indeed we observe instability and divergence in iSPALM-SARAH, 
if we choose $\alpha_i^k>\frac12$.
Therefore, we choose for iSPALM-SARAH the parameters
\begin{align}
\alpha_i^k=\beta_i^k= s_2\frac{k-1}{k+2},
\end{align}
where the scalar $0<s_2<1$ is manually chosen depending on the application.


\paragraph{Implementation}
We provide a general framework for implementing PALM, iPALM, SPRING-SARAH and
iSPALM-SARAH\footnote{\url{https://github.com/johertrich/Inertial-Stochastic-PALM}} on a GPU. Using this framework, it suffice to
provide an implementation for the functions $H$ and $\prox_{\tau_i}^{f_i}$ in order to use one of the
above algorithms for the function $F(x_1,...,x_K)=H(x_1,...,x_K)+\sum_{i=1}^K f_i(x_i)$. We provide also the code of our numerical examples below on this website.

\subsection{Student-$t$ Mixture Models}
We  estimate the parameters of the Student-$t$ MM \eqref{eq:unconstrainted_problem}
with $K$ components and data points $\mathcal X =(x_1,...,x_n)\in\R^{d\times n}$. 
We generate the data by sampling from a Student-$t$ MM as described above. 
The parameters of the ground truth MM are generate as follows:
\begin{itemize}
\item We generate $\alpha=\frac{\bar\alpha^2+1}{\|\bar\alpha^2+1\|_1}$, where the entries of $\bar\alpha\in\R^K$ are drawn independently from the standard normal distribution.
\item We generate $\nu_i=\min(\bar\nu_i^2+1,100)$, where $\bar\nu_i$, $i=1,\ldots,n$ is drawn from a normal distribution with mean $0$ 
and standard deviation $10$.
\item The entries of $\mu\in\R^{d\times K}$ are drawn independently from a normal distribution with mean $0$ and standard deviation $2$.
\item We generate 
$\Sigma_i=\bar\Sigma_i^T\bar\Sigma_i + I$, where the entries of $\bar\Sigma_i\in\R^{d\times d}$ 
are drawn independently from the standard normal distribution.
\end{itemize}
For the initialization of the algorithms, we assign to each sample $x_i$ randomly a class $c_i\in\{1,...,K\}$. 
Then we initialize the parameters $(\nu_k,\mu_k,\Sigma_k)$ by estimating the parameters of a Student-t distribution of all samples with $c_i=k$ using a faster alternative of the EM algorithm called multivariate myriad filter, see \cite{HHLS2019}.
Further we initialize $\alpha$ by $\alpha_k=\frac{|\{i\in\{1,...,N\}:c_i=k\}|}{N}$.
We run the algorithm for $n=200000$ data points of dimension $d=10$ and $K=30$ components. We use a batch size of $b=20000$.
To represent the randomness in SPRING-SARAH and iSPALM-SARAH, we repeat the experiment $100$ times with the same samples and the same initialization.
The resulting mean and standard deviation of the negative log-likelihood values versus the number of epochs and the execution times, respectively, 
are given in Figure \ref{fig:results_student_t}.
Further, we visualize the mean squared norm of the gradient after each epoch.
One epoch contains for SPRING-SARAH and iSPALM-SARAH $10$ steps and for PALM and iPALM $1$ step. 
We see that in terms of the number of epochs as well as in terms of the execution time the iSPALM-SARAH is the fastest algorithm.

\begin{figure}[t]
\begin{subfigure}[t]{0.5\textwidth}
\centering
\includegraphics[width=\textwidth]{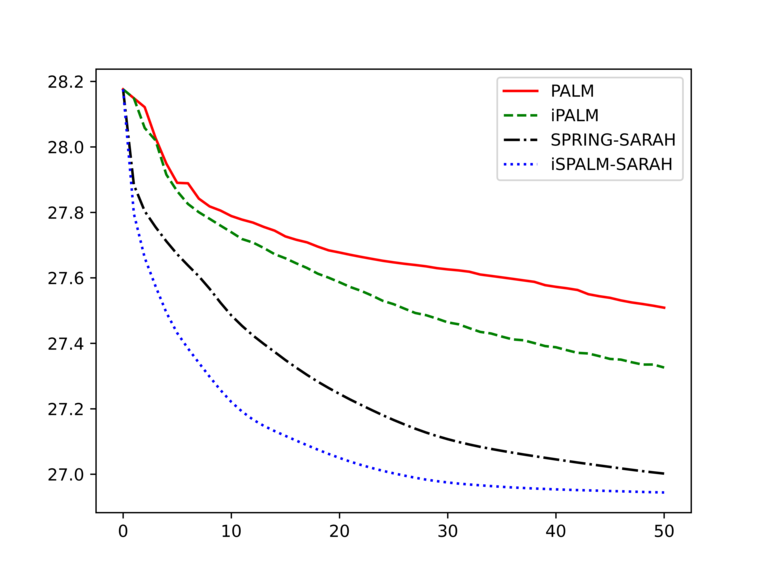}
\caption{Average objective versus epochs}
\end{subfigure}\hfill
\begin{subfigure}[t]{0.5\textwidth}
\centering
\includegraphics[width=\textwidth]{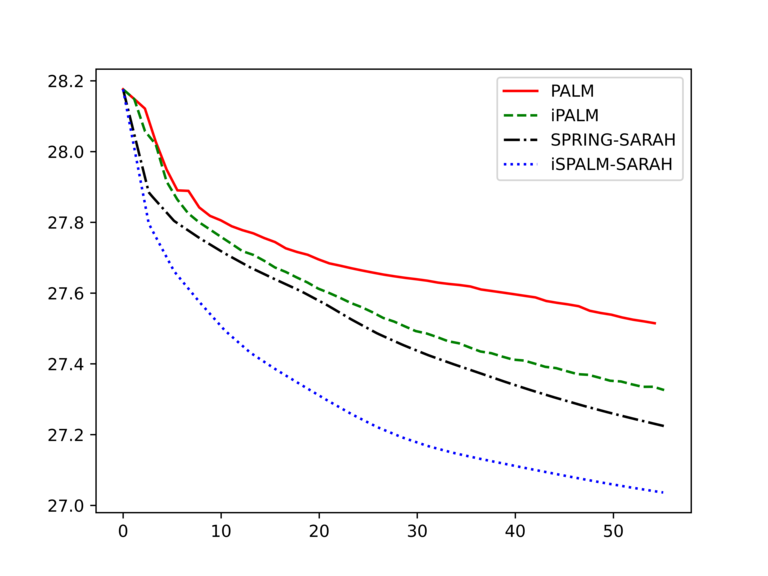}
\caption{Average objective versus execution time}
\end{subfigure}
\begin{subfigure}[t]{0.5\textwidth}
\centering
\includegraphics[width=\textwidth]{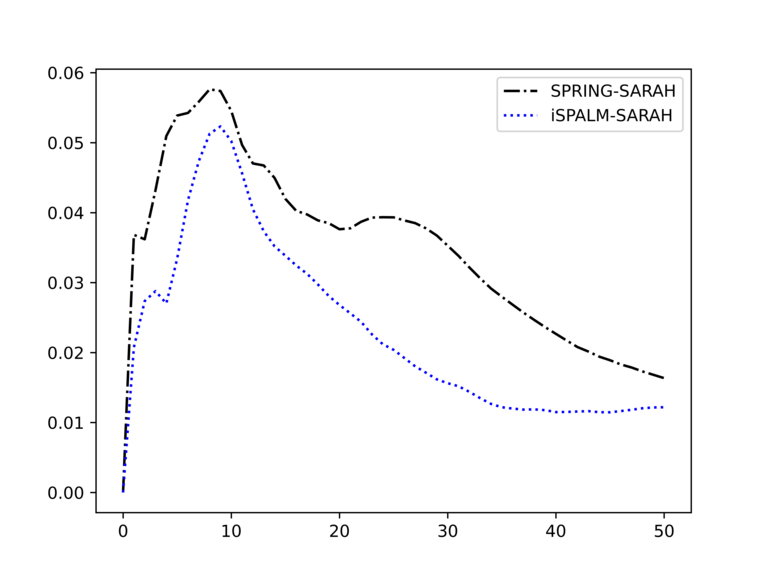}
\caption{Standard deviation of the objective versus epochs}
\end{subfigure}
\begin{subfigure}[t]{0.5\textwidth}
\centering
\includegraphics[width=\textwidth]{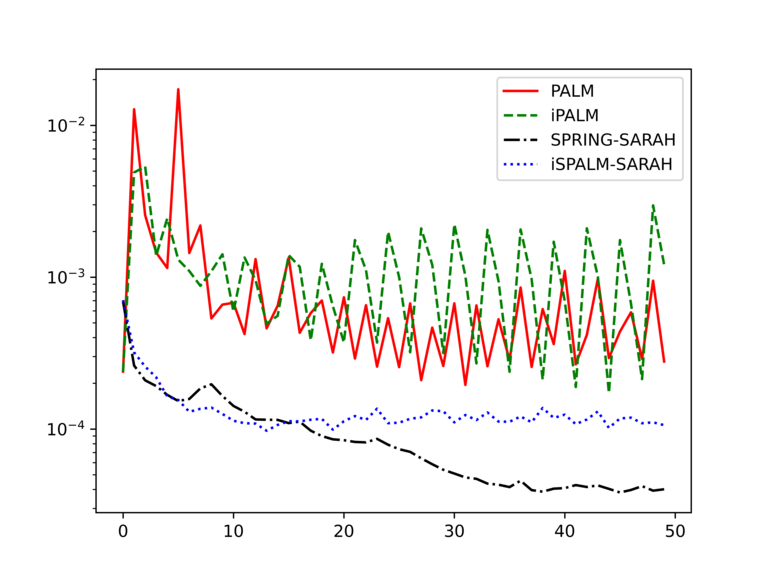}
\caption{Average squared norm of the gradient versus epochs}
\end{subfigure}
\caption{Objective function versus number of epochs and versus execution time for estimating the parameters of Student-$t$ MMs}
\label{fig:results_student_t}
\end{figure}

\subsection{Proximal Neural Networks (PNNs)}

\paragraph{PNNs for MNIST classification}
In this example, we train a Proximal Neural Network as introduced in \cite{HHNPSS2020} for classification on the MNIST data set\footnote{\url{http://yann.lecun.com/exdb/mnist}}. The training data consists of $N=60000$ images $x_i\in\R^d$ of size $d=28^2$ and labels $y_i\in\{0,1\}^{10}$, where the $j$th entry of $y_i$ is $1$ if and only if $x_i$ has the label $j$.
A PNN with $K-1$ layers and activation function $\sigma$ is defined by
$$
T_{K-1}^\tT\sigma(T_{K-1} ... T_1^\tT\sigma(T_1x+b_1) ... +b_{K-1}),
$$
where the $T_i$ are contained in the (compact) Stiefel manifold $\St(d,n_i)$ and $b_i\in\R^{n_i}$ for $i=1,...,K-1$.
To get $10$ output elements in $(0,1)$, we add similar as in \cite{HHNPSS2020} an additional layer
$$
g(T_Kx),\quad T_K\in[-10,10]^{10,d}, b_K\in\R^{10}
$$
with the activation function $g(x)\coloneqq\tfrac{1}{1+\exp(-x)}$.
Thus the full network is given by
$$
\Psi(x,u)=g(T_KT_{K-1}^\tT\sigma(T_{K-1}...T_1^\tT\sigma(T_1x+b_1)+...+b_{K-1}) +b_K),\quad u=(T_1,...,T_K,b_1,...,b_K).
$$
It was demonstrated in \cite{HHNPSS2020} that this kind of network is more stable under adversarial attacks than the same network without the orthogonality constraints.

\paragraph{Training PNNs with iSPALM-SARAH}

Now, we want to train a PNN with $K-1=3$ layers and $n_1=784$, $n_2=400$ and $n_3=200$ for MNIST classification. In order of applying our theory, we use the exponential linear unit (ELU)
$$
\sigma(x)=\begin{cases}\exp(x)-1,&$if $x<0,\\x&$if $x\ge 0,\end{cases}
$$
as activation function, which is differentiable with a $1$-Lipschitz gradient.
Then, the loss function is given by
$$
F(u)=H(u)+f(u),\quad u=(T_1,...,T_4,b_1,...,b_4)
$$
where $T_i \in \R^{d,n_i}$, $b_i \in \mathbb R^{n_i}$, $i=1,2,3$, and
$T_4 \in [-10,10]^{10,d}$, $b_4 \in \R^{10}$,
and $f(u)=\iota_{\mathcal U}$ with
$$
\mathcal U\coloneqq\{(T_1,...,T_4,b_1,...,b_4):T_i\in\St(d,n_i), i=1,2,3, T_4\in[-10,10]^{10,d}\}.
$$
and
$$
H(u)\coloneqq\frac1N\sum_{i=1}^N \|\Psi(x_i,u) - y_i\|^2.
$$

Since $H$ is unfortunately  not Lipschitz continuous, we propose a slight modification. 
Note that for any $u=(T_1,...,T_4,b_1,...,b_4)$ which appears as $x^k$, $y^k$ or $z^k$ in PALM, iPALM, SPRING-SARAH or iSPALM-SARAH we have that there exist $v,w\in \mathcal U$ such that $u=v+w$. 
In particular, we have that $\|T_i\|_F\leq 2\sqrt{d}$, $i=1,2,3$ and $\|T_4\|_F\leq 20\sqrt{10 d}$
Therefore, we can replace $H$ by
$$
\tilde H(u)=\Pi_{i=1}^4\eta(\|T_i\|_F^2)\frac1N\sum_{i=1}^N \| \Psi(x_i,u)- y_i\|^2,
$$
without changing the algorithm, where $\eta$ is a smooth cutoff function of the interval $(-\infty,4000 d]$. 
Now, simple calculations yield that the function $\tilde H$ is globally Lipschitz continuous.
Since it is also bounded from below by $0$ we can conclude that our convergence results of iSPALM-SARAH are applicable.

\begin{Remark}
For the implementation, we need to calculate $\prox_{\tilde f}$, which is the orthogonal projection $P_{\mathcal U}$ onto $\mathcal U$. 
This includes the projection of the matrices $T_i$, $i=1,2,3$ onto the Stiefel manifold. 
In \cite[Section 7.3,7.4]{HS2013} it is shown, that the projection of a matrix $A$ onto the Stiefel manifold is given by the $U$-factor of the polar decomposition $A=US\in\R^{d,n}$, where $U\in\St(d,n)$ and $S$ is symmetric and positive definite.
Note that $U$ is only unique, if $A$ is non-singular.
Several possibilities for the computing $U$ are considered in \cite[Chapter 8]{Higham08}.
In particular, $U$ is given by $VW$, where $A=V\Sigma W$ is the singular value decomposition of $A$. 
For our numerical experiments we use the iteration
$$
Y_{k+1}=2Y_k(I+Y_k^\tT Y_k)^{-1}
$$
with $Y_0=A$, which converges for any non-singular $A$ to $U$, see \cite{Higham08}.
\hfill $\Box$
\end{Remark}

Now we run PALM, iPALM, SPRING-SARAH and iSPRING-SARAH algorithms for $200$ epochs using a batch size of $b=1500$. One epoch contains for SPRING-SARAH and iSPALM-SARAH $40$ steps and for PALM and iPALM $1$ step. 
We repeat the experiment 10 times with the same initialization and plot for the resulting loss functions mean and standard deviation to represent the randomness of the algorihtms.
Figure \ref{fig:results_PNNs} shows the mean and standard deviation of the loss versus the number of epochs or the execution time 
as well as the squared norm of the Riemannian gradient for the iterates of iSPALM-SARAH after each epoch.
We observe that iSPALM-SARAH performs much better than SPRING-SARAH and that iPALM performs much better than PALM.
Therefore this example demonstrates the importance of the inertial parameters in iPALM and iSPALM-SARAH.
Further, iSPALM-SARAH and SPRING-SARAH outperform their deterministic versions significantly.
The resulting weights from iSPALM-SARAH reach after $200$ epochs an average accuracy of $0.985$ on the test set.

\begin{figure}[!ht]
\begin{subfigure}[t]{0.5\textwidth}
\centering
\includegraphics[width=\textwidth]{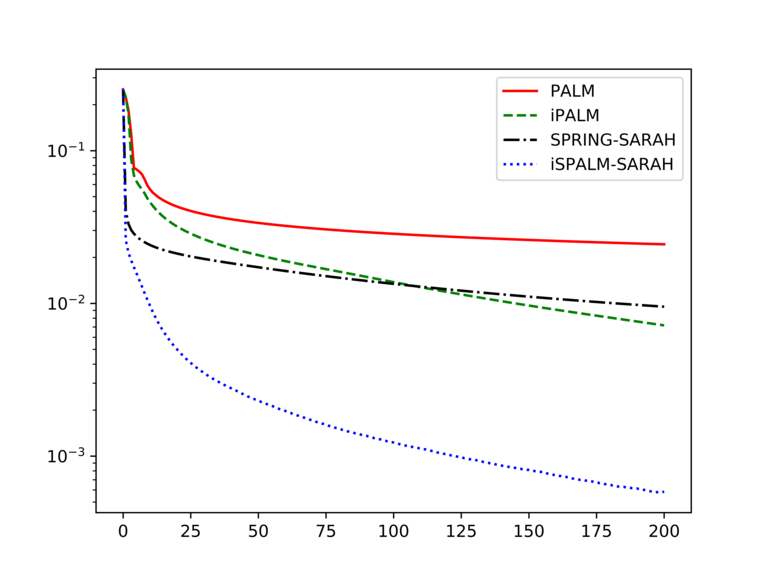}
\caption{Average loss versus epochs on the training set.}
\end{subfigure}\hfill
\begin{subfigure}[t]{0.5\textwidth}
\centering
\includegraphics[width=\textwidth]{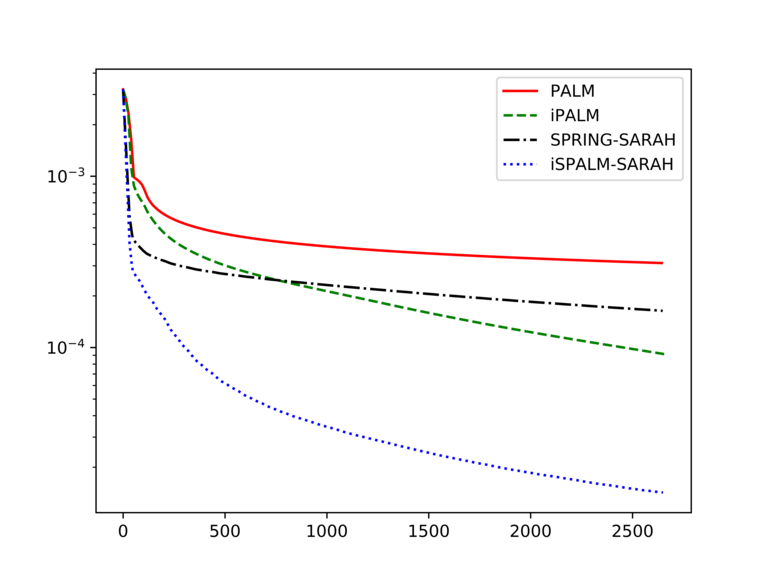}
\caption{Average loss versus execution time on the training set.}
\end{subfigure}
\begin{subfigure}[t]{0.5\textwidth}
\centering
\includegraphics[width=\textwidth]{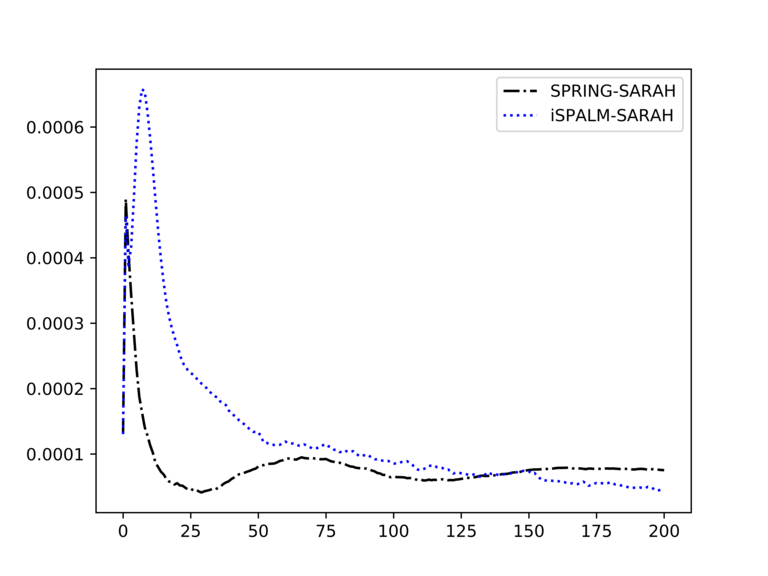}
\caption{Standard deviation of the loss versus execution time on the training set.}
\end{subfigure}\hfill
\begin{subfigure}[t]{0.5\textwidth}
\centering
\includegraphics[width=\textwidth]{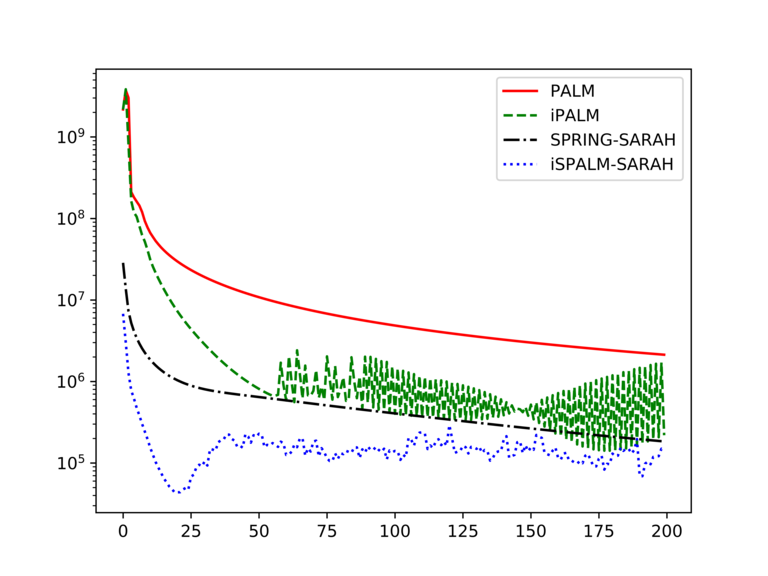}
\caption{Riemannian gradient of the loss versus epochs on the training set.}
\end{subfigure}
\begin{subfigure}[t]{0.5\textwidth}
\centering
\includegraphics[width=\textwidth]{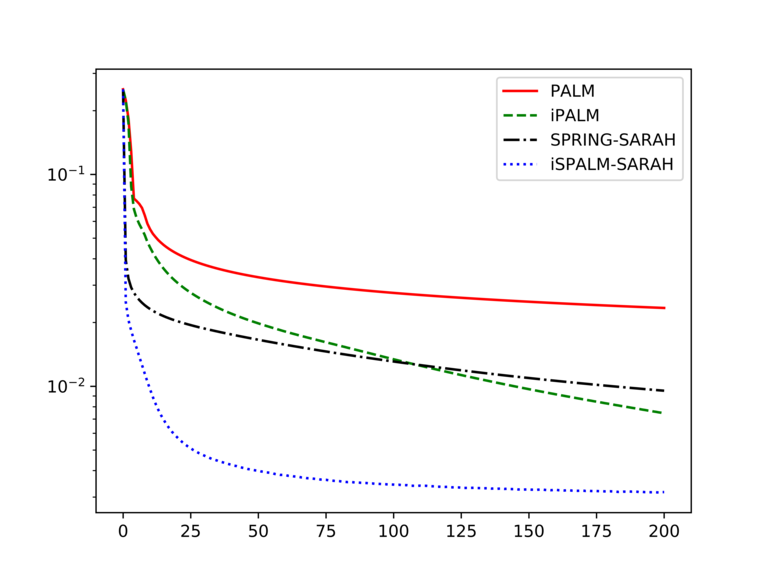}
\caption{Average loss versus epochs on the test set.}
\end{subfigure}\hfill
\begin{subfigure}[t]{0.5\textwidth}
\centering
\includegraphics[width=\textwidth]{imgs/PNNs_train_times}
\caption{Average loss versus execution time on the test set.}
\end{subfigure}
\caption{Loss function versus number of epochs and versus execution time for training a PNN for MNIST classification.}
\label{fig:results_PNNs}
\end{figure}

\section{Conclusions} \label{sec:concl}
We combined a stochastic variant of the PALM algorithm 
with the inertial PALM algorithm 
to a new algorithm, called iSPALM. 
We analyzed the convergence behavior of iSPALM and proved convergence results,
if the gradient estimators is  inertial variance-reduced.
In particular, we showed that the expected distance of the subdifferential to zero converges to zero for the sequence of iterates generated by iSPALM. Additionally the sequence of function values achieves linear convergence for functions satisfying a global error bound. 
We proved that a modified version of the negative log-likelihood function of Student-$t$ MMs 
fulfills all necessary convergence assumption 
of PALM, iPALM.
We demonstrated the performance of iSPALM for two quite different applications. 
In the numerical comparison, it turns out that iSPALM shows the best performance of all four algorithms.
In particular, the example with the PNNs demonstrates the importance of combining inertial parameters and stochastic gradient estimators for learning applications.

For future work, it would be interesting to compare the performance of the iSPALM algorithm 
with more classical algorithms for estimating the parameters of Student-$t$ MMs, 
in particular with the EM algorithm 
and some of its accelerations. 
For first experiments in this direction we refer to our work \cite{HHLS2019,Hertrich2020}. 

Further, Driggs et al. \cite{DTLDS2020} proved tighter convergence rates for SPRING if the objective function is semi-algebraic. 
Whether these convergence rates also hold true for iSPALM is still open. 

Finally, we intend to apply iSPALM to other practical problems as e.g.\ in more sophisticated examples of deep learning.

\appendix
\section{KL Functions}\label{sec:KL}
Finally, let us recall the notation of Kurdyka-{\L}ojasiewicz functions.
For $\eta\in(0,\infty]$, we denote by $\Phi_\eta$ the set of all concave continuous functions $\phi\colon[0,\eta)\to\R_{\geq 0}$ which fulfill the following properties:
\begin{enumerate}[(i)]
\item $\phi(0)=0$.
\item $\phi$ is continuously differentiable on $(0,\eta)$.
\item For all $s\in(0,\eta)$ it holds $\phi'(s)>0$.
\end{enumerate}

\begin{Definition}[Kurdyka-{\L}ojasiewicz property]
A  proper, lower semicontinuous function $\sigma\colon\R^d\to(-\infty,+\infty]$  
has the Kurdyka-{\L}ojasieweicz (KL) property at $\bar u\in\dom\partial\sigma=\{u\in\R^d:\partial\sigma\neq\emptyset\}$ 
if there exist $\eta\in(0,\infty]$, a neighborhood $U$ of $\bar u$ and a function $\phi\in\Phi_\eta$, such that for all
$$
u\in U\cap\{v\in\R^d:\sigma(\bar u) < \sigma(v) < \sigma(\bar u)+\eta\},
$$
it holds
$$
\phi'(\sigma(u)-\sigma(\bar u))\dist(0,\partial\sigma(u))\geq 1.
$$
We say that $\sigma$ is a KL function, if it satisfies the KL property in each point $u\in\dom\partial\sigma$.
\end{Definition}

\section{Proof of Proposition \ref{prop:sarah-ivr}} \label{app:sarah}
The proof follows the path of those in \cite[Proposition 2.2]{DTLDS2020}. 
Let $\E_{k,p}=\E(\cdot|(x_1^1,x_2^1),...,(x_1^k,x_2^k),p_1^k)$ denote the expectation conditioned on the first $k$ iterations 
and the event that we do not compute the full gradient at the $k$-th iteration in \eqref{sarah}, $k \ge 1$.
Then we get
\small{
\begin{align}
\E_{k,p}\left(\tilde\nabla_{x_1}H(z_1^k,x_2^k) \right)
&=\tfrac1b \E_{k,p}
\Big(\sum_{i\in B_1^k}\nabla_{x_1}h_i(z_1^k,x_2^k)-\nabla_{x_1}h_i(z_1^{k-1},x_2^{k-1})\Big)
+\tilde \nabla_{x_1}H(z_1^{k-1},x_2^{k-1})\\
&=\nabla_{x_1} H(z_1^k,x_2^k)-\nabla_{x_1}H(z_1^{k-1},x_2^{k-1})+\tilde\nabla_{x_1} H(z_1^{k-1},x_2^{k-1}), \label{ivr_1}
\end{align}
}
and further
\begin{align}
&\E_{k,p}\left(\|\tilde\nabla_{x_1}H(z_1^k,x_2^k)-\nabla_{x_1}H(z_1^k,x_2^k)\|^2\right)\\
&= 
\E_{k,p}\left(\|\tilde\nabla_{x_1}H(z_1^{k-1},x_2^{k-1})-\nabla_{x_1}H(z_1^{k-1},x_2^{k-1})
+\nabla_{x_1}H(z_1^{k-1},x_2^{k-1})-\nabla_{x_1}H(z_1^k,x_2^k)\right.\\
&\left.+\tilde\nabla_{x_1}H(z_1^k,x_2^k)-\tilde\nabla_{x_1}H(z_1^{k-1},x_2^{k-1})\|^2\right)
\\
&=
\|\tilde\nabla_{x_1}H(z_1^{k-1},x_2^{k-1})-\nabla_{x_1}H(z_1^{k-1},x_2^{k-1})\|^2
+\|\nabla_{x_1}H(z_1^{k-1},x_2^{k-1})-\nabla_{x_1}H(z_1^k,x_2^k)\|^2\\
&+\E_{k,p}\left(\|\tilde\nabla_{x_1}H(z_1^k,x_2^k)-\tilde\nabla_{x_1}H(z_1^{k-1},x_2^{k-1})\|^2\right)\\
&+2\left\langle\tilde\nabla_{x_1}H(z_1^{k-1},x_2^{k-1})-\nabla_{x_1}H(z_1^{k-1},x_2^{k-1}), 
\nabla_{x_1}H(z_1^{k-1},x_2^{k-1})-\nabla_{x_1}H(z_1^k,x_2^k)\right\rangle\\
&+2\left\langle\tilde\nabla_{x_1}H(z_1^{k-1},x_2^{k-1})-\nabla_{x_1}H(z_1^{k-1},x_2^{k-1}),
\E_{k,p}\left(\tilde\nabla_{x_1}H(z_1^k,x_2^k)-\tilde\nabla_{x_1}H(z_1^{k-1},x_2^{k-1})\right)\right\rangle\\
&+2\left\langle\nabla_{x_1}H(z_1^{k-1},x_2^{k-1})-\nabla_{x_1}H(z_1^k,x_2^k),\E_{k,p}
\left(\tilde\nabla_{x_1}H(z_1^k,x_2^k)-\tilde\nabla_{x_1}H(z_1^{k-1},x_2^{k-1})\right)\right\rangle. \label{ivr_2}
\end{align}
By \eqref{ivr_1}, we see that
$$
\E_{k,p}\left(\tilde\nabla_{x_1}H(z_1^k,x_2^k)-\tilde\nabla_{x_1}H(z_1^{k-1},x_2^{k-1})\right)=\nabla_{x_1}H(z_1^k,x_2^k)-\nabla_{x_1} H(z_1^{k-1},x_2^{k-1}).
$$
Thus, the first two inner products in \eqref{ivr_2} sum to zero and the third one is equal to
\begin{align}
&2\left\langle\nabla_{x_1}H(z_1^{k-1},x_2^{k-1})-\nabla_{x_1}H(z_1^k,x_2^k),\E_{k,p}\left(\tilde\nabla_{x_1}H(z_1^k,x_2^k)-\tilde\nabla_{x_1}H(z_1^{k-1},x_2^{k-1})\right)\right\rangle\\
&= 2\left\langle\nabla_{x_1}H(z_1^{k-1},x_2^{k-1})-\nabla_{x_1}H(z_1^k,x_2^k),\nabla_{x_1}H(z_1^k,x_2^k)-\nabla_{x_1}H(z_1^{k-1},x_2^{k-1})\right\rangle\\
&=-2\|\nabla_{x_1} H(z_1^k,x_2^k)-\nabla_{x_1}H(z_1^{k-1},x_2^{k-1})\|^2.
\end{align}
This yields 
\begin{align}
&\E_{k,p}\left(\|\tilde\nabla_{x_1}H(z_1^k,x_2^k)-\nabla_{x_1}H(z_1^k,x_2^k)\|^2\right)\\
&\leq\|\tilde\nabla_{x_1}H(z_1^{k-1},x_2^{k-1})-\nabla_{x_1}H(z_1^{k-1},x_2^{k-1})\|^2-\|\nabla_{x_1}H(z_1^{k-1},x_2^{k-1})-\nabla_{x_1}H(z_1^k,x_2^k)\|^2\\
&+\E_{k,p}\left(\|\tilde\nabla_{x_1}H(z_1^k,x_2^k)-\tilde\nabla_{x_1}H(z_1^{k-1},x_2^{k-1})\|^2\right)\\
&\leq
\|\tilde\nabla_{x_1}H(z_1^{k-1},x_2^{k-1})\!-\!\nabla_{x_1}H(z_1^{k-1},x_2^{k-1})\|^2
+
\E_{k,p}\left(\|\tilde\nabla_{x_1}H(z_1^k,x_2^k)\!-\!\tilde\nabla_{x_1}H(z_1^{k-1},x_2^{k-1})\|^2\right).
\end{align}
Since the function $x\mapsto\|x\|^2$ is convex, the second summand fulfills
{\small
\begin{align}
&\E_{k,p}\left(\|\tilde\nabla_{x_1}H(z_1^k,x_2^k)-\tilde\nabla_{x_1}H(z_1^{k-1},x_2^{k-1})\|^2\right)\\
&=\E_{k,p}\Big(\Big\|\tfrac1b \Big(\sum_{j\in B_1^k}\nabla_{x_1}h_j(z_1^k,x_2^k)-\nabla_{x_1}h_j(z_1^{k-1},x_2^{k-1})\Big)\Big\|^2\Big)\\
&\leq\tfrac1n\sum_{j=1}^n \|\nabla h_j(z_1^k,x_2^k)-\nabla_{x_1}h_j(z_1^{k-1},x_2^{k-1})\|^2,
\end{align}
}
so that we obtain
\begin{align}
\E_{k,p}\left(\|\tilde\nabla_{x_1}H(z_1^k,x_2^k)-\nabla_{x_1}H(z_1^k,x_2^k)\|^2\right)
&\leq
\|\tilde\nabla_{x_1}H(z_1^{k-1},x_2^{k-1})-\nabla_{x_1}H(z_1^{k-1},x_2^{k-1})\|^2\\
&+ \tfrac1n\sum_{j=1}^n\|\nabla_{x_1}h_j(z_1^k,x_2^k)-\nabla_{x_1}h_j(z_1^{k-1},x_2^{k-1})\|^2.
\end{align}
Since the conditional expectation $\E_k$  
of $\|\tilde\nabla_{x_1}H(z_1^k,x_2^k)-\nabla_{x_1}H(z_1^k,x_2^k)\|^2$ conditioned on the event 
that the full gradient is computed in \eqref{sarah} is zero, 
and taking the $M$-Lipschitz continuity of the gradients of the $h_j$ into account,
we get
\begin{align}
&\E_{k}\left(\|\tilde\nabla_{x_1}H(z_1^k,x_2^k)-\nabla_{x_1}H(z_1^k,x_2^k)\|^2\right)\\
&\leq(1-\tfrac1p)\Big(\|\tilde\nabla_{x_1}H(z_1^{k-1},x_2^{k-1})-\nabla_{x_1}H(z_1^{k-1},x_2^{k-1})\|^2\\
&+\tfrac1n\sum_{j=1}^n \|\nabla_{x_1}h_j(z_1^k,x_2^k)-\nabla_{x_1}h_j(z_1^{k-1},x_2^{k-1})\|^2\Big)\\
&\leq  
(1-\tfrac1p) \left(\|\tilde\nabla_{x_1}H(z_1^{k-1},x_2^{k-1})-\nabla_{x_1}H(z_1^{k-1},x_2^{k-1})\|^2+M^2\|(z_1^k,x_2^k)-(z_1^{k-1},x_2^{k-1})\|^2\right).
\end{align}
By symmetric arguments, it holds 
\begin{align}
&\E_{k}\left(\|\tilde\nabla_{x_2}H(x_1^{k+1},z_2^k)-\nabla_{x_2}H(x_1^{k+1},z_2^k)\|^2\right)\\
\leq& (1-\tfrac1p) \left(\|\tilde\nabla_{x_2}H(x_1^{k},z_2^{k-1})-\nabla_{x_2}H(x_1^{k},z_2^{k-1})\|^2+M^2\E_k(\|(x_1^{k+1},z_2^k)
-(x_1^{k},z_2^{k-1})\|^2) \right).
\end{align}
Using $(a+b+c)^2 \le 3(a^2+b^2+c^2)$ and Lemma \ref{prop:deltas}, we can estimate
\begin{align}
&\|(z_1^k,x_2^k)-(z_1^{k-1},x_2^{k-1})\|^2 \,  = \, \|z_1^k-z_1^{k-1}\|^2+\|x_2^k-x_2^{k-1}\|^2\\
&\leq
3\|z_1^k-x_1^k\|^2+3\|x_1^k-x_1^{k-1}\|^2+3\|x_1^{k-1}-z_1^{k-1}\|^2+\|x_2^k-x_2^{k-1}\|^2\\
&\leq
3(1+( \beta_1^k)^2)\|x_1^k-x_1^{k-1}\|^2+3(\beta_1^{k-1})^2\|x_1^{k-1}-x_1^{k-2}\|^2+\|x_2^k-x_2^{k-1}\|^2.
\end{align}
Further, we have 
\begin{align}
&\E_k(\|(x_1^{k+1},z_2^k)-(x_1^{k},z_2^{k-1})\|^2) \,  = \,
\E_k(\|x_1^{k+1}-x_1^k\|^2)+\|z_2^k-z_2^{k-1}\|^2\\
&\leq \E_k(\|x_1^{k+1}-x_1^k\|^2)+3\|z_2^k-x_2^k\|^2+3\|x_2^k-x_2^{k-1}\|^2+3\|x_2^{k-1}-z_2^{k-1}\|^2\\
&\leq \E_k(\|x_1^{k+1}-x_1^k\|^2)+3(1+( \beta_2^k)^2)\|x_2^k-x_2^{k-1}\|^2+3(\beta_2^{k-1})^2\|x_2^{k-1}-x_2^{k-2}\|^2.
\end{align}
Altogether we obtain for
$$
\Upsilon_{k+1}\coloneqq \|\tilde\nabla_{x_1}H(z_1^k,x_2^k)-\nabla_{x_1}H(z_1^k,x_2^k)\|^2
+\|\tilde\nabla_{x_2}H(x_1^{k+1},z_2^k)-\nabla_{x_2}H(x_1^{k+1},z_2^k)\|^2.
$$
that
\begin{align}
&\E_k(\Upsilon_{k+1}) =\E_{k}\left(\|\tilde\nabla_{x_1}H(z_1^k,x_2^k)
-\nabla_{x_1}H(z_1^k,x_2^k)\|^2+\|\tilde\nabla_{x_2}H(x_1^{k+1},z_2^k)-\nabla_{x_2}H(x_1^{k+1},z_2^k)\|^2\right)\\
&\leq 
(1-\tfrac1p)\Upsilon_k+ V_\Upsilon (\E_k(\|x^{k+1}-x^{k}\|^2)+\|x^k-x^{k-1}\|^2+\|x^{k-1}-x^{k-2}\|^2),\label{eq:in_var_red_ineq}
\end{align}
where $V_\Upsilon = 3(1-\tfrac1p)M^2 \left( 1+\max( (\bar\beta_1)^2,(\bar \beta_2)^2 ) \right)$.
This proves the properties (i) and (ii) of Definition \ref{def:ivr}.
Taking the full expectation in  \eqref{eq:in_var_red_ineq} and iterating, we  
\begin{align}
\E(\Upsilon_k) 
&\leq (1-\tfrac1p)^{k-1} \E(\Upsilon_1) \\
&+
V_\Upsilon 
\sum_{l=1}^{k-1} (1-\tfrac1p)^{k-l-1}\E\left(\|x^{l+1}-x^{l}\|^2+\|x^{l}-x^{l-1}\|^2+\|x^{l-1}-x^{l-2}\|^2\right).
\end{align}
We want to show that $\E(\Upsilon_k)\to 0$ as $k\to\infty$, if $\E(\|x^{k}-x^{k-1}\|^2)\to0$ as $k\to\infty$.
Since the first summand converges to zero for $k$ large enough, 
it remains to prove that for an arbitrary $\epsilon>0$, 
there exists some $k_0\in\N$ such that for all $k\geq k_0$ the sum becomes not larger than $\epsilon$
Recall that $\sum_{l=0}^\infty (1-\tfrac1p)^l=p$. 
Now we choose $k_1\in\N$ such that for all $k\geq k_1$ we have 
$\E(\|x^{k+1}-x^{k}\|^2)<\tfrac\epsilon{2 p}$. 
Further we define $k_2\in\N$ such that $(1-\tfrac1p)^{k_2}<\tfrac{\epsilon}{6S p}$, 
where $S \coloneqq \max_{k\in\N}\E(\|x^{k}-x^{k-1}\|^2)$. 
Then the above sum can be estimated for $k\geq k_0\coloneqq k_1+k_2$ as
\begin{align}
&\sum_{l=1}^{k-1} (1-\tfrac1p)^{k-l-1}\E\left(\|x^{l+1}-x^l\|^2+\|x^l-x^{l-1}\|^2+\|x^{l-1}-x^{l-2}\|^2\right)\\
&= \sum_{l=1}^{k_1}(1-\tfrac1p)^{k-l-1}\E\left(\|x^{l+1}-x^l\|^2+\|x^l-x^{l-1}\|^2+\|x^{l-1}-x^{l-2}\|^2\right)\\
&+\sum_{l=k_1+1}^{k-1}(1-\tfrac1p)^{k-l-1}\E\left(\|x^{l+1}-x^l\|^2+\|x^l-x^{l-1}\|^2+\|x^{l-1}-x^{l-2}\|^2\right)\\
&\leq (1-\tfrac1p)^{k-k_1}\sum_{l=1}^{k_1}(1-\tfrac1p)^{k_1-l}\E\left(\|x^{l+1}-x^l\|^2+\|x^l-x^{l-1}\|^2+\|x^{l-1}-x^{l-2}\|^2\right)\\
&+\sum_{l=k_1+1}^{k}(1-\tfrac1p)^{k-l-1}\tfrac\epsilon{2p} 
\leq \epsilon,
\end{align}
and we are done.

\section{Derivatives of the Likelihood of Student-$t$ MMs}\label{sec:derivatives_iPALM}
In this section, we compute the outer derivatives of the objective function in \eqref{eq:unconstrainted_problem}
which we need in the numerical computations and in the proof of Lemma \ref{lem:Lipschitz_Likelihood}. 
In \cite{HHLS2019}, the derivatives of $g(\nu,\mu,\Sigma) \coloneqq \log(f(x|\nu,\mu,\Sigma)$ 
were computed as follows:
\begin{align}
\frac{\partial g}{\partial \mu}(\nu,\mu,\Sigma) 
& = \frac{d+\nu}{\nu + s}  \,\Sigma^{-1}(x-\mu), \label{der_1}
\\
\frac{\partial g}{\partial \Sigma}(\nu,\mu,\Sigma)	
& = \frac12 \left( \frac{d+\nu}{\nu + s}  \, \Sigma^{-1}(x-\mu)(x-\mu)^\tT \Sigma^{-1} - \Sigma^{-1} \right),\label{der_2}
\\
\frac{\partial g}{\partial \nu}(\nu,\mu,\Sigma ) 
& = \frac12
\left(
\psi \left(\frac{\nu + d}{2}\right) - \psi \left(\frac{\nu}{2}\right) 
-  
\frac{d-s }{\nu +  s}
- 
\log\left( 1 + \frac{s}{\nu} \right),
\right)\label{der_3}
\end{align}
where $s \coloneqq (x-\mu)^\tT \Sigma^{-1} (x-\mu)$ and $\Psi$ is the
the \emph{digamma function} defined by
$$
\psi(x) \coloneqq \frac{\mathrm{d}}{\mathrm{d}x}\log\left(\Gamma(x)\right) = \frac{\Gamma'(x)}{\Gamma(x)}.
$$
We use the abbreviations
$$
f_{i,k} \coloneqq f(x_i|\nu_k,\mu_k,\Sigma_k),\quad
\gamma_i \coloneqq \Big( \sum_{k=1}^K \alpha_k f_{i,k} \Big)^{-1}, \quad
s_{i,k} \coloneqq (x_i-\mu_k)^T\Sigma_k^{-1}(x_i-\mu_k). 
$$
Then we obtain for 
$$
\mathcal L(\alpha,\nu,\mu,\Sigma|\mathcal X)=-\sum_{i=1}^n\log \bigg(\sum_{k=1}^K\alpha_k f_{i,k}\bigg)
$$
that the derivative with respect to $\alpha$ is given by
\begin{align} \label{eq:der_alpha}
\frac{\partial \mathcal L(\alpha,\nu,\mu,\Sigma|\mathcal X)}{\partial \alpha_l}
=-\sum_{i=1}^n\gamma_i f_{i,k}
\end{align}
Using that for $g= \log f$, the relation $g' f = f'$ holds true,
the derivatives with respect to $\mu_l,\Sigma_l$ and $\nu_l$ have the form
\begin{align}
\frac{\partial \mathcal L(\alpha,\nu,\mu,\Sigma|\mathcal X)}{\partial \bullet_l}
= -\sum_{i=1}^n \gamma_i \alpha_l f_{i,l} \frac{\partial f(\alpha,\nu,\mu,\Sigma|\mathcal X)}{\partial \bullet_l}.
\end{align}
Together with \eqref{der_1} - \eqref{der_3}, we obtain
\begin{align}
\nabla_{\mu_l} \mathcal L(\alpha,\nu,\mu,\Sigma|\mathcal X)
&=\sum_{i=1}^n\gamma_i \alpha_l f_{i,l} \frac{d+\nu_l}{\nu_l+s_{i,l}} \, \Sigma_l^{-1}(\mu_l-x_i),\label{eq:der_mu}
\\
\nabla_{\Sigma_l}\mathcal L(\alpha,\nu,\mu,\Sigma|\mathcal X)
&=
\frac12\sum_{i=1}^n\gamma_i\alpha_l f_{i,l}
\bigg(\Sigma_l^{-1}-\frac{d+\nu_l}{\nu_l+s_{i,l}}\, \Sigma^{-1}(x_i-\mu_l)(x_i-\mu_l)^T\Sigma^{-1}\bigg),\label{eq:der_sigma}
\\
\frac{\partial \mathcal L(\alpha,\nu,\mu,\Sigma|\mathcal X)}{\partial \nu_l}
&=
\frac12\sum_{i=1}^n\gamma _i\alpha_l f_{i,l}
\left( \psi \left(\frac{\nu_l}{2}\right)
-\psi \left(\frac{\nu_l + d}{2}\right) +
\frac{d-s_{i,l} }{\nu_l +  s_{i,l}}
+ 
\log\left( 1+ \frac{s_{i,l}}{\nu_l} \right)
\right).\label{eq:der_nu}
\end{align}

\section{Proof of Lemma \ref{lem:Lipschitz_Likelihood} }\label{sec:proof}
Since the sum of Lipschitz continuous functions is Lipschitz continuous, 
it is sufficient to show the claim for the summands of $H$ in \eqref{eq:unconstrainted_problem}.
Hence we consider only
$$h(\alpha,\nu,\mu,\Sigma)
\coloneqq
\log \Big(\sum_{k=1}^K \tilde \alpha_k f_k \Big), 
\qquad f_k \coloneqq f(x|\tilde\nu_k,\tilde\mu_k,\tilde\Sigma_k),
$$
where 
$\tilde \alpha=\varphi_1(\alpha)$, 
$\tilde\nu = \varphi_2 (\nu)$, 
$\tilde\Sigma=\varphi_3 (\Sigma)$ are given by \eqref{trafo}.
Set
$$
\gamma \coloneqq \left(\sum_{k=1}^K \tilde \alpha_k f_{k}\right)^{-1},\quad
s_{k} \coloneqq (x-\tilde \mu_k)^T \tilde \Sigma_k^{-1}(x-\tilde \mu_k). 
$$


1. By \eqref{eq:der_alpha} we obtain
\begin{align}
\frac{\partial h(\alpha,\nu,\mu,\Sigma)}{\partial\alpha_l}
=
\frac{\exp(\alpha_l)f_{l}}{\sum_{k=1}^K \exp(\alpha_k)f_{k}} - \frac{\exp(\alpha_l)}{\sum_{k=1}^K \exp(\alpha_k)}
\end{align}
and further for the Hessian of $h$ with respect to $\alpha$,
\begin{align}
\frac{\partial h(\alpha,\nu,\mu,\Sigma)}{\partial \alpha_l \partial\alpha_j}
&=
\delta_{j,l} \left( 
\frac{\exp(\alpha_l) f_{l}}{\sum_{k=1}^K\exp(\alpha_k)f_{k}} 
-
\frac{\exp(\alpha_l)}{\sum_{k=1}^K \exp(\alpha_k)}
\right)\\
&- 
\frac{\exp(\alpha_l) \exp(\alpha_j) f_{l} f_{j} }{\Big(\sum_{k=1}^K \exp(\alpha_k)f_{k} \Big)^2} 
+ \frac{\exp(\alpha_l) \exp(\alpha_j)}{\Big(\sum_{k=1}^K \exp(\alpha_k) \Big)^2}  .
\end{align}
This is bounded so that $\nabla_\alpha h(\cdot,\nu,\mu,\Sigma)$ is globally Lipschitz continuous.

2. Using \eqref{der_3}, we get
\begin{align}
\frac{\partial}{\partial \nu_l} h(\alpha,\nu,\mu,\Sigma)
= \tilde \alpha_l \underbrace{\gamma f_{l}}_{g_1}
\underbrace{\Big(
\psi\left( \psi\left(\frac{d+\tilde \nu_l}{2}\right) - \frac{\tilde \nu_l}{2}\right)
-
\frac{d-s_l}{\tilde \nu_l +  s_l}
-
\log\left( 1 + \frac{s_l}{\tilde \nu_l} \right)
\Big)
\, \nu_l}_{g_2}.
\end{align}
We show that the functions $g_i$, $i=1,2$ are Lipschitz continuous and bounded.
This implies that $\frac{\partial}{\partial \nu_l} h(\alpha,\cdot,\mu,\Sigma)$ is Lipschitz continuous.
It holds 
\begin{align}\label{gut}
|g_2(\nu_l)|
\leq
|\nu_l|\, |\psi\left(\frac{d+\tilde \nu_l}{2} \right)-\psi\left(\frac{\tilde\nu_l}{2}\right)|
+
|\nu_l| \left|\frac{d-s_l}{\tilde \nu_l +  s_l} \right|
+
\log  \left(1+ \frac{s_l}{\tilde \nu_l} \right)^{|\nu_l|}.
\end{align}
Using the summation formula 
\begin{equation} \label{psi_1}
\psi(x+1)=\psi(x)+\tfrac1x
\end{equation} 
and the fact that the digamma function is monotone increasing we conclude
\begin{align}\label{eq:abs_g_nu}
|g_2(\nu)|
&\le
\sum_{r= 1}^{\lceil\tfrac{d}2\rceil} 
\frac{|\nu_l|}{\tfrac{d+\tilde\nu_l}{2} -r}
+
|\nu_l|\Big|\frac{d-s_l}{\tilde \nu_l +  s_l}\Big|
+
\log \left(1+ \frac{s_l}{\tilde \nu_l} \right)^{|\nu_l|}\\
&\le
\sum_{r=1}^{\lceil\tfrac{d}2\rceil} 
\frac{|\nu_l|}{\tfrac{d+ \nu_l^2 + \epsilon}{2}-r}
+
|\nu_l|\Big|\frac{d-s_l}{\nu_l^2 + \epsilon +  s_l}\Big|
+
\log \left(1+ \frac{s_l}{\nu_l^2} \right)^{|\nu_l|}.
\end{align}
Since
\begin{align}
\lim_{\nu_l\to\pm\infty}
\log (1+\tfrac1{\nu^2}s_l)^{|\nu|}
\leq
\lim_{\nu_l\to\pm\infty}\log \left(1+\tfrac1{\nu^2}s_l \right)^{\nu^2}
=s_l
\end{align}
and $g_2$ is continuous we conclude that $g_2$ is bounded. 
Further, it holds 
\begin{align}
g_2'(\nu_l)
&=
\psi\left( \frac{d+\tilde \nu_l}{2}\right) - \psi\left(\frac{\tilde \nu_l}{2}\right)
- 
\frac{d-s_l}{\tilde \nu_l +  s_l} - \log\left(1+\ \frac{s_l}{\tilde \nu_l} \right)
\\
&+
\nu_l^2 \left( \psi'\left( \frac{d+\tilde \nu_l}{2} \right) - \psi'\left(\frac{\tilde \nu_l}{2} \right)  \right) 
+
\frac{2(d-s_l)\nu_l}{(\tilde \nu_l + s_l)^2} + \frac{2s_l \nu_l}{(\tilde \nu_l^2 + s_l)^2 + s_l (\tilde \nu_l^2 + s_l)}.
\end{align}
Using again \eqref{psi_1}  and 
$\psi'(x+1)=\psi'(x)+\tfrac1{x^2}$ as well as 
the fact that the digamma function and its derivatives are monotone increasing we get, 
\begin{align}
\lim_{\nu_l\to\pm\infty}
|g_2'(\nu_l)|
&\le
\lim_{\nu_l\to\pm\infty} \Big( \sum_{r=0}^{\lceil\tfrac{d}2\rceil} 
\frac{1}{\tfrac{\tilde \nu_l}{2}+r}
+
\sum_{r=0}^{\lceil\tfrac{d}2\rceil} 
\frac{|\nu_l|^2}{(\tfrac{\tilde \nu_l}{2}+r)^2} \Big) = 0.
\end{align}
Since $g_2'(\nu)$ is continuous, this yields that it is bounded which implies that 
$g_2$ is Lipschitz continuous.

The function $g_1$ is obviously bounded.
Further we obtain for $j \not = l$ that
\begin{align}
\nabla_{\nu_j}g_1(\nu)
= 
-2 \tilde \alpha_j f_{l}\gamma^2 \frac{\partial f_j}{\partial \nu_j} 
=
-2 g_2 (\nu_j) \tilde \alpha_j f_{l} f_{j} \gamma^2 
\end{align}
so that
$$
|\nabla_{\nu_j}g_1(\nu_l)| = 2 | g_2 (\nu_j)|\,  |\tilde \alpha_j f_{l} f_{j} \gamma^2|.
$$
This expression is bounded. Similarly, we get for $j=l$ that
\begin{align}
|\nabla_{\nu_l}g_1(\nu_l)| = 
2 | g_2 (\nu_l)| \,  |\tilde \alpha_l f_{l}^2 \gamma^2 + \gamma f_l |.
\end{align}
Thus, $g_1$ is Lipschitz continuous. 

3. 
By  \eqref{eq:der_mu} we obtain 
\begin{align}
\nabla_{\mu_l}h(\alpha,\nu,\mu,\Sigma)
= \tilde \alpha_l \underbrace{\gamma f_{l}}_{g_1} \underbrace{\frac{d+\tilde \nu_l}{\tilde \nu_l+s_l}\, \tilde \Sigma_l^{-1}(x-\mu_l)}_{g_2}.
\end{align}
As in the second part of the proof, it suffices to show that $g_i$, $i=1,2$
are bounded and Lipschitz continuous. 
Calculating the Jacobian
\begin{align}
\nabla_{\mu_l} g_2(\mu_l)
=
\frac{d+\tilde \nu_l}{(\tilde \nu_l + s_l)^2}
\tilde \Sigma_l^{-1}(\mu_l-x)(\mu_l-x)^T \tilde \Sigma_l^{-1}
+\frac{d+\tilde \nu_l}{\tilde \nu_l +s_l}\tilde \Sigma_l^{-1}
\end{align}
and taking the Frobenius norm $\| \cdot\|_F$, we obtain
\begin{align}
\| \nabla_{\mu_l} g_2(\mu_l)\|_F 
&\leq
\frac{d+\tilde \nu_l}{(\tilde \nu_l+s_l)^2}
\|\tilde \Sigma_l^{-1} (\mu_l-x)\|^2  +\mathrm{const}\\
&\leq
\frac{d+\tilde \nu_l}{\left(\frac{\tilde \nu_l}{\|\tilde\Sigma_l^{-\frac12}(\mu-x)\|}
+
\|\tilde\Sigma_l^{-\frac12}(\mu-x)\|\right)^2} \|\tilde\Sigma_l^{-1}\|_F +\mathrm{const}\\
&\leq
\frac{d+\tilde \nu_l}{(2\min(1,\tilde \nu_l))^2}\|\tilde\Sigma_l^{-1}\|_F +\mathrm{const}.
\end{align}
Thus $g_2$ is Lipschitz continuous. 
Since
\begin{align}
\|g_2(\mu_l)\|
&\leq
\frac{d+\tilde \nu_l}{\tilde \nu_l+s_l}\|\tilde \Sigma_l^{-\frac12}\|_F \|\tilde \Sigma_l^{-\frac12}(\mu_l-x)\|\\
&=
\frac{d+\tilde \nu_l}{\frac{\tilde \nu_l}{\|\tilde \Sigma_l^{-\frac12}(\mu_l-x)\|}
+
\|\tilde \Sigma_l^{-\frac12}(\mu_l-x)\|}\|\tilde \Sigma_l^{-\frac12}\|
\leq
\frac{d+\tilde \nu_l}{2\min(1,\tilde \nu_l)}\|\tilde \Sigma_l^{-\frac12}\|,
\end{align}
 $g_2$ is bounded.

The function $g_1$ is obviously bounded.
Further, it holds
\begin{align}
\|\nabla_{\mu_j}g_1(\mu)\| &=
\left\{
\begin{array}{ll}
\|g_2(\mu_l)\| \, |\gamma^2 f_{l} f_j \tilde \alpha_j | &\mathrm{if} \; j\neq l\\
 \|g_2(\mu_l)\| \, |\gamma^2 f_{l}^2  \tilde \alpha_l + \gamma f_l|  &\mathrm{if} \; j= l,
\end{array}
\right.
\end{align}
so that $g_1$ is Lipschitz continuous.                                                  

4. At the end, we use \eqref{der_2} to compute
\begin{equation}
\nabla_{\Sigma_l}h (\alpha,\nu,\mu,\Sigma)
= 
\tilde \alpha_l \underbrace{\gamma f_{l}  }_{g_1}
\underbrace{ \left(\frac{d+\tilde \nu_l}{\tilde \nu_l+s_l} 
\tilde \Sigma_l^{-1}(x-\mu_l)(x-\mu_l)^T \tilde \Sigma_l^{-1}  - \tilde \Sigma_l^{-1} \right) \Sigma_l}_{g_2}.
\end{equation}
We show that $g_i$, $i=1,2$ are bounded and Lipschitz continuous. 
We have 
\begin{align}
g_2(\Sigma_l)
&=\underbrace{\bigg(\frac{d+\tilde \nu_l}{\tilde \nu_l+s_l} \tilde \Sigma_l^{-1}(x-\mu_l)(x-\mu_l)^T - I_d\bigg)}_{h_1(\Sigma_l)}
\underbrace{\tilde \Sigma_l^{-1}\Sigma_l}_{h_2(\Sigma_l)}
\end{align}
Obviously, $h_1$ is bounded. The second factor $h_2$ is bounded,
since with the spectral decomposition $\Sigma_l=PDP^T$ it holds
$$
\tilde \Sigma_l^{-1}\Sigma_l = (P(D^2+\epsilon I)P^T)^{-1}PDP^T=P(D^2+\epsilon I)^{-1}DP^T,
$$
so that the absolute value of the largest eigenvalue of $h_2(\Sigma_l)$ is smaller than $1$. 

To prove the Lipschitz continuity of $g_2$ we compute the directional derivative using the computation rules from \cite{PP08}:
\begin{align}
D_{\Sigma_l}(\tilde\Sigma_l^{-1})[H]
=
D_{\Sigma_l}((\Sigma_l^2+\epsilon I_d)^{-1})[H]
=
\tilde \Sigma_l^{-1} D_{\Sigma_l}(\Sigma_l^2+\epsilon I_d)\tilde \Sigma_l^{-1}
=2 \tilde \Sigma_l^{-1} \Sigma_l \tilde \Sigma_l^{-1}.\label{eq:der_sig_m1}
\end{align}
Then we obtain
\begin{align}
\|D_{\Sigma_l}h_2[H]\|_F
=
\|2\tilde \Sigma_l^{-1} \Sigma_l \tilde\Sigma_l^{-1}\Sigma_l + \tilde\Sigma_l^{-1}\|_F
\leq 
4\|h_2(\Sigma_l)\|_F^2+\|\tilde \Sigma_l^{-1}\|_F
\end{align}
which is bounded. Thus, $h_2$ is Lipschitz continuous.
 
To show, that $h_1$ is Lipschitz continuous, note that the mapping
$x\mapsto \frac{d+\nu}{\nu+x}$ has a bounded derivative and is Lipschitz continuous. 
Further, the mapping $A\mapsto (x-\mu) A (x-\mu)$ has a bounded derivative, if $A$ is bounded. 
Together with the fact that 
$\Sigma_l \mapsto \tilde \Sigma_l^{-1}$ has a bounded derivative by \eqref{eq:der_sig_m1} and is bounded, 
this yields that the mapping
\begin{align}
\Sigma_l \mapsto\frac{d+\tilde \nu_l}{\tilde \nu_l+(x-\mu_l)\tilde \Sigma_l^{-1}(x-\mu_l)} \leq \frac{d+\tilde \nu_l}{\tilde \nu_l}
\end{align}
is Lipschitz continuous as a concatenation of Lipschitz continuous functions. 
Since also $\Sigma_l \mapsto \tilde \Sigma_l^{-1}$ is Lipschitz continuous by \eqref{eq:der_sig_m1} and bounded, 
we get that $h_1$ is Lipschitz continuous. Now, $h_1$ and $h_2$ are Lipschitz continuous and bounded. 
Thus also $g_2 = h_1 h_2$ is Lipschitz continuous and bounded.

Finally, the function $g_1$ maps into the interval $[0,1]$ and is Lipschitz continuous by the same arguments as in the second part of the proof.
$\Box$

\section*{Acknowledgment}
The authors want to thank T. Pock (TU Graz) for fruitful discussions on iPALM.
\\
 Funding by the German Research Foundation (DFG) with\-in the project STE 571/16-1 is gratefully acknowledged.

\bibliographystyle{abbrv}
\bibliography{Student_t}
\end{document}